\newcommand{\mypar}[1]{\vspace{0.03in}\noindent{\bf #1.}}
\newtheorem{theorem}{Theorem}[section]
\newtheorem{lemma}[theorem]{Lemma}
\newtheorem{definition}[theorem]{Definition}
\newtheorem{corollary}[theorem]{Corollary}
\newtheorem{proposition}[theorem]{Proposition}
\newtheorem{remark}[theorem]{Remark}
\renewcommand{\baselinestretch}{1.3}
\newcommand{\sbb}{\mbox{\scriptsize{sb}}}
\def\ln{{\rm ln}}
\title{Moderate Deviations of the Random Riccati Equation}
\author{Soummya Kar and Jos\'e M.~F.~Moura$^{*}$
\thanks{The authors are with the Dept.~Electrical and Computer Engineering, Carnegie Mellon University, Pittsburgh, PA 15213, USA (e-mail:
soummyak@andrew.cmu.edu, moura@ece.cmu.edu, ph: (412)268-6341, fax: (412)268-3890.)}
\thanks{Work supported by NSF under grants \#~ECS-0225449
and~\#~CNS-0428404.}}
\begin{document}
\maketitle \thispagestyle{empty} \maketitle
\vspace*{-1.5cm}
\begin{abstract}We characterize the invariant filtering measures resulting from Kalman filtering with intermittent observations (\cite{Bruno}), where the observation arrival is modeled as a Bernoulli process. In~\cite{Riccati-weakconv}, it was shown that there exists a $\overline{\gamma}^{\mbox{\scriptsize{sb}}}>0$ such that for every observation packet arrival probability $\overline{\gamma}$,  $\overline{\gamma}>\overline{\gamma}^{\mbox{\scriptsize{sb}}}>0$, the sequence of random conditional error covariance matrices converges in distribution to a unique invariant distribution $\mathbb{\mu}^{\overline{\gamma}}$ (independent of the filter initialization.) In this paper, we prove that, for controllable and observable systems, $\overline{\gamma}^{\mbox{\scriptsize{sb}}}=0$ and that, as $\overline{\gamma}\uparrow 1$, the family $\{\mathbb{\mu}^{\overline{\gamma}}\}_{\overline{\gamma}>0}$ of invariant distributions satisfies a moderate deviations principle (MDP) with a good rate function $I$. The rate function $I$ is explicitly identified. In particular, our results show: \begin{inparaenum}[(1)] \item as $\overline{\gamma}\uparrow 1$, the family $\{\mathbb{\mu}^{\overline{\gamma}}\}$ converges weakly (in distribution) to the Dirac measure $\delta_{P^{\ast}}$, where $P^{\ast}$ is the fixed point of the discrete time Riccati operator; \item the probability of a rare event (an event bounded away from $P^{\ast}$) under $\mathbb{\mu}^{\overline{\gamma}}$ decays to zero as a power law of $(1-\overline{\gamma})$ as $\overline{\gamma}\uparrow 1$. The best exponent of such a power law decay is explicitly obtained by solving a deterministic variational problem involving the MDP rate function $I$. These results offer a complete characterization of the family of invariant distributions $\{\mathbb{\mu}^{\overline{\gamma}}\}_{\overline{\gamma}>0}$.\end{inparaenum}  We provide computationally efficient methods for solving the variational problems in question, leading to efficient estimates of probabilities under the invariant measures. The analytical techniques developed in this paper are fairly general and applicable to the analysis of a broader class of iterated function systems. Several intermediate results obtained in the process are of independent interest.
\end{abstract}
\newenvironment{s_itemize}{\begin{list}{$\bullet$}
{\setlength{\rightmargin}{\leftmargin}
\setlength{\itemsep}{0em}
\setlength{\topsep}{0em}
\setlength{\parsep}{0em}}}{\end{list}}

\newenvironment{s_itemize_2}{\begin{list}{$-$}
{\setlength{\rightmargin}{0em}
\setlength{\itemsep}{0em}
\setlength{\topsep}{0em}
\setlength{\parsep}{0em}}}{\end{list}}

\newenvironment{noinds_itemize}{\begin{list}{$\bullet$}
{\setlength{\rightmargin}{0em}
\setlength{\leftmargin}{1.2em}
\setlength{\itemsep}{0em}
\setlength{\topsep}{0em}
\setlength{\parsep}{0em}}}{\end{list}}

\newenvironment{small_ind_itemize}{\begin{list}{$\bullet$}
{\setlength{\rightmargin}{1em}
\setlength{\leftmargin}{2em}
\setlength{\itemsep}{0em}
\setlength{\topsep}{0em}
\setlength{\parsep}{0em}}}{\end{list}}

\def\descriptionnobflabel#1{\hspace\labelsep #1}
\def\descriptionnobf{\list{}{\labelwidth\z@ \itemindent-\leftmargin
 \let\makelabel\descriptionnobflabel}}
\let\enddescriptionnobf\endlist

\def\s_descriptionlabel#1{\hspace\labelsep \bf #1}
\newenvironment{s_description}{\begin{list}{1}
{\setlength{\rightmargin}{\leftmargin}
\setlength{\itemsep}{0em}
\setlength{\topsep}{0em}
\setlength{\parsep}{0em}
\renewcommand{\makelabel}{\s_descriptionlabel}}}{\end{list}}

\def\s_descriptionlabel#1{\hspace\labelsep \bf #1}
\newenvironment{s_description_nomargin}{\begin{list}{1}
{\setlength{\rightmargin}{0em}
\setlength{\itemsep}{0em}
\setlength{\topsep}{0em}
\setlength{\parsep}{0em}
\renewcommand{\makelabel}{\s_descriptionlabel}}}{\end{list}}

\newenvironment{s_description_nobf}{\begin{list}{1}
{\setlength{\rightmargin}{\leftmargin}
\setlength{\itemsep}{0em}
\setlength{\topsep}{0em}
\setlength{\itemindent}{1em}
\setlength{\parsep}{0em}}}{\end{list}}

\newcommand{\bind}[1]{\hspace*{#1}\begin{minipage}[t]{7in}\begin{itemize}}
\newcommand{\eind}{\end{itemize}\end{minipage}\\}
\newenvironment{inditem}
{\hspace*{1cm}\begin{minipage}[t]{6in}\begin{itemize}}
{\end{itemize}\end{minipage}}

\newenvironment{inditemize}{\begin{list}{$\bullet$}
{\setlength{\rightmargin}{\leftmargin}
\setlength{\itemsep}{\itemsep}
\setlength{\topsep}{\topsep}
\setlength{\itemindent}{1em}
\setlength{\parsep}{\parsep}}}{\end{list}}

\newenvironment{dinditemize}{\begin{list}{$-$}
{\setlength{\rightmargin}{\leftmargin}
\setlength{\itemsep}{\itemsep}
\setlength{\topsep}{\topsep}
\setlength{\itemindent}{1em}
\setlength{\parsep}{\parsep}}}{\end{list}}

\newenvironment{nobulletinditemize}{\begin{list}{}
{\setlength{\rightmargin}{\leftmargin}
\setlength{\itemsep}{\itemsep}
\setlength{\topsep}{\topsep}
\setlength{\itemindent}{0em}
\setlength{\parsep}{\parsep}}}{\end{list}}

\newenvironment{nobullets_itemize}{\begin{list}{}
{\setlength{\rightmargin}{0em}
\setlength{\leftmargin}{0.7em}
\setlength{\itemsep}{0.5em}
\setlength{\topsep}{0.5em}
\setlength{\parsep}{\parsep}}}{\end{list}}

\newenvironment{s_itemize_sp}{\begin{list}{$\bullet$}
{\setlength{\rightmargin}{0em}
\setlength{\itemsep}{0em}
\setlength{\topsep}{0em}
\setlength{\parsep}{0em}}}{\end{list}}

\newcounter{ctr}
\newenvironment{indenumerate}{\begin{list}{\thectr.}
{\usecounter{ctr}
\setlength{\rightmargin}{\leftmargin}
\setlength{\itemsep}{\itemsep}
\setlength{\topsep}{\topsep}
\setlength{\itemindent}{1em}
\setlength{\parsep}{\parsep}}}{\end{list}}

\newenvironment{t_enumerate}{\begin{list}{\thectr.}
{\usecounter{ctr}
\setlength{\rightmargin}{\leftmargin}
\setlength{\itemsep}{0em}
\setlength{\topsep}{\topsep}
\setlength{\itemindent}{\itemindent}
\setlength{\parsep}{\parsep}}}{\end{list}}

\newenvironment{s_enumerate_noindent}{\begin{list}{\thectr.}
{\usecounter{ctr}
\setlength{\rightmargin}{0cm}
\setlength{\leftmargin}{0cm}
\setlength{\itemsep}{0em}
\setlength{\topsep}{0em}
\setlength{\itemindent}{0.5cm}
\setlength{\parsep}{0em}}}{\end{list}}

\newenvironment{s_enumerate}{\begin{list}{\thectr.}
{\usecounter{ctr}
\setlength{\rightmargin}{0.3cm}
\setlength{\leftmargin}{0.3cm}
\setlength{\itemsep}{0em}
\setlength{\topsep}{0em}
\setlength{\itemindent}{0.5cm}
\setlength{\parsep}{0em}}}{\end{list}}

\newenvironment{s_enumerate_ltr}{\begin{list}{\theenumii.}
{\usecounter{enumii}
\setlength{\rightmargin}{0.3cm}
\setlength{\leftmargin}{0.3cm}
\setlength{\itemsep}{0em}
\setlength{\topsep}{0em}
\setlength{\itemindent}{0.4cm}
\setlength{\parsep}{0em}}}{\end{list}}

\def\nolineslidehead#1{\newpage\begin{center}\begin{tabular*}{469pt}[t]{c}
\Huge{\parbox[c]{457pt}{\bf #1}} \\ \vspace{7pt} \\
\end{tabular*}\end{center}\yskip}

\def\centerslidehead#1{\newpage\begin{center}{\Huge{\bf #1}}\end{center}\yskip}

\def\slide#1{
 \newpage
 \begin{center}
 {\LARGE\bf {#1}}
 \end{center}
 \vspace{0in}}

\def\black{

\def\red{

\def\blue{

\def\green{

\def\royalblue{
\def\brickred{
\def\yellow{
\def\brown{
\def\orange{
\def\indigo{
\def\violet{
\def\white{
\def\lightgrey{
\def\darkgrey{
\def\pink{

\newenvironment{tightlist}{
    \begin{itemize}
    \itemsep=0pt \parskip=0pt \parsep=0pt }{
    \end{itemize}
}

\newcommand{\boxedfigure}[2]{\vspace{1ex}
                             \fbox{\begin{minipage}[t]{#1} #2 \end{minipage}}
                             \vspace{1ex}}

\newenvironment{s_itemize-2}{\begin{list}{$\rhd$}
{\setlength{\rightmargin}{0em}
\setlength{\itemsep}{0em}
\setlength{\topsep}{0.25cm}
\setlength{\parsep}{0em}}}{\end{list}}

\newenvironment{itemize-1}{\begin{list}{$\bullet$}
{\setlength{\rightmargin}{0em}
\setlength{\itemsep}{0.5cm}
\setlength{\topsep}{0.25cm}
\setlength{\parsep}{0em}}}{\end{list}}

\newenvironment{itemize-2}{\begin{list}{$\rhd$}
{\setlength{\rightmargin}{0em}
\setlength{\itemsep}{0.25cm}
\setlength{\topsep}{0.25cm}
\setlength{\parsep}{0em}}}{\end{list}}

\def\topslidehead#1{\newpage\begin{center}\vspace*{-3cm}\begin{tabular*}{469pt}[t]{c}
\Huge{\parbox[c]{457pt}{\bf #1}} \\ \vspace{7pt} \\
\end{tabular*}\end{center}\yskip}

\newenvironment{bullist}
    {\begin{list}{$\bullet$}
        {\parsep 0pt \itemsep 0pt \setlength{\rightmargin}{\leftmargin}}}%
    {\end{list}}
\newenvironment{dashlist}
    {\begin{list}{$-$}
        {\parsep 0pt \itemsep 0pt \setlength{\rightmargin}{\leftmargin}}}%
    {\end{list}}

\newenvironment{itemize-3}{\begin{list}{-}
{\setlength{\rightmargin}{0em}
\setlength{\itemsep}{0.25cm}
\setlength{\topsep}{0.25cm}
\setlength{\parsep}{0em}}}{\end{list}}

\newenvironment{itemize-4}{\begin{list}{\ }
{\setlength{\rightmargin}{0em}
\setlength{\itemsep}{0.25cm}
\setlength{\topsep}{0.25cm}
\setlength{\parsep}{0em}}}{\end{list}}

\newenvironment{short-itemize}{\begin{list}{$\bullet$}
{\setlength{\rightmargin}{0em}
\setlength{\itemsep}{0.10cm}
\setlength{\topsep}{0.10cm}
\setlength{\parsep}{0em}}}{\end{list}}

\font\sf=cmss10
\newcommand{\Nats}{{\hbox{\sf I\kern-.13em\hbox{N}}}}   
\newcommand{\Reals}{{\hbox{\sf I\kern-.14em\hbox{R}}}}  
\newcommand{\Ints}{{\hbox{\sf Z\kern-.43emZ}}}          
\newcommand{\CC}{{\hbox{\sf C\kern -.48emC}}}           
\newcommand{\QQ}{{\hbox{\sf C\kern -.48emQ}}}           

\newcommand{\lNats}{{\hbox{\sf {\large I\kern-.13em\hbox{N}}}}}   
\newcommand{\lReals}{{\hbox{\sf {\large I\kern-.14em\hbox{R}}}}}  
\newcommand{\lInts}{{\hbox{\sf {\large Z\kern-.43emZ}}}}          
\newcommand{\lCC}{{\hbox{\sf {\large C\kern -.48emC}}}}           
\newcommand{\lQQ}{{\hbox{\sf {\large C\kern -.48emQ}}}}           

\section{Introduction}
\label{introduction}

\subsection{Background and Motivation}
\label{backmot} Kalman filtering with non-classical information pattern has received significant attention in the control and signal processing literature. There has been renewed interest, motivated by increasing real-time networked systems applications. Such networks operate under constrained resources with lack of supervised control centers leading to inherent sources of randomness in the information pattern. For reliable system operation, it is of interest to understand the asymptotic properties of such systems like stability and ergodicity. In~\cite{Riccati-weakconv}, we studied this problem in the context of Kalman filtering with intermittent observations (\cite{Bruno}.) The results in~\cite{Riccati-weakconv} establish an interesting dichotomy for the filtering error process and show, in particular, that stochastic boundedness of the sequence of conditional error covariance matrices (generated by the discrete time random Riccati equation (RRE)) is necessary and sufficient for its ergodicity. In other words, we showed the existence of a critical probability, $\overline{\gamma}^{\mbox{\scriptsize{sb}}}$, such that, if the observation packet arrival probability $\overline{\gamma}$ is greater than $\overline{\gamma}^{\mbox{\scriptsize{sb}}}$, the sequence of random error covariance matrices converges weakly (in distribution) to a unique invariant distribution $\mathbb{\mu}^{\overline{\gamma}}$. We note here that stochastic boundedness is a much weaker condition than moment stability, and, as shown in~\cite{Riccati-weakconv}, convergence to a unique invariant distribution is possible under a packet arrival probability for which moment stability does not hold. In this context, we further note, that our work (\cite{Riccati-weakconv}) provides a sample-path analysis of the RRE, in contrast to moment stability analysis, as is done conventionally in the literature (see, for example,~\cite{ckvs89,wg99,Liu:04,Gupta:05,Xu:05,Minyi:07,kp-fb:07j,Craig:07,xx07} and also~\cite{Riccati-weakconv} for a detailed review of the literature.)

To summarize, the results in~\cite{Riccati-weakconv} showed the existence and uniqueness of an attracting invariant measure $\mathbb{\mu}^{\overline{\gamma}}$ for the RRE, for every $\overline{\gamma}>\overline{\gamma}^{\mbox{\scriptsize{sb}}}$, to which the conditional error covariance matrices converge weakly when operated at packet arrival probability $\overline{\gamma}$. In this paper, we prove that, for observable and controllable systems, $\overline{\gamma}^{\mbox{\scriptsize{sb}}}=0$, and hence $\mathbb{\mu}^{\overline{\gamma}}$ exists and is unique for every $\overline{\gamma}>0$ for such systems\footnote{The fact, that $\overline{\gamma}^{\mbox{\scriptsize{sb}}}=0$, was proved for systems with invertible observation matrices in~\cite{Riccati-weakconv}. The proof for general observable systems is provided in Appendix~\ref{proof_stoch_bound_er} of the present paper.}.
 The main goal of this paper is to undertake the highly nontrivial problem of characterizing the resulting invariant measures $\mathbb{\mu}^{\overline{\gamma}}$. In the non-classical information case, characterization of the steady-state error covariance distribution $\mathbb{\mu}^{\overline{\gamma}}$ is as important as characterizing the deterministic fixed point of the Riccati equation in the classical case, as derived by Kalman~\cite{kalman1960} for discrete time and by Kalman and Bucy~\cite{kalmanbucy1961} for continuous time.

We detail the key contributions of this paper. We show the following for observable and controllable systems.
\begin{s_enumerate_noindent}
\item \mypar{Stochastic boundedness: $\overline{\gamma}^{\mbox{\scriptsize{sb}}}=0$} We prove the result stated in Theorem~9 in~\cite{Riccati-weakconv}, by proving  that, for controllable and observable systems, $\overline{\gamma}^{\mbox{\scriptsize{sb}}}=0$, and so, for any non-zero observation arrival probability $\overline{\gamma}$, the conditional error covariance process is ergodic,  with the unique attracting measure $\mathbb{\mu}^{\overline{\gamma}}$.
\item \mypar{Moderate deviation principle (MDP)}  We show that the family of invariant distributions $\left\{\mathbb{\mu}^{\overline{\gamma}}\right\}_{\overline{\gamma}>0}$ satisfies a moderate deviations principle (MDP) with good rate function $I$ as $\overline{\gamma}\uparrow 1$. An immediate consequence (which is rather intuitive but not natural) is that, as $\overline{\gamma}\uparrow 1$, the family of invariant measures $\left\{\mathbb{\mu}^{\overline{\gamma}}\right\}$ converges weakly to the Dirac mass $\delta_{P^{\ast}}$, where $P^{\ast}$ is the unique fixed point of the deterministic Riccati equation.
\item \mypar{Probability of rare events} The MDP implies that the probabilities of `rare events' (events bounded away from $P^{\ast}$) decay to zero as $\overline{\gamma}\uparrow 1$. A natural question of practical and theoretical interest is the rate at which the probability of such a rare event goes to zero. We show that the probability of such rare events decays as a power law of $(1-\overline{\gamma})$ as $\overline{\gamma}\uparrow 1$.
\item \mypar{Best decay: Variational problem} The best exponent for the power law decay of the probability of rare events depends on the rare event of interest; it can be explicitly characterized in terms of the rate function $I(\cdot)$. Formally, we have the following MDP asymptotics:
\begin{equation}
\label{intro1}
\mathbb{\mu}^{\overline{\gamma}}(\Gamma)\sim (1-\overline{\gamma})^{\inf_{x\in\Gamma}I(x)}
\end{equation}
(this notation is made precise in the paper.) Thus, the exact decay asymptotics of a rare event is obtained by solving a variational problem involving the rate function $I$. Since, the above MDP asymptotics holds for every Borel set $\Gamma$, our result characterizes completely the family of invariant measures $\left\{\mathbb{\mu}^{\overline{\gamma}}\right\}$.
\item \mypar{Estimating the probability of rare events} The estimation of probabilities of rare events reduces to solving deterministic variational problems; this not only characterizes the decay rate of rare events but also gives insight into how such events occur. In Section~\ref{comp}, we show several techniques that can be employed to solve these variational problems efficiently. We emphasize that our analysis of reducing the problem of estimating probabilities of interest to solving variational problems efficiently is much more definitive and relevant than numerically estimating the invariant distributions. A naive numerical approach of simulating the distributions $\left\{\mathbb{\mu}^{\overline{\gamma}}\right\}$ as $\overline{\gamma}\uparrow 1$ becomes meaningless as the rare events of interest become increasingly difficult to observe as $\overline{\gamma}\uparrow 1$ (see also Section~\ref{sub_scalar}.) One may take recourse to sophisticated simulation techniques like importance sampling (see, for example,~\cite{Dupuisimpsamp}), but such approaches require characterization of the distributions in question, which is addressed in this paper.
\end{s_enumerate_noindent}

More broadly, the techniques developed in this paper are fairly general and go beyond the setting of Kalman filtering with intermittent observations. There is a key difference in the MDP arguments used here and conventional methods for analyzing the moderate (or large) deviations for stationary measures of Markov processes, where it is generally assumed that the underlying Markov process is positive recurrent and moderate deviations of stationary measures then follow from that of finite dimensional distributions (see, for example,~\cite{DeuschelStroock},\cite{FreidlinWentzell}.) However, the Markov processes governing the RRE are not, in general, positive recurrent, as is the case for a large class of iterated function systems (\cite{DiaconisFreedman}.) Our analysis proceeds by studying the probability measures induced on the space of random function compositions (strings) and developing its topological properties, as detailed in the paper. Several intermediate results obtained in the process are of independent interest and follow under more general assumptions. Our tools are applicable to the analysis of more complex networked control systems (see, for example,~\cite{RDS-ACC-2010}) and hybrid or switched systems.

We summarize the organization of the paper. Subsection~\ref{notprel} presents notation and preliminaries on moderate deviations. Subsection~\ref{setup} sets up the problem and prior work is briefly reviewed in Subsection~\ref{prior_work}. Several key approximation results are presented in Section~\ref{approx12}, whereas the main results of this paper are stated and discussed in Section~\ref{main_res}. MDP for finite dimensional distributions of the RRE sequence is analyzed in Section~\ref{mod_finite}, whereas Section~\ref{inv_MDP} systematically carries out the steps required to obtain the main results on MDP for stationary distributions, which is completed in Section~\ref{proof_theorem}. In Section~\ref{comp}, we present efficient ways to solve the variational problems involving the rate function. Numerical studies justifying the theoretical results for a scalar system arepresented in Section~\ref{sub_scalar}. Section~\ref{conclusion} concludes the paper.


\subsection{Notation and Preliminaries}
\label{notprel} 
%

Denote by: $\mathbb{R}$, the reals; $\mathbb{R}^{M}$,
the $M$-dimensional Euclidean space; $\mathbb{T}$, the integers; $\mathbb{T}_{+}$, the non-negative integers; $\mathbb{N}$, the natural numbers; and $\mathcal{X}$, a generic space.
For a subset $B\subset \mathcal{X}$,  $\mathbb{I}_{B}:\mathcal{X}\longmapsto\{0,1\}$ is the indicator
function, which is~$1$ when the argument is in~$B$ and zero otherwise; and $\mbox{id}_{\mathcal{X}}$ is the identity function on
$\mathcal{X}$.
A metric space $\mathcal{X}$ with metric $d_{\mathcal{X}}$ is
denoted by the pair $(\mathcal{X},d_{\mathcal{X}})$. The corresponding Borel
algebra is denoted by $\mathcal{B}(\mathcal{X})$. For $x\in\mathcal{X}$, the open ball of radius $\varepsilon>0$ centered at $x$ is denoted by $B_{\varepsilon}(x)$, i.e., $B_{\varepsilon}(x)=\left\{y\in\mathcal{X}~|~d_{\mathcal{X}}(y,x)<\varepsilon\right\}$.
The closure of $B_{\varepsilon}(x)$ is the closed ball of radius $\varepsilon>0$ centered at $x$ and is denoted by $\overline{B}_{\varepsilon}(x)$.
For any set $\Gamma\subset\mathcal{X}$, the open $\varepsilon$-neighborhood of $\Gamma$ is given by
\begin{equation}
\label{prel101}
\Gamma_{\varepsilon}=\left\{y\in\mathcal{X}~|~\inf_{x\in\Gamma}d_{\mathcal{X}}(y,x)<\varepsilon\right\}
\end{equation}
It can be shown that $\Gamma_{\varepsilon}$ is an open set. Similarly, the closed $\varepsilon$-neighborhood of $\Gamma$ is given by
\begin{equation}
\label{prel102}
\overline{\Gamma_{\varepsilon}}=\left\{y\in\mathcal{X}~|~\inf_{x\in\Gamma}d_{\mathcal{X}}(y,x)\leq\varepsilon\right\}
\end{equation}
which is a closed set.
For a set $\Gamma\subset\mathcal{X}$, we denote by $\Gamma^{\circ}$ and $\overline{\Gamma}$ its interior and closure respectively.

\textbf{The Banach space of symmetric matrices}

Let $\mathbb{S}^{N}$ denote the separable Banach
space of symmetric $N\times N$ matrices,
equipped with the induced 2-norm. The subset $\mathbb{S}^{N}_{+}$
of positive semidefinite matrices is a closed, convex, solid,
normal, minihedral cone in $\mathbb{S}^{N}$, with non-empty
interior $\mathbb{S}^{N}_{++}$, the set of positive definite
matrices. The cone $\mathbb{S}^{N}_{+}$ induces a partial order in $\mathbb{S}^{N}$. For $X,Y\in\mathbb{S}^{N}$, we write $X\preceq Y$ ($Y\succeq X$) to denote $Y-X\in\mathbb{S}^{N}_{+}$; $X\prec Y$ to denote $X\preceq Y$ and $X\neq Y$; $X\ll Y$ ($Y\gg X$) to denote $Y-X\in\mathbb{S}^{N}_{++}$.


\textbf{Limit Notations}
Let $h:\mathbb{R}\longmapsto\mathbb{R}$ be a measurable function.

The notation $\lim_{z\rightarrow x}f(z)=y$ implies that for every sequence $\{z_{n}\}_{n\in\mathbb{N}}$ in $\mathbb{R}$ with $\lim_{n\rightarrow\infty}|z_{n}-x|=0$, we have $\lim_{n\rightarrow\infty}|f(z_{n})-y|=0$.
The notation $\lim_{z\uparrow x}f(z)=y$ implies that for every sequence $\{z_{n}\}_{n\in\mathbb{N}}$ in $\mathbb{R}$ with $z_{n}<x$ and $\lim_{n\rightarrow\infty}|z_{n}-x|=0$, we have $\lim_{n\rightarrow\infty}|f(z_{n})-y|=0$.
The notations $\downarrow$ and $\uparrow$ have similar implications when working with limit inferiors and superiors.

\textbf{Probability measures on metric spaces:} Let: $(\mathcal{X},d_{\mathcal{X}})$ a
complete separable metric space $\mathcal{X}$ with
metric~$d_{\mathcal{X}}$;  $\mathbb{B}(\mathcal{X})$ its Borel algebra;
 $B(\mathcal{X})$ the Banach space of real-valued bounded
functions on $\mathcal{X}$, equipped with the sup-norm, i.e.,
$f\in B(\mathcal{X}), \:\|f\|=\sup_{x\in\mathcal{X}}|f(x)|$; and $C_{b}(\mathcal{X})$  the subspace of $B(\mathcal{X})$ of continuous
functions. Let $\mathcal{P}(\mathcal{X})$ be the
set of probability measures on $\mathcal{X}$. For
$\mu\in\mathcal{M}(\mathcal{X})$, we define the support of $\mu$,
$\mbox{supp}(\mu)$, by
\begin{equation}
\label{def_support}
\mbox{supp}(\mu)=\left\{x\in\mathcal{X}\left|\right.\mu(B_{\varepsilon}(x))>0,
\:\:\forall\varepsilon>0\right\}
\end{equation}
It follows that $\mbox{supp}(\mu)$ is a closed set.
The sequence $\{\mu_{t}\}_{t\in\mathbb{T}_{+}}$ in
$\mathcal{P}(\mathcal{X})$ converges weakly to $\mu\in \mathcal{P}(\mathcal{X})$ if
\begin{equation}
\label{WC1a}
\lim_{t\rightarrow\infty}<f,\mu_{t}>\,=\,<f,\mu>,~~\forall~f\in
C_{b}(\mathcal{X})
\end{equation}
Weak convergence is denoted by $\mu_{t}\Longrightarrow\mu$ and is
also referred to as convergence in distribution. The weak topology
on $\mathcal{P}(\mathcal{X})$ generated by weak convergence can be metrized. In particular, e.g., \cite{Jacod-Shiryaev}, one has the
Prohorov metric $d_{p}$ on $\mathcal{P}(\mathcal{X})$, such that
the metric space $\left(\mathcal{P}(\mathcal{X}),d_{p}\right)$ is
complete, separable, and a sequence $\{\mu_{t}\}_{t\in\mathbb{T}_{+}}$ in $\mathcal{P}(\mathcal{X})$
converges weakly to $\mu$ in $\mathcal{P}(\mathcal{X})$ \emph{iff} $\lim_{t\rightarrow\infty}d_{p}(\mu_{t},\mu)=0$.
The distance between two probability measures $\mathbb{\mu}_{1},\mathbb{\mu}_{2}$ in $\mathcal{P}(\mathcal{X})$ is computed as:
\begin{equation}
\label{WC23}
d_{P}\left(\mathbb{\mu}_{1},\mathbb{\mu}_{2}\right)=\inf\left\{\varepsilon>0~|~\mathbb{\mu}_{1}(\mathcal{F})\leq \mathbb{\mu}_{2}(\mathcal{F}_{\varepsilon})+\varepsilon,~~~\forall~\mbox{closed set}~\mathcal{F}\right\}
\end{equation}

\textbf{Moderate Deviations:}
\begin{definition}
\label{moddev100}
Let $\{\mathbb{\mu}^{\overline{\gamma}}\}$ be a family of probability measures on the complete separable metric space $(\mathcal{X},d_{\mathcal{X}})$ indexed by the real-valued parameter $\overline{\gamma}$ taking values in $(0,1)$.
Let $h:(0,1)\longmapsto\mathbb{R}_{+}$ be a non-decreasing function on $(0,1)$ with
\begin{equation}
\label{moddev}
\lim_{\overline{\gamma}\uparrow 1}h(\overline{\gamma})=\infty
\end{equation}
Let $I:\mathcal{X}\longmapsto\overline{\mathbb{R}}_{+}$ be an extended valued lower semicontinuous function.
The family $\{\mathbb{\mu}^{\overline{\gamma}}\}$ is said to satisfy a moderate deviations principle (MDP) with rate function $I(\cdot)$ at scale $h(\overline{\gamma})$ as $\overline{\gamma}\uparrow 1$ if the following holds:
\begin{equation}
\label{moddev1}
\liminf_{\overline{\gamma}\uparrow
1}\frac{1}{h(\overline{\gamma})}\ln\mathbb{\mu}^{\overline{\gamma}}\left(\mathcal{O}\right)\geq -\inf_{x\in\mathcal{O}}I(x),~~~\mbox{for every open set $\mathcal{O}\in\mathcal{X}$}
\end{equation}
\begin{equation}
\label{moddev2}
\limsup_{\overline{\gamma}\uparrow
1}\frac{1}{h(\overline{\gamma})}\ln\mathbb{\mu}^{\overline{\gamma}}\left(\mathcal{F}\right)\leq -\inf_{x\in\mathcal{F}}I(x),~~~\mbox{for every closed set $\mathcal{F}\in\mathcal{X}$}
\end{equation}
\end{definition}
The function $I(\cdot)$ is called the MDP rate function. The lower semicontinuity implies that the level sets of $I(\cdot)$, i.e., sets of the form $\{x\in\mathcal{X}~|~I(x)\leq\alpha\}$ for every $\alpha\in\mathbb{R}_{+}$, are closed. If in addition, the levels sets are compact (for every $\alpha$), $I(\cdot)$ is said to be a good rate function, and the corresponding family  $\{\mathbb{\mu}^{\overline{\gamma}}\}$ is said to satisfy an MDP with good rate function $I(\cdot)$.

It can be shown that the MDP, as stated in~\eqref{moddev1}-\eqref{moddev2}, is equivalent to the following:
\begin{equation}
\label{moddev10000}
-\inf_{x\in\Gamma^{\circ}}I(x)\leq\liminf_{\overline{\gamma}\uparrow
1}\frac{1}{h(\overline{\gamma})}\ln\mathbb{\mu}^{\overline{\gamma}}\left(\Gamma\right)\leq\limsup_{\overline{\gamma}\uparrow
1}\frac{1}{h(\overline{\gamma})}\ln\mathbb{\mu}^{\overline{\gamma}}\left(\Gamma\right)\leq-\inf_{x\in\overline{\Gamma}}I(x)
\end{equation}
for every measurable set $\Gamma$. In other words,~\eqref{moddev10000} holds \emph{iff} \eqref{moddev1}-\eqref{moddev2} hold.

The above formulation of MDP is similar in spirit to the theory of large deviations principle (LDP). In fact, in the above definition, if the scale function $h(\cdot)$ is a polynomial in $\overline{\gamma}$, the family $\{\mathbb{\mu}^{\overline{\gamma}}\}$ is said to satisfy an LDP (see, for example,~\cite{DeuschelStroock,DemboZeitouni}.) This conceptual similarity is manifested in some of the proof techniques developed in the paper having parallels with their counterparts in the theory of LDP.

Before interpreting the consequences of an MDP as defined above, we consider the notion of a \emph{rare event}, which is the central motivation to all MDP (and LDP):
\begin{definition}[Rare Event]:
\label{moddev3} A set $\Gamma\subset\mathcal{B}(\mathcal{X})$ is called a rare event with respect to the family $\{\mathbb{\mu}^{\overline{\gamma}}\}$ of probability measures, if $\lim_{\overline{\gamma}\uparrow 1}\mathbb{\mu}^{\overline{\gamma}}(\Gamma)=0$.
In other words, the event $\Gamma$ becomes increasingly difficult to observe (i.e., it becomes rare) as $\overline{\gamma}\uparrow 1$.
\end{definition}
Once a rare event $\Gamma$ is identified, the next natural question is the rate at which its probability goes to zero under $\mathbb{\mu}^{\overline{\gamma}}$ as $\overline{\gamma}\uparrow 1$. This is answered by an MDP, which also gives a
complete characterization of the family as $\overline{\gamma}\uparrow 1$. Indeed, from Def.~\ref{moddev100} it is not hard to see that, if the family $\{\mathbb{\mu}^{\overline{\gamma}}\}$ satisfies an MDP, we have for every measurable set $\Gamma\in\mathcal{X}$:
\begin{equation}
\label{moddev5}
c_{1}(\overline{\gamma})e^{-h(\overline{\gamma})\inf_{x\in\Gamma^{\circ}}I(x)}\leq\mathbb{\mu}^{\overline{\gamma}}(\Gamma)\leq c_{2}(\overline{\gamma})e^{-h(\overline{\gamma})\inf_{x\in\overline{\Gamma}}I(x)}
\end{equation}
where $c_{1},c_{2}:(0,1)\longmapsto\mathbb{R}_{+}$ are functions, such that, $\lim_{\overline{\gamma}\uparrow 1}c_{i}(\overline{\gamma})=1,~~i=1,2$.
For brevity, we subsequently use the notation
\begin{equation}
\label{moddev7}
\mathbb{\mu}^{\overline{\gamma}}(\Gamma)\sim e^{-h(\overline{\gamma})\inf_{x\in\Gamma}I(x)}
\end{equation}
as a short form of (\ref{moddev5}).

Now assume $\Gamma$ is a rare event and, to avoid unnecessary technicalities, also assume that $\Gamma$ is an $I$-continuity set, i.e., $\inf_{x\in\Gamma^{\circ}}=\inf_{x\in\overline{\Gamma}}=\inf_{x\in\Gamma}$. Then, by (\ref{moddev5}) we must have $\inf_{x\in\Gamma}>0$ (so that the probabilities decay to zero.) Thus, (\ref{moddev5}) implies that the probability of the rare event $\Gamma$ decays exponentially at a scale $h(\overline{\gamma})$ to zero, $\inf_{x\in\Gamma}I(x)$ being best exponent (or rate) of decay. Such a characterization of the best decay rate of a rare event is extremely important in system analysis and, as will be seen in the paper, offers considerable insight into system design apart from providing a complete characterization of the measures $\mathbb{\mu}^{\overline{\gamma}}$.

\section{Problem Formulation}
\label{prob_form} We split the present section into two subsections, Subsection~\ref{setup} briefly summarizing the model of Kalman filtering with
intermittent observations, while in Subsection~\ref{prior_work} we review some results
from~\cite{Riccati-weakconv} on weak convergence of the random
error covariance matrices resulting from the above filtering model.

\subsection{Setup}
\label{setup}


We start by reviewing the model of Kalman filtering with intermittent observations in~\cite{Bruno}. Let
\begin{equation}
\label{sys_model} \mathbf{x}_{t+1}=A\mathbf{x}_{t}+\mathbf{w}_{t}
\end{equation}
\begin{equation}
\label{sys_model1} \mathbf{y}_{t}=C\mathbf{x}_{t}+\mathbf{v}_{t}
\end{equation}
Here $\mathbf{x}_{t}\in\mathbb{R}^{N}$ is the signal (state)
vector, $\mathbf{y}_{t}\in\mathbb{R}^{M}$ is the observation
vector, $\mathbf{w}_{t}\in\mathbb{R}^{N}$ and
$\mathbf{v}_{t}\in\mathbb{R}^{M}$ are Gaussian random vectors with
zero mean and covariance matrices $Q$ and $R$,
respectively. The sequences
$\{\mathbf{w}_{t}\}_{t\in\mathbb{T}_{+}}$ and
$\{\mathbf{v}_{t}\}_{t\in\mathbb{T}_{+}}$ are uncorrelated and
mutually independent. Also, assume that the initial state
$\mathbf{x}_{0}$ is a zero-mean Gaussian vector with covariance
$P_{0}$. Unless otherwise stated, we use the following
standing assumption throughout the paper:

\textbf{Assumption (E)}: The
pair $(C,A)$ is observable and $Q,R$
are positive definite. The assumption $Q\gg 0$ implies the controllability of the pair $(A,Q^{1/2})$. The main results of
the paper require all these assumptions, but several intermediate
results of independent interest hold under less stringent
assumptions. In that case, they are noted explicitly.

The m.m.s.e.~predictor $\widehat{\mathbf{x}}_{t|t-1}$ of the
signal vector $\mathbf{x}_{t}$ given the observations
$\{\mathbf{y}_{s}\}_{0\leq s< t}$ is the conditional mean. It is
recursively implemented by the Kalman filter.
 The sequence of conditional prediction error
covariances, $\left\{P_{t}\right\}_{t\in\mathbb{T}_{+}}$, is then
given by
{\small

\begin{eqnarray}
\label{sys_model3}
P_{t}&=&\mathbb{E}\left[\left(\mathbf{x}_{t}-\widehat{\mathbf{x}}_{t|t-1}\right)
\left(\mathbf{x}_{t}-\widehat{\mathbf{x}}_{t|t-1}\right)^{T}\left|\right.\{\mathbf{y}(s)\}_{0\leq
s<t}\right]
\\
\label{sys_model2}
P_{t+1}&=&AP_{t}A^{T}+Q-AP_{t}C^{T}\left(CP_{t}C^{T}+R\right)^{-1}CP_{t}A^{T}
\end{eqnarray}}
Under the hypothesis of controllability of the pair $(A,Q^{1/2})$ and
observability of the pair $(C,A)$, the deterministic sequence
$\left\{P_{t}\right\}_{t\in\mathbb{T}_{+}}$ converges to a unique
value $P^{\ast}$ (which is a fixed point of the algebraic Riccati
equation~(\ref{sys_model2})) from any initial condition $P_{0}$. \cite{kalman1960}.

This corresponds to the classical perfect observation scenario,
where the estimator has complete knowledge of the observation
packet $\mathbf{y}_{t}$ at every time $t$. With intermittent
observations, the observation packets are dropped randomly (across
the communication channel to the estimator), and the estimator
receives observations at random times. We study the intermittent
observation model considered in~\cite{Bruno}, where the channel
randomness is modeled by a sequence
$\left\{\gamma_{t}\right\}_{t\in\mathbb{T}_{+}}$ of
i.i.d.~Bernoulli random variables with mean $\overline{\gamma}$
(note, $\overline{\gamma}$ then denotes the arrival probability.)
Here, $\gamma_{t}=1$ corresponds to the arrival of the observation
packet $\mathbf{y}_{t}$ at time $t$ to the estimator, whereas a
packet dropout corresponds to $\gamma_{t}=0$. Denote by
$\widetilde{\mathbf{y}}_{t}$ the pair $\widetilde{\mathbf{y}}_{t}=\left(\mathbf{y}_{t}\mathbb{I}_{(\gamma_{t}=1)},\gamma_{t}\right)$.
Under the TCP packet acknowledgement protocol in~\cite{Bruno} (the
estimator knows at each time whether the observation packet
arrived or not), the m.m.s.e.~predictor of the signal is given by:
\begin{equation}
\label{sys_model6}
\widehat{\mathbf{x}}_{t|t-1}=\mathbb{E}\left[\mathbf{x}_{t}\left|
\left\{\widetilde{\mathbf{y}}_{s}\right\}_{0\leq s< t}\right.\right]
\end{equation}
A modified form of the Kalman filter giving a recursive
implementation of the estimator in (\ref{sys_model6}) is
in~\cite{Bruno}. The sequence of conditional prediction error
covariance matrices, $\left\{P_{t}\right\}_{t\in\mathbb{T}_{+}}$,
is updated according to the following random Riccati
equation (RRE):
{\small
\begin{eqnarray}
\label{sys_model8}
P_{t}&=&\mathbb{E}\left[\left(\mathbf{x}_{t}-\widehat{\mathbf{x}}_{t|t-1}\right)
\left(\mathbf{x}_{t}-\widehat{\mathbf{x}}_{t|t-1}\right)^{T}\left|\right.\left\{\widetilde{\mathbf{y}}(s)\right\}_{0\leq
s<t}\right]\\
\label{sys_model7}
P_{t+1}&=&AP_{t}A^{T}+Q-\gamma_{t}AP_{t}C^{T}\left(CP_{t}C^{T}+R\right)^{-1}CP_{t}A^{T}
\end{eqnarray}}
A convenient representation of $P_{t}$ is obtained by defining the functions $f_{0},f_{1}:\mathbb{S}_{+}^{N}\longmapsto\mathbb{S}_{+}^{N}$ as\footnote{$f_{0}$ corresponds to the Lyapunov operator, whereas $f_{1}$ is the Riccati operator.}:
\begin{equation}
\label{map_f0}
f_{0}(X)=AXA^{T}+Q,~~~\forall X\in\mathbb{S}_{+}^{N}
\end{equation}
\begin{equation}
\label{map_f1}
f_{1}(X)=AXA^{T}+Q-\gamma_{t}AXC^{T}\left(CXC^{T}+R\right)^{-1}CXA^{T},~~~\forall X\in\mathbb{S}_{+}^{N}
\end{equation}
We then have for all $t\geq 1$
\begin{equation}
\label{mapf0f1}
P_{t}=f_{\gamma_{t-1}}\circ f_{\gamma_{t-2}}\circ\cdots\circ f_{\gamma_{0}}(P_{0})
\end{equation}
Unlike the classical case, the sequence
$\left\{P_{t}\right\}_{t\in\mathbb{T}_{+}}$ is now random (because
of its dependence on the random sequence
$\left\{\gamma_{t}\right\}_{t\in\mathbb{T}_{+}}$.) Thus, for each
$t$, $P_{t}$ is a random element of $S^{N}_{+}$, and we denote by
$\mathbb{\mu}_{t}^{\overline{\gamma},P_{0}}$ its distribution (the
measure it induces on $S^{N}_{+}$.) The superscripts
$\overline{\gamma}$, $P_{0}$ emphasize the dependence of
$\mathbb{\mu}_{t}^{\overline{\gamma},P_{0}}$ on the packet arrival
probability and the initial condition. We often use the notations $\mathbb{P}^{\overline{\gamma},P_{0}},\mathbb{E}^{\overline{\gamma},P_{0}}$ to denote probability and expectation operators respectively, when the system is operated with observation arrival probability $\overline{\gamma}$ and initial covariance $P_{0}$.

\subsection{Prior work}
\label{prior_work}
The following extends Theorems~9, 10 in~\cite{Riccati-weakconv} on the weak convergence of the RRE sequence.
%

\begin{theorem}
\label{weakconv1}Let assumption~\textbf{(E.1)} hold. Then,
\begin{itemize}
\item
For each $\overline{\gamma}>0$,
there exists a unique invariant distribution
$\mathbb{\mu}^{\overline{\gamma}}$ s.t.~the sequence
$\left\{P_{t}\right\}_{t\in\mathbb{T}_{+}}$ (or  sequence
$\left\{\mathbb{\mu}_{t}^{\overline{\gamma},P_{0}}\right\}_{t\in\mathbb{T}_{+}}$
of measures) converges weakly to
$\mathbb{\mu}^{\overline{\gamma}}$ from any initial condition
$P_{0}\in\mathbb{S}_{+}^{N}$.\\


\item Define the set
$\mathcal{S}\subset\mathbb{S}^{N}_{+}$ by
\begin{equation}
\label{supp_inv1} \mathcal{S}=\left\{f_{i_{1}}\circ
f_{i_{2}}\circ\cdots\circ
f_{i_{s}}\left(P^{\ast}\right)\left|\right.i_{r}\in\{0,1\},\,1\leq
r\leq s,\:s\in\mathbb{T}_{+}\right\}
\end{equation}
Then\footnote{In the definition of $\mathcal{S}$ ((\ref{supp_inv1})), $s$ can take
the value 0, implying $P^{\ast}\in\mathcal{S}$.}, if $0<\overline{\gamma}<1$,
\begin{equation}
\label{supp_inv2}
\mbox{supp}\left(\mathbb{\mu}^{\overline{\gamma}}\right)=\mbox{cl}(\mathcal{S})
\end{equation}
where $\mbox{cl}(\mathcal{S})$ denotes the topological closure of
$\mathcal{S}$ in $\mathbb{S}^{N}_{+}$ and $\mbox{supp}$ denotes the support of a probability measure (\cite{Zaharopol}). In particular, we have
\begin{equation}
\label{main_th1} \mathbb{\mu}^{\overline{\gamma}}
\left(\left\{Y\in\mathbb{S}^{N}_{+}\left|\right.Y\succeq
P^{\ast}\right\}\right)=1
\end{equation}
\end{itemize}
\end{theorem}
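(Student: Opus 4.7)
The theorem has three essentially independent pieces: (i) weak convergence to a unique invariant measure for every $\overline{\gamma}>0$, (ii) the support characterization $\mbox{supp}(\mathbb{\mu}^{\overline{\gamma}}) = \mbox{cl}(\mathcal{S})$, and (iii) the lower bound $Y\succeq P^{\ast}$ almost surely. The plan is to reduce (i) to the prior weak convergence theorem by establishing stochastic boundedness ($\overline{\gamma}^{\sbb}=0$) for general observable systems, then obtain (ii) by pushing forward the Bernoulli law through continuous compositions, and finally derive (iii) from the monotonicity and fixed-point property of the Riccati/Lyapunov maps.

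For (i), Theorem 9 of \cite{Riccati-weakconv} already gives existence, uniqueness and weak convergence on the stochastically bounded regime $\overline{\gamma}>\overline{\gamma}^{\sbb}$. The task is therefore to prove $\overline{\gamma}^{\sbb}=0$ under Assumption (E). The idea is to exploit observability of $(C,A)$: there is a finite window length $\ell$ (the observability index) such that $\ell$ consecutive arrivals drive $P_{t+\ell}$ into a bounded set independently of $P_{t}$, since observability makes the Riccati iterate over a full window a strict contraction on the positive semidefinite cone. Since every $\overline{\gamma}>0$ produces such windows infinitely often almost surely, and the Lyapunov iterates $f_0^k$ have polynomially controlled growth between renewals, a straightforward Borel--Cantelli / renewal argument gives $\sup_t\mathbb{E}[\operatorname{tr}(P_t)\wedge M]$-style bounds sufficient for stochastic boundedness. (The authors defer this to Appendix~\ref{proof_stoch_bound_er}; the subtle point that makes it nontrivial for general, possibly non-invertible $C$ is handling the window rather than a single step.)

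For (ii), both inclusions follow cleanly from the invariance and continuity of $f_0,f_1$. Forward inclusion: first show $P^{\ast}\in\mbox{supp}(\mathbb{\mu}^{\overline{\gamma}})$. For any $\varepsilon>0$, classical convergence of the deterministic Riccati iteration gives a $T$ such that $f_{1}^{T}(P_0)\in B_{\varepsilon}(P^{\ast})$ for any fixed $P_0$ in a compact set of initializations; the event $\{\gamma_0=\cdots=\gamma_{T-1}=1\}$ has probability $\overline{\gamma}^{T}>0$, so $\mathbb{\mu}_T^{\overline{\gamma},P_0}(B_{\varepsilon}(P^{\ast}))>0$ and weak convergence plus the Portmanteau theorem yields $\mathbb{\mu}^{\overline{\gamma}}(B_{\varepsilon}(P^{\ast}))>0$. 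For a general element $Y=f_{i_1}\circ\cdots\circ f_{i_s}(P^{\ast})\in\mathcal{S}$, continuity of the composition gives $\delta>0$ such that $f_{i_1}\circ\cdots\circ f_{i_s}(B_{\delta}(P^{\ast}))\subset B_{\varepsilon}(Y)$; combining with the positive probability of the specific arrival pattern $(\gamma_t,\ldots,\gamma_{t+s-1})=(i_s,\ldots,i_1)$ and invariance of $\mathbb{\mu}^{\overline{\gamma}}$ gives $\mathbb{\mu}^{\overline{\gamma}}(B_{\varepsilon}(Y))>0$. Since $\mbox{supp}(\mathbb{\mu}^{\overline{\gamma}})$ is closed, $\mbox{cl}(\mathcal{S})\subseteq\mbox{supp}(\mathbb{\mu}^{\overline{\gamma}})$. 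Reverse inclusion: initialize the RRE at $P_0=P^{\ast}\in\mathcal{S}$. The set $\mathcal{S}$ is forward-invariant under both $f_0$ and $f_1$ by its definition, so $\mathbb{\mu}_t^{\overline{\gamma},P^{\ast}}(\mathcal{S})=1$, hence $\mathbb{\mu}_t^{\overline{\gamma},P^{\ast}}(\mbox{cl}(\mathcal{S}))=1$; the Portmanteau theorem applied to the closed set $\mbox{cl}(\mathcal{S})$ and the weak convergence from part (i) force $\mathbb{\mu}^{\overline{\gamma}}(\mbox{cl}(\mathcal{S}))=1$, so $\mbox{supp}(\mathbb{\mu}^{\overline{\gamma}})\subseteq\mbox{cl}(\mathcal{S})$.

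For (iii), use the classical monotonicity of $f_0$ and $f_1$ on $\mathbb{S}^{N}_{+}$ together with the Riccati fixed-point identities $f_1(P^{\ast})=P^{\ast}$ and $f_0(P^{\ast})=AP^{\ast}A^{T}+Q\succeq P^{\ast}$ (the latter because $P^{\ast}-AP^{\ast}A^{T}=Q-AP^{\ast}C^{T}(CP^{\ast}C^{T}+R)^{-1}CP^{\ast}A^{T}\preceq Q$). By induction on the length of the composition defining any element of $\mathcal{S}$, every such element lies in the closed set $\{Y\in\mathbb{S}^{N}_{+}\mid Y\succeq P^{\ast}\}$, hence so does $\mbox{cl}(\mathcal{S})$, and (\ref{main_th1}) follows from (ii). The main obstacle is (i): the stochastic boundedness proof for non-invertible $C$, since for invertible $C$ the one-step Riccati map is already a uniform contraction toward a bounded region, whereas the general case requires the window-based renewal argument sketched above.
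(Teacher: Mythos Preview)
Your overall strategy matches the paper's. The only new content the paper supplies for this theorem is part (i), namely the proof that $\overline{\gamma}^{\sbb}=0$ (Lemma~\ref{cr_sb}); parts (ii) and (iii) are quoted from \cite{Riccati-weakconv}. For (i) the paper does exactly what you sketch: it uses the observability window of length $N$ together with the uniform bound $f_{1}^{N}(X)\preceq\kappa I$ (Lemma~7.1 in \cite{Jazwinski}), defines the renewal time $\widetilde{t}$ as the most recent occurrence of $N$ consecutive arrivals, bounds $\|P_{t}\|$ by $\|f_{0}^{t-\widetilde{t}}(\kappa_{1}I)\|$, and then controls $\mathbb{P}(t-\widetilde{t}\geq k)$ by a block-geometric estimate $\leq (1-\overline{\gamma}^{N})^{\lfloor k/N\rfloor}$.

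Two small repairs to your write-up. First, the growth of $f_{0}^{k}$ between renewals is \emph{geometric} in $k$ when $A$ is unstable, not polynomial; this is harmless because the tail of $t-\widetilde{t}$ also decays geometrically, which still yields stochastic boundedness, but the wording should be corrected. Second, in your forward inclusion for (ii), the step ``$\mathbb{\mu}_{T}^{\overline{\gamma},P_{0}}(B_{\varepsilon}(P^{\ast}))>0$ plus Portmanteau gives $\mathbb{\mu}^{\overline{\gamma}}(B_{\varepsilon}(P^{\ast}))>0$'' is not valid as stated: Portmanteau gives $\mathbb{\mu}^{\overline{\gamma}}(\mathcal{O})\leq\liminf_{t}\mathbb{\mu}_{t}(\mathcal{O})$, so positivity at a single $T$ is not enough. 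Either use the uniform convergence of Lemma~\ref{unif_conv} (so that $f_{1}^{T}(X)\in B_{\varepsilon}(P^{\ast})$ for \emph{all} $X$, giving $\mathbb{\mu}_{t}(B_{\varepsilon}(P^{\ast}))\geq\overline{\gamma}^{T}$ for every $t\geq T$), or argue directly from invariance of $\mathbb{\mu}^{\overline{\gamma}}$ as you do for general $Y\in\mathcal{S}$. With these fixes your arguments for (ii) and (iii) are correct and in fact more explicit than what the present paper provides.
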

 The proof is in Appendix~\ref{app:prooflemmacr_sb}. For a detailed discussion of the above results, the reader is referred to~\cite{Riccati-weakconv}.

\section{Some key approximation results}
\label{approx12}
In this section we present some results on random compositions of Lyapunov and Riccati operators leading to the RRE sequence (Subsection~\ref{notation}.) In Subsection~\ref{key_approx}, we present some key approximation results that are of independent interest and establish several useful properties of the RRE and the classical Riccati operator.


\subsection{Preliminary Results}
\label{notation}The RRE sequence is an iterated function system (see, for example,~\cite{DiaconisFreedman}) comprising of random compositions of Lyapunov and Riccati operators. To understand the system, we study the behavior of such random function compositions, where not only the numerical value of the composition is important, but also the composition pattern is relevant. To formalize this study, we start with the following definitions:

\begin{definition}[String]: Let $P_{0}\in\mathbb{S}_{+}$. A string
$\mathcal{R}$ with initial state $P_{0}$ and length $n\in\mathbb{T}_{+}$ is a $(n+1)$-tuple of the form:
\begin{equation}
\label{def_string} \mathcal{R}=\left(f_{i_{1}}, f_{i_{2}},\cdots
f_{i_{n}},P_{0}\right),~~~i_{1},\cdots,i_{n}\in\{0,1\}
\end{equation}
where $f_{0}$ and $f_{1}$ correspond to the Lyapunov and Riccati updates in eqns.~(\ref{map_f0},\ref{map_f1}). The length of a string $\mathcal{R}$ is denoted by $\mbox{len}(\mathcal{R})$.
The set of all possible strings is denoted by
$\overline{\mathcal{S}}$.
\end{definition}

\begin{remark} Note that a string $\mathcal{R}$ can be of length 0; then it is represented as a 1-tuple, consisting of only the initial condition. We introduce notation here. Let $t_{1},t_{2},\cdots,t_{l}$ be non-negative integers, such that, $\sum_{i=1}^{l}t_{i}=n$ and $i^{k}_{j}\in\{0,1\}$ for $1\leq j\leq t_{k},~~1\leq k\leq l$, such that, for all $k$, $i^{k}_{j}=i^{k}_{1},~1\leq j\leq t_{k}$. Let $\mathcal{R}$ be a string of length $n$ of the form:
\begin{equation}
\label{string_def_remark}
\mathcal{R}=\left(f_{i^{1}_{1}},\cdots,f_{i^{1}_{t_{1}}},\cdots,f_{i^{2}_{1}},\cdots,f_{i^{2}_{t_{2}}},\cdots,f_{i^{l}_{1}},\cdots,f_{i^{l}_{t_{l}}},P_{0}\right)
\end{equation}
where the indices $i^{k}_{j}$ satisfy the relations above. For brevity, we will write $\mathcal{R}$ as:
\begin{equation}
\label{string_def_remark1}
\mathcal{R}=\left(f_{i^{1}_{1}}^{t_{1}},f_{i^{2}_{1}}^{t_{2}},\cdots,f_{i^{l}_{1}}^{t_{l}},P_{0}\right)
\end{equation}
For example, the string $\left(f_{0},f_{1},f_{1},f_{1},f_{0},f_{0},P_{0}\right)$ is written concisely as $\left(f_{0},f_{1}^{3},f_{0}^{2},P_{0}\right)$.
\end{remark}

\begin{definition}[Numerical Value of a String]: To every string $\mathcal{R}$ is associated its numerical
value, denoted by $\mathcal{N}(\mathcal{R})$, which is the
numerical evaluation of the function composition on the initial
state $P_{0}$, i.e., for $\mathcal{R}$ of the form $\mathcal{R}=\left(f_{i_{1}}, f_{i_{2}},\cdots
f_{i_{n}},P_{0}\right),~~i_{1},\cdots,i_{n}\in\{0,1\}$, we have\footnote{For function compositions, we adopt a similar notation to that of strings, namely, for example, we denote the composition $f_{0}\circ f_{1}\circ f_{1}\circ f_{1}\circ f_{0}\circ f_{0}(P_{0})$ by $f_{0}\circ f_{1}^{3}\circ f_{0}^{2}(P_{0})$.}
\begin{equation}
\label{num_string100}
\mathcal{N}(\mathcal{R})=f_{i_{1}}\circ f_{i_{2}}\circ\cdots\circ f_{i_{n}}(P_{0})
\end{equation}
Thus, the numerical value can be viewed as a
function $\mathcal{N}(\cdot)$ from the space
$\overline{\mathcal{S}}$ of strings to $\mathbb{S}_{+}^{N}$. We
abuse notation by denoting $\mathcal{N}(\overline{\mathcal{S}})$
to be the set of numerical values attainable, i.e.,
\begin{equation}
\label{def_numval1}
\mathcal{N}(\overline{\mathcal{S}})=\left\{\mathcal{N}(\mathcal{R})~|~\mathcal{R}\in\overline{\mathcal{S}}\right\}
\end{equation}
\end{definition}

\begin{remark}Note the difference between a string and its
numerical value. Two strings are equal \emph{iff} they comprise of
the same \emph{order} of function compositions applied to the same initial state.
In particular, two strings can be different, even if they evaluate
to the same numerical value.
\end{remark}

\begin{definition}[Concatenated Strings]: Let $n\in\mathbb{N}$ and
$t_{1}\leq t_{2}\leq\cdots\leq t_{n}\in\mathbb{T}_{+}$. Also, if
$t_{n}\geq 1$, choose $i_{1},i_{2},\cdots,i_{t_{n}}\in\{0,1\}$.
Then
\begin{equation}
\label{concat_string1}
\overline{\mathcal{R}}=\left(\left(f_{i_{t_{1}}},f_{i_{t_{1}-1}}\cdots,
f_{i_{1}},P_{0}\right),\left(f_{i_{t_{2}}},\cdots,
f_{i_{t_{1}}},\cdots,f_{i_{1}},P_{0}\right),\cdots,
\left(f_{i_{t_{n}}},\cdots, f_{i_{1}},P_{0}\right)\right)
\end{equation}
is a concatenated string of block length $n$ with initial state
$P_{0}$\footnote{We again adopt the convention that the first
block is simply $P_{0}$ if $t_{1}=0$.}.

We similarly define the numerical value of such a concatenated
string $\overline{\mathcal{R}}$ by
\begin{equation}
\label{concat_string2}
\mathcal{N}(\overline{\mathcal{R}})=\left(f_{i_{t_{1}}}\circ\cdots\circ
f_{i_{1}}(P_{0}),f_{i_{t_{2}}}\circ\cdots\circ
f_{i_{t_{1}}}\circ\cdots f_{i_{1}}(P_{0}),\cdots,
f_{i_{t_{n}}}\circ\cdots\circ
f_{i_{1}}(P_{0})\right)
\end{equation}
and note that
$\mathcal{N}(\overline{\mathcal{R}})\in\bigotimes_{i=1}^{n}\mathbb{S}_{+}^{N}$.

For fixed $P_{0}$, $n$, $t_{1}\leq t_{2}\leq\cdots\leq t_{n}$, the
set of such concatenated strings is denoted by
$\mathcal{S}_{t_{1},\cdots,t_{n}}^{P_{0}}$. The corresponding set
of numerical values is denoted by
$\mathcal{N}(\mathcal{S}_{t_{1},\cdots,t_{n}}^{P_{0}})$.

Finally, for
$\mathbf{X}\in\bigotimes_{i=1}^{n}\mathbb{S}_{+}^{N}$, the set
$\mathcal{S}_{t_{1},\cdots,t_{n}}^{P_{0}}(\mathbf{X})\subset\mathcal{S}_{t_{1},\cdots,t_{n}}^{P_{0}}$
consists of all strings with numerical value $\mathbf{X}$, i.e.,
\begin{equation}
\label{concat_string3}
\mathcal{S}_{t_{1},\cdots,t_{n}}^{P_{0}}(\mathbf{X})=\left\{\overline{\mathcal{R}}\in\mathcal{S}_{t_{1},
\cdots,t_{n}}^{P_{0}}~|~\mathcal{N}\left(\overline{\mathcal{R}}\right)=\mathbf{X}\right\}
\end{equation}

\end{definition}

A rigorous algebra of such strings can be developed, which we
undertake elsewhere. In the following, we present some important
properties of strings to be used later (see Appendix~\ref{proof_approx12} for a proof):
\begin{proposition}
\label{string_prop}
\begin{itemize}
\item[(i)] For $s,t\in\mathbb{T}_{+}$ and $s\leq t$, we have, $\mathcal{N}\left(\mathcal{S}_{s}^{P^{\ast}}\right)\subset\mathcal{N}\left(\mathcal{S}_{t}^{P^{\ast}}\right)$.
In particular, if for some $X\in\mathbb{S}_{+}^{N}$,$t_{0}\in\mathbb{T}_{+}$ and $i_{1},\cdots, i_{t_{0}}\in\{0,1\}$, the string $\mathcal{R}=\left(f_{i_{1}},\cdots,f_{i_{t_{0}}},P^{\ast}\right)$ belongs to $\mathcal{S}_{t_{0}}^{P^{\ast}}(X)$, we have
\begin{equation}
\label{def_StP200}
\left(f_{i_{1}},\cdots,f_{i_{t_{0}}}, f_{1}^{t-t_{0}},P^{\ast}\right)\in\mathcal{S}_{t}^{P^{\ast}}(X)\subset\mathcal{S}^{P^{\ast}}(X),~~~\forall t\geq t_{0}
\end{equation}

\item[(ii)] Fix $n\in\mathbb{N}$ and $t_{1}<\cdots <
t_{n}\in\mathbb{T}_{+}$. We then have, for
$0\leq\overline{\gamma}\leq 1$,
\begin{equation}
\label{concatS1}
\mathbb{P}^{\overline{\gamma},P_{0}}\left(\left(P_{t_{1}},\cdots,P_{t_{n}}\right)\in\mathcal{N}\left(\mathcal{S}_{t_{1},\cdots,t_{n}}^{P_{0}}\right)\right)=1
\end{equation}

\item[(iii)] Let $t\in\mathbb{T}_{+}$ and
$\mathcal{R}\in\mathcal{S}_{t}^{P_{0}}=\left(f_{i_{1}},\cdots,
f_{i_{t}},P_{0}\right)$ be a string. Then, there exists
$\alpha_{P^{0}}\in\mathbb{R}_{+}$, depending on $P^{0}$, such
that,
\begin{equation}
\label{concatS4}
f_{0}^{\pi(\mathcal{R})}\left(\alpha_{P^{0}}I\right)\succeq\mathcal{N}\left(\mathcal{R}\right)
\end{equation}
where
\begin{equation}
\label{concatS5} \pi\left(\mathcal{R}\right)=\left\{
\begin{array}{ll}
                    \sum_{j=1}^{t}\mathbb{I}_{\{0\}}(i_{j}) & \mbox{if $t\geq 1$} \\
                    0 & \mbox{otherwise}
                   \end{array}
          \right.
\end{equation}
counts the number of $f_{0}$'s in $\mathcal{R}$.
\end{itemize}
\end{proposition}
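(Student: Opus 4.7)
My plan is to prove the three parts separately since they have very different character.

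Part (i) is essentially a tautology once we exploit that $P^{\ast}$ is a fixed point of $f_{1}$. Given a string $\mathcal{R}=(f_{i_{1}},\ldots,f_{i_{t_{0}}},P^{\ast})\in\mathcal{S}_{t_{0}}^{P^{\ast}}(X)$ of length $t_{0}$, I simply insert $t-t_{0}$ copies of $f_{1}$ at the innermost position (right before the initial state $P^{\ast}$), producing $\widetilde{\mathcal{R}}=(f_{i_{1}},\ldots,f_{i_{t_{0}}},f_{1}^{t-t_{0}},P^{\ast})\in\mathcal{S}_{t}^{P^{\ast}}$. Because $f_{1}^{t-t_{0}}(P^{\ast})=P^{\ast}$, the numerical evaluation is unchanged: $\mathcal{N}(\widetilde{\mathcal{R}})=f_{i_{1}}\circ\cdots\circ f_{i_{t_{0}}}(P^{\ast})=X=\mathcal{N}(\mathcal{R})$. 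This establishes both the inclusion $\mathcal{N}(\mathcal{S}_{s}^{P^{\ast}})\subset\mathcal{N}(\mathcal{S}_{t}^{P^{\ast}})$ and the precise membership claim. Part (ii) is immediate from the representation $P_{t}=f_{\gamma_{t-1}}\circ\cdots\circ f_{\gamma_{0}}(P_{0})$ of the RRE: since $\gamma_{j}\in\{0,1\}$ almost surely, the concatenated tuple $(P_{t_{1}},\ldots,P_{t_{n}})$ is almost surely the numerical value of a bona fide concatenated string in $\mathcal{S}_{t_{1},\ldots,t_{n}}^{P_{0}}$.

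Part (iii) is the substantive claim and I plan to induct on $k=\pi(\mathcal{R})$. First choose $\alpha_{P_{0}}>0$ large enough that $\alpha_{P_{0}}I\succeq P^{\ast}$ and $\alpha_{P_{0}}I\succeq f_{1}^{s}(P_{0})$ for every $s\geq 0$; the latter condition can be met because, under Assumption (E), the Riccati iteration $\{f_{1}^{s}(P_{0})\}_{s\geq 0}$ converges to $P^{\ast}$ and is therefore bounded. For the base case $k=0$, the string consists solely of $f_{1}$'s and $\mathcal{N}(\mathcal{R})=f_{1}^{t}(P_{0})\preceq\alpha_{P_{0}}I=f_{0}^{0}(\alpha_{P_{0}}I)$ by construction.

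For the inductive step with $k\geq 1$, locate the leftmost (outermost) occurrence of $f_{0}$ in $\mathcal{R}$ and decompose $\mathcal{R}=(f_{1}^{s_{0}},f_{0},\mathcal{R}')$ where $\mathcal{R}'$ is a string with initial state $P_{0}$ and $\pi(\mathcal{R}')=k-1$. The inductive hypothesis yields $\mathcal{N}(\mathcal{R}')\preceq f_{0}^{k-1}(\alpha_{P_{0}}I)$, and monotonicity of $f_{0}$ gives $f_{0}(\mathcal{N}(\mathcal{R}'))\preceq f_{0}^{k}(\alpha_{P_{0}}I)=:Z$. A routine observation is that $Z\succeq P^{\ast}$: since $f_{0}(P^{\ast})=AP^{\ast}A^{T}+Q\succeq f_{1}(P^{\ast})=P^{\ast}$, induction on the exponent gives $f_{0}^{k}(P^{\ast})\succeq P^{\ast}$, and $\alpha_{P_{0}}I\succeq P^{\ast}$ together with monotonicity of $f_{0}$ yields $Z\succeq f_{0}^{k}(P^{\ast})\succeq P^{\ast}$. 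Applying the key monotonicity property of the Riccati operator, $Y\succeq P^{\ast}\Rightarrow f_{1}(Y)\preceq Y$, iterated $s_{0}$ times, gives $f_{1}^{s_{0}}(Z)\preceq Z$. Combining with the monotonicity of $f_{1}$ (acting on the already-proved bound $f_{0}(\mathcal{N}(\mathcal{R}'))\preceq Z$) yields $\mathcal{N}(\mathcal{R})=f_{1}^{s_{0}}(f_{0}(\mathcal{N}(\mathcal{R}')))\preceq f_{1}^{s_{0}}(Z)\preceq Z=f_{0}^{k}(\alpha_{P_{0}}I)$, closing the induction.

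The main technical hurdle is the contraction property $Y\succeq P^{\ast}\Rightarrow f_{1}(Y)\preceq Y$ used in the inductive step. This is a classical but non-trivial fact in the theory of the discrete algebraic Riccati equation: under the observability and controllability hypotheses of Assumption (E), $P^{\ast}$ is the maximal solution of the DARE, and the iteration $Y_{n+1}=f_{1}(Y_{n})$ started from any $Y_{0}\succeq P^{\ast}$ decreases monotonically to $P^{\ast}$. Modulo this ingredient, the remaining arguments—monotonicity of $f_{0}$ and $f_{1}$, the self-bounding inequality $f_{0}(P^{\ast})\succeq P^{\ast}$, and boundedness of the Riccati orbit starting from $P_{0}$—are immediate from the definitions or direct consequences of Assumption (E).
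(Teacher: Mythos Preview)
Your treatments of (i) and (ii) coincide with the paper's: both are one-line consequences of $f_{1}(P^{\ast})=P^{\ast}$ and of the iterated representation $P_{t}=f_{\gamma_{t-1}}\circ\cdots\circ f_{\gamma_{0}}(P_{0})$, respectively.

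For (iii), your inductive scheme is structurally parallel to the paper's (peel off a block of $f_{1}$'s, use monotonicity, recurse), but the invariant you rely on is incorrect. The implication ``$Y\succeq P^{\ast}\Rightarrow f_{1}(Y)\preceq Y$'' is \emph{not} a classical fact about the matrix Riccati operator, and it fails in general for $N\geq 2$. Linearizing at $P^{\ast}$: for $Y=P^{\ast}+\varepsilon H$ with $H\succ 0$,
\[
f_{1}(Y)-Y=\varepsilon\bigl(F^{\ast}H(F^{\ast})^{T}-H\bigr)+O(\varepsilon^{2}),
\]
where $F^{\ast}=A-K^{\ast}C$ is the closed-loop matrix. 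For your claim to hold even locally you would need $H\succeq F^{\ast}H(F^{\ast})^{T}$ for every $H\succ 0$; taking $H=I$ already forces $\|F^{\ast}\|\leq 1$. Observability and controllability only guarantee the spectral-radius bound $\rho(F^{\ast})<1$, and stable $F^{\ast}$ with $\|F^{\ast}\|>1$ arise routinely (any non-normal stable matrix suffices). Hence $\{Y:Y\succeq P^{\ast}\}\not\subset S^{-}:=\{X:f_{1}(X)\preceq X\}$ in general, and your inductive step breaks at the line $f_{1}^{s_{0}}(Z)\preceq Z$.

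The paper closes exactly this gap by invoking Bucy's result that $\{X\succeq\beta I\}\subset S^{-}$ for some sufficiently large $\beta$, then choosing $\alpha_{P_{0}}\geq\beta$ with $P_{0}\preceq\alpha_{P_{0}}I$. The induction (carried out from the innermost $f_{0}$ outward) then shows each successive state $f_{0}^{j}(\alpha_{P_{0}}I)$ remains $\succeq\beta I$ and hence in $S^{-}$. Your argument can be repaired along these lines---replacing the threshold $P^{\ast}$ by $\beta I$ and verifying $f_{0}^{j}(\alpha_{P_{0}}I)\succeq\beta I$---but as written it rests on a monotone-descent property of $f_{1}$ above $P^{\ast}$ that the matrix Riccati operator does not enjoy.
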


\subsection{Some approximation results}
\label{key_approx} In this subsection we present several
approximation results to be used in the sequel. The results are of
independent interest and establish some useful properties of the
RRE and the classical Riccati operator.

The first concerns uniform convergence properties of the classical
Riccati operator and is used in the sequel to obtain various
tightness estimates required for establishing the MDP. The proof is
provided in Appendix~\ref{proof_approx12}.
\begin{lemma}
\label{unif_conv}For every $\varepsilon>0$, there exists
$t_{\varepsilon}\geq N$, such that, for every
$X\in\mathbb{S}_{+}^{N}$, with $X\succeq P^{\ast}$,
\begin{equation}
\label{unif_conv1}
\left\|f_{1}^{t}\left(X\right)-P^{\ast}\right\|\leq\varepsilon,~~~t\geq
t_{\varepsilon}
\end{equation}
Note, in particular, that $t_{\varepsilon}$ can be chosen
independently of the initial state $X$.
\end{lemma}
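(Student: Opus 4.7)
The plan is to combine monotonicity of the classical Riccati iteration $f_1$ with a uniform (in $X$) upper bound on $f_1^N(X)$, and then sandwich $f_1^t(X)$ between $P^*$ and the monotone image of this upper bound, which converges to $P^*$ by the classical Riccati convergence. Concretely, the standard closed-loop rewriting $f_1(X) = (A - K_X C) X (A - K_X C)^T + K_X R K_X^T + Q$ with $K_X = A X C^T(CXC^T + R)^{-1}$, together with the optimality of $K_X$, implies that $f_1$ is monotone in the Loewner order on the set $\{Y : Y \succeq P^*\}$ and fixes $P^*$; iterating, $f_1^t$ is monotone on the same set.

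The principal obstacle is to produce a matrix $\tilde{P} \in \mathbb{S}^N_+$ depending only on $A$, $C$, $Q$, $R$, and $N$, such that $f_1^N(X) \preceq \tilde{P}$ for every $X \succeq P^*$ (in fact for every $X \succeq 0$). The cleanest route is via the filtering interpretation: $f_1^N(X)$ is the m.m.s.e.\ covariance of $\mathbf{x}_N$ given $(\mathbf{y}_0, \ldots, \mathbf{y}_{N-1})$ with prior covariance $X$. Replacing the optimal estimator by any linear estimator that discards the prior and reconstructs $\mathbf{x}_0$ from the $N$ stacked observations via the pseudoinverse of the observability matrix (whose block rows are $C, CA, \ldots, CA^{N-1}$) gives a Loewner upper bound on $f_1^N(X)$; by assumption (E) this matrix has full column rank, so the resulting error covariance $\tilde{P}$ is finite and depends only on $A, C, Q, R, N$. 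Equivalently, the information-form recursion $f_1(X) = A(X^{-1} + C^T R^{-1} C)^{-1} A^T + Q$ (valid since $X \succeq P^* \succ 0$ under assumption (E), because $Q \succ 0$) can be unrolled $N$ steps to show $f_1^N(X)^{-1}$ absorbs a contribution bounded below by a term built from the $N$-step observability Gramian, independently of $X$.

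With monotonicity and this uniform bound in hand, the lemma follows quickly. For $t \geq N$ and any $X \succeq P^*$, monotonicity yields
$$P^* = f_1^{t-N}(P^*) \preceq f_1^{t-N}(f_1^N(X)) = f_1^t(X) \preceq f_1^{t-N}(\tilde{P}).$$
The classical Riccati convergence (used in the paper immediately after eq.~(\ref{sys_model2})) gives $f_1^k(\tilde{P}) \to P^*$ as $k \to \infty$. Hence, given $\varepsilon > 0$, one can choose $t_\varepsilon \geq N$ so that $\|f_1^{t-N}(\tilde{P}) - P^*\| \leq \varepsilon$ for all $t \geq t_\varepsilon$. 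Since $0 \preceq f_1^t(X) - P^* \preceq f_1^{t-N}(\tilde{P}) - P^*$ in the Loewner order and the induced 2-norm is monotone on $\mathbb{S}^N_+$, we conclude $\|f_1^t(X) - P^*\| \leq \varepsilon$ uniformly in $X \succeq P^*$, as required.
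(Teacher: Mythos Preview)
Your proof is correct. Both your argument and the paper's rest on the same first step: observability forces a uniform bound $f_{1}^{N}(X)\preceq\tilde{P}$ (equivalently, $\|f_{1}^{N}(X)\|\leq\alpha_{1}$) valid for every initial $X$, which the paper obtains by citing Lemma~7.1 of Jazwinski, while you sketch the underlying suboptimal-estimator/observability-Gramian reason explicitly.

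The two arguments diverge in the second step. The paper invokes the exponential contraction estimate $\|f_{1}^{t}(X_{1})-f_{1}^{t}(X_{2})\|\leq c_{1}e^{-c_{2}t}\|X_{1}-X_{2}\|$ (Theorem~7.5 in Jazwinski), applies it with $X_{2}=P^{\ast}$ over the compact set $\{X:\|X\|\leq\alpha_{1}\}$, and then shifts by $N$. You instead use Loewner monotonicity of $f_{1}$ to sandwich $P^{\ast}\preceq f_{1}^{t}(X)\preceq f_{1}^{t-N}(\tilde{P})$ and appeal only to the qualitative Riccati convergence $f_{1}^{k}(\tilde{P})\rightarrow P^{\ast}$. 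Your route is more elementary and self-contained, avoiding the contraction theorem entirely. The trade-off is that the paper's proof delivers, as a by-product, the quantitative inequality~(\ref{unif_conv3}), which is immediately reused in Lemma~\ref{Lip} to get the Lipschitz constant $c_{1}e^{-c_{2}t}$ for $f_{1}^{t}$; your argument proves the lemma as stated but does not supply that rate, so if you adopt it you would need a separate source for the Lipschitz estimates downstream.
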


The following result can be viewed as a corollary to
Lemma~\ref{unif_conv} and concerns the Lipschitz continuity of
finite compositions of the Riccati operator.
\begin{lemma}
\label{Lip} For fixed $t\in\mathbb{N}$ and
$i_{1},\cdots,i_{t}\in\{0,1\}$, define the function
$g:\mathbb{S}_{+}^{N}\longmapsto\mathbb{S}_{+}^{N}$ by
\begin{equation}
\label{Lip1} g(X)=f_{i_{1}}\circ\cdots\circ
f_{i_{t}}(X),~~~X\in\mathbb{S}_{+}^{N}
\end{equation}
Then $g(\cdot)$ is Lipschitz continuous with some constant
$K_{g}>0$.

Also, for every $\varepsilon_{2}>0$, there exists
$t_{\varepsilon_{2}}$, such that, the function
$f_{1}^{t_{\varepsilon_{2}}}(\cdot)$ is Lipschitz continuous with
constant $K_{f_{1}^{t_{\varepsilon_{2}}}}<\varepsilon_{2}$.
\end{lemma}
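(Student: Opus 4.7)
The plan is to establish both assertions by analyzing the Fr\'echet derivatives of the elementary maps $f_0$ and $f_1$ and then combining them with the uniform contraction supplied by Lemma~\ref{unif_conv}. First, $f_0(X) = AXA^T + Q$ is affine and hence globally Lipschitz with constant $\|A\|^2$. For $f_1$, using the information-form representation $f_1(X) = A(X^{-1}+C^T R^{-1} C)^{-1}A^T + Q$ on $\mathbb{S}^{N}_{++}$ (extended continuously to $\mathbb{S}^{N}_{+}$), one computes
\begin{equation*}
Df_1(X)[H] \;=\; A\,(I+XK)^{-1}\,H\,(I+KX)^{-1}\,A^T,\qquad K := C^T R^{-1} C .
\end{equation*}
On the forward-invariant cone $\{X\succeq P^{\ast}\}$ (which by Theorem~\ref{weakconv1} carries all iterates relevant to the RRE), $\|(I+XK)^{-1}\|$ admits a uniform bound via monotonicity and the observability of $(C,A)$, yielding a Lipschitz constant $L_1$ for $f_1$. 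Applying the chain rule to $g = f_{i_1}\circ\cdots\circ f_{i_t}$ then gives $K_g \leq \max\{\|A\|^2,\,L_1\}^{t}$.

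For the second assertion, I would exploit the contractive nature of $f_1$ at $P^{\ast}$. A direct computation yields $Df_1(P^{\ast})[H] = A_{\mathrm{cl}}\,H\,A_{\mathrm{cl}}^T$ with $A_{\mathrm{cl}} = A - AP^{\ast}C^T(CP^{\ast}C^T+R)^{-1}C$, the Kalman closed-loop matrix; classical DARE theory guarantees $\rho(A_{\mathrm{cl}})<1$ under assumption~\textbf{(E)}, so that the linear map $Df_1(P^{\ast})$ on $\mathbb{S}^{N}$ has spectral radius $\rho(A_{\mathrm{cl}})^2 < 1$. By continuity there exist $\rho_{0} < 1$ and $\delta > 0$ with $\|Df_1(Z)\|\leq\rho_{0}$ for $Z\in\overline{B}_{\delta}(P^{\ast})$. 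Given $\varepsilon_{2}>0$, Lemma~\ref{unif_conv} supplies $T_{\delta}$ such that $f_1^{k}(X)\in\overline{B}_{\delta}(P^{\ast})$ for every $k\geq T_{\delta}$ and every $X\succeq P^{\ast}$; the chain rule then yields
\begin{equation*}
\bigl\|Df_1^{\,t}(X)\bigr\|\;\leq\;K_{f_1^{T_{\delta}}}\cdot\rho_{0}^{\,t-T_{\delta}},
\end{equation*}
which is $<\varepsilon_{2}$ once $t = t_{\varepsilon_{2}}$ is taken sufficiently large. Integrating along the segment from $X$ to $Y$ via the mean-value theorem then delivers the claimed Lipschitz constant bound.

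The main obstacle is the uniform control of $\|(I+XK)^{-1}\|$ when $K = C^T R^{-1} C$ is rank deficient: a naive operator-norm bound fails along sequences approaching degenerate points of $\partial\mathbb{S}^{N}_{+}$ with mass in $\mathrm{null}(K)$. The remedy is to confine attention to the forward-invariant cone $\{X\succeq P^{\ast}\}$ where $f_1$ is monotone and, by Lemma~\ref{unif_conv}, drives all iterates into a fixed bounded neighborhood of $P^{\ast}$; on such a set the relevant derivatives are uniformly bounded, so that the composition argument and the geometric contraction argument both go through.
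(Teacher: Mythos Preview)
Your route is genuinely different from the paper's, and it carries a real gap in the first assertion.

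The paper does almost no work here: it simply reads both claims off of the quantitative estimate~(\ref{unif_conv3}) established inside the proof of Lemma~\ref{unif_conv}, namely $\|f_{1}^{t}(X_{1})-f_{1}^{t}(X_{2})\|\leq c_{1}e^{-c_{2}t}\|X_{1}-X_{2}\|$ for all $X_{1},X_{2}\in\mathbb{S}_{+}^{N}$. Taking $t=1$ gives a global Lipschitz constant $K_{f_{1}}=c_{1}e^{-c_{2}}$ for $f_{1}$; combined with $K_{f_{0}}=\|A\|^{2}$ this yields Lipschitz continuity of any finite composition $g$. For the second assertion one just picks $t_{\varepsilon_{2}}$ with $c_{1}e^{-c_{2}t_{\varepsilon_{2}}}<\varepsilon_{2}$. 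No derivatives, no localization, no spectral‐radius argument at $P^{\ast}$.

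By contrast, your derivative approach needs a uniform bound on $\|(I+XK)^{-1}\|$ (equivalently on $\|Df_{1}(X)\|$) over the unbounded cone $\{X\succeq P^{\ast}\}$, and this bound does not hold when $K=C^{T}R^{-1}C$ is rank deficient, even under observability of $(C,A)$. For instance, with $N=2$, $A$ a permutation, $C=(0\ 1)$, $R=1$, $Q=I$ (so $(C,A)$ is observable, $(A,Q^{1/2})$ controllable), the family $X_{n}=\begin{pmatrix} n & \sqrt{n}\\ \sqrt{n} & 2\end{pmatrix}\succeq cI$ for some $c>0$ and all large $n$, yet $(I+X_{n}K)^{-1}=\frac{1}{3}\begin{pmatrix}3 & -\sqrt{n}\\ 0 & 1\end{pmatrix}$ has norm $\asymp\sqrt{n}$, and a direct computation shows $\|Df_{1}(X_{n})\|\asymp n$. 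Your stated remedy --- using Lemma~\ref{unif_conv} to land in a bounded neighborhood of $P^{\ast}$ --- controls the \emph{tail} of an $f_{1}$-orbit, but it does not bound the derivative of a single $f_{1}$ (or of an arbitrary composition $g=f_{i_{1}}\circ\cdots\circ f_{i_{t}}$) at the initial point $X$, which is exactly what the chain-rule estimate $K_{g}\leq\max\{\|A\|^{2},L_{1}\}^{t}$ requires. So the first assertion is not actually established by your argument. Your treatment of the second assertion (local contraction near $P^{\ast}$ via $\rho(A_{\mathrm{cl}})<1$, then globalization through Lemma~\ref{unif_conv}) is sound in spirit, but again the paper gets it for free from~(\ref{unif_conv3}).
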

\begin{proof}
From (\ref{unif_conv3}) it follows that the function
$f_{1}(\cdot)$ is Lipschitz continuous with constant
$K_{f_{1}}=c_{1}e^{-c_{2}}$, where $c_{1},c_{2}$ are positive
constants defined in Lemma~\ref{unif_conv}. It is also easy to see
that the affine function $f_{0}(\cdot)$ is Lipschitz continuous
with constant $K_{f_{0}}=\alpha^{2}$, where $\alpha$ is the
largest singular value of the matrix $A$.

It then follows that the function $g(\cdot)$ defined above is
Lipschitz continuous, being a finite composition of Lipschitz
continuous functions.

For the second assertion, choose $t_{\varepsilon_{2}}\in\mathbb{N}$,
such that, $c_{1}e^{-c_{2}t_{\varepsilon_{2}}}<\varepsilon_{2}$,
where $c_{1},c_{2}>0$ are defined in Lemma~\ref{unif_conv}, equation~(\ref{unif_conv3}). It then follows from
(\ref{unif_conv3}) that, with the above choice of
$t_{\varepsilon_{2}}$, the function
$f_{1}^{t_{\varepsilon_{2}}}(\cdot)$ is Lipschitz continuous with
constant $K_{f_{1}}^{t_{\varepsilon_{2}}}<\varepsilon_{2}$.
\end{proof}

The following result concerns stochastic boundedness of the random
sequence $\{P_{t}\}_{t\in\mathbb{T}_{+}}$ generated by the RRE. In particular, it completes the
proof of Proposition 8 in~\cite{Riccati-weakconv} by establishing triviality of $\overline{\gamma}^{\sbb}$ for general observable and controllable systems (in~\cite{Riccati-weakconv}, Proposition 8 was proved only for systems with invertible $C$.)
\begin{lemma}
\label{cr_sb} Assume $\left(A,Q^{1/2}\right)$ controllable,
$(C,A)$ observable. Then $\overline{\gamma}^{\sbb}=0$, i.e., the sequence $\{P_{t}\}$ is stochastically bounded
\begin{equation}
\label{ref1000}
\lim_{M\rightarrow\infty}\sup_{t\in\mathbb{T}_{+}}\mathbb{P}^{\overline{\gamma},P_{0}}\left(\left\|P_{t}\right\|>M\right)
= 0
\end{equation}
for every $\overline{\gamma}>0$ and initial covariance state $P_{0}\in\mathbb{S}_{+}^{N}$.
\end{lemma}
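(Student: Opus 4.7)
The plan combines the uniform contraction of the Riccati operator near $P^{*}$ (Lemma~\ref{unif_conv}) with a renewal-type estimate on the Bernoulli arrival process. Heuristically, once a long stretch of consecutive observations occurs, $P_{t}$ is driven into a deterministic bounded set regardless of the past; between such stretches, $P_{t}$ grows no faster than $f_{0}^{k}$, and the probability of a long gap between success bursts decays exponentially.

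Step 1 (uniform post-observation bound). First I would extend Lemma~\ref{unif_conv} from initial states $X \succeq P^{*}$ to arbitrary $X \in \mathbb{S}^{N}_{+}$. The Riccati operator $f_{1}$ is monotone in the $\preceq$-order (evident from its min-quadratic representation $f_{1}(X) = \min_{K}\{(A-KC)X(A-KC)^{T} + KRK^{T} + Q\}$), so $X \preceq X + P^{*}$ yields $f_{1}^{t}(X) \preceq f_{1}^{t}(X + P^{*})$. Since $X + P^{*} \succeq P^{*}$, Lemma~\ref{unif_conv} (taking $\varepsilon = 1$) fixes an integer $t_{1} \geq N$ and a constant $c_{0} > 0$ with $f_{1}^{t_{1}}(X) \preceq c_{0}I$ for every $X \in \mathbb{S}^{N}_{+}$. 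In particular, any length-$t_{1}$ block of consecutive observations forces $P_{t}$ into the deterministic set $\{X : X \preceq c_{0}I\}$, independently of the prior history.

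Step 2 (pathwise bound and renewal estimate). Given a sample path $\{\gamma_{s}\}$, let $\tau_{t}$ be the largest $s \in [t_{1},t]$ with $\gamma_{s-1} = \cdots = \gamma_{s-t_{1}} = 1$, and set $\tau_{t} = -\infty$ otherwise. On $\{\tau_{t} \geq t_{1}\}$, Step~1 gives $P_{\tau_{t}} \preceq c_{0}I$, and since $f_{1}(X) \preceq f_{0}(X)$ for all $X \succeq 0$ (immediate from (\ref{map_f0})-(\ref{map_f1})), the evolution from $\tau_{t}$ to $t$ is dominated by $P_{t} \preceq f_{0}^{t-\tau_{t}}(c_{0}I)$, giving $\|P_{t}\| \leq C(1 + \|A\|^{2(t - \tau_{t})})$ for a constant $C = C(A,Q,c_{0})$. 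Fix $M > 0$ and define $K(M)$ as the largest integer $k$ with $C(1 + \|A\|^{2k}) \leq M$; note $K(M) \uparrow \infty$ as $M \to \infty$. Then $\{\|P_{t}\| > M\} \subset \{\tau_{t} < t_{1}\} \cup \{\tau_{t} \geq t_{1},\; t - \tau_{t} > K(M)\}$, and both of these events are contained in the event ``no all-ones $t_{1}$-block ends in the window $[\max(t_{1}, t - K(M) + 1), t]$''. Partitioning that window into disjoint blocks of length $t_{1}$ and invoking the i.i.d. Bernoulli structure bounds this probability by $(1 - \overline{\gamma}^{t_{1}})^{\lfloor K(M)/t_{1} \rfloor}$, uniformly in $t$ for $t \geq t_{1} + K(M)$. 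For the finitely many small $t$, Proposition~\ref{string_prop}(iii) supplies a deterministic upper bound $P_{t} \preceq f_{0}^{t}(\alpha_{P_{0}}I)$, which lies below $M$ once $M$ is large enough (depending on $P_{0}$). Letting $M \to \infty$ yields (\ref{ref1000}).

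The main obstacle is Step~1: Lemma~\ref{unif_conv} in its stated form only guarantees uniform contraction from initial states dominating $P^{*}$, whereas the renewal argument must tolerate arbitrary $P_{\tau_{t} - t_{1}} \in \mathbb{S}^{N}_{+}$ (there is no a priori reason for $P_{s} \succeq P^{*}$ during the transient, especially when $P_{0} \prec P^{*}$). The monotonicity of $f_{1}$ together with the trivial comparison $X \preceq X + P^{*}$ is what bridges the gap at essentially no cost, transferring uniform contractivity from $\{X : X \succeq P^{*}\}$ to all of $\mathbb{S}^{N}_{+}$. Everything else reduces to a standard first-passage estimate on a two-state i.i.d. process and is routine.
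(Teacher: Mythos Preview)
Your strategy matches the paper's: a uniform post-burst bound, domination of inter-burst growth by $f_{0}^{k}$, and a geometric tail estimate on the burst gap. Step~1 is correct --- the monotonicity comparison $X \preceq X + P^{\ast}$ cleanly transfers Lemma~\ref{unif_conv} to all of $\mathbb{S}_{+}^{N}$ (the paper in fact sidesteps this by citing directly the Jazwinski bound $f_{1}^{t}(X) \preceq \kappa I$ for all $X$ and $t \geq N$, eqn.~(\ref{cr_sb2}), so what you flag as the ``main obstacle'' is already available).

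The actual gap is in the small-$t$ handling at the end of Step~2. The range you call ``finitely many small $t$'' is $t < t_{1} + K(M)$, which \emph{grows} with $M$; at the top of this range your deterministic bound $\|f_{0}^{t}(\alpha_{P_{0}}I)\|$ is of order $\alpha_{P_{0}}\|A\|^{2(t_{1}+K(M))} \approx (\alpha_{P_{0}}\|A\|^{2t_{1}}/C)\,M$ by the very definition of $K(M)$, and there is no reason for the prefactor to be at most $1$, so the claim ``lies below $M$'' fails in general. The paper's remedy is simple and costs nothing: set $\tau_{t} = 0$ (rather than $-\infty$) when no $t_{1}$-burst has yet occurred, and enlarge the post-burst constant to $c_{0}' = \max\{c_{0}, \|P_{0}\|\}$ from the outset. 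Then $P_{\tau_{t}} \preceq c_{0}' I$ holds in both cases, the inclusion $\{\|P_{t}\| > M\} \subset \{t - \tau_{t} > K'(M)\}$ is valid for \emph{every} $t \geq 0$, and for $t \leq K'(M)$ the right-hand event is empty since $0 \leq \tau_{t} \leq t$. No separate small-$t$ argument is needed, and your block-partition estimate then covers all remaining $t$ uniformly.
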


The proof is provided in Appendix~\ref{proof_stoch_bound_er} and offers a new insight into the random Riccati equation.
For a discussion on the consequence and significance of the result, the reader is referred to the text following Proposition 8 in~\cite{Riccati-weakconv}. We reemphasize here, that the result
establishes the importance of stochastic boundedness as a
metric for system stability and design as compared to various
notions of moment stability. For example, as shown
in~\cite{Bruno}, the critical probability
for mean stability may be quite large,
depending on the instability of $A$. However, we show that, even in the sub-mean stability regime, the system is
stochastically bounded (for $\overline{\gamma}>0$) and converges to a
unique invariant distribution. Hence, our analysis offers insight into
system design in the sub-mean stability regime.

The following result on limits of real number sequences will be
useful later.
\begin{proposition}
\label{lim_num} For $J\in\mathbb{N}$ and $1\leq i\leq J$, let
$a_{i}:[0,1)\longmapsto [0,1]$ be functions with
\begin{equation}
\label{lim_num1} \lim_{\overline{\gamma}\uparrow 1} -\frac{\ln
a_{i}(\overline{\gamma})}{\ln(1-\overline{\gamma})}=-a_{i}^{\ast},~~~~1\leq
i\leq J
\end{equation}
(we adopt the convention $\ln 0=-\infty$ and $a_{i}^{\ast}$ is
non-negative with $\infty$ as a possible value.  Then
\begin{equation}
\label{lim_num2} \lim_{\overline{\gamma}\uparrow 1}
-\frac{\ln\left(\sum_{i=1}^{J}a_{i}(\overline{\gamma})\right)}{\ln(1-\overline{\gamma})}=-\min_{i\in\{0,\cdots,J\}}\left\{a_{i}^{\ast}\right\}
\end{equation}
\end{proposition}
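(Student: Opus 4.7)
The plan is to reduce the proposition to a standard squeeze argument, by bounding the finite sum above and below by its largest term (up to the harmless multiplicative constant $J$), and then exploiting the monotonicity of $\ln$ together with the fact that $\ln(1-\overline{\gamma})<0$ to translate logarithmic bounds into the desired ratios. I would first set $L(\overline{\gamma}):=-\ln(1-\overline{\gamma})>0$, a quantity tending to $+\infty$ as $\overline{\gamma}\uparrow 1$, so the hypothesis reads $\ln a_i(\overline{\gamma})/L(\overline{\gamma})\to -a_i^\ast$. Fix $m:=\min_{1\le i\le J}a_i^\ast$, which is an element of $[0,\infty]$, and choose an index $i_0$ attaining it.

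For the lower bound on the desired limit, I use that $a_i\ge 0$ implies $\sum_i a_i(\overline{\gamma})\ge a_{i_0}(\overline{\gamma})$. Taking logs and dividing by $L(\overline{\gamma})>0$ preserves the inequality:
\begin{equation*}
\frac{\ln\sum_{i=1}^J a_i(\overline{\gamma})}{L(\overline{\gamma})}\;\ge\;\frac{\ln a_{i_0}(\overline{\gamma})}{L(\overline{\gamma})}\longrightarrow -m.
\end{equation*}
Passing to $\liminf$ and rewriting in terms of $\ln(1-\overline{\gamma})$ gives $\liminf_{\overline{\gamma}\uparrow 1}-\ln\sum a_i/\ln(1-\overline{\gamma})\ge -m$.

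For the upper bound, I translate the hypothesis into a polynomial envelope: for any $\delta>0$ and every sufficiently large $\overline{\gamma}$, one has $a_i(\overline{\gamma})\le(1-\overline{\gamma})^{a_i^\ast-\delta}$ when $a_i^\ast<\infty$, and $a_i(\overline{\gamma})\le(1-\overline{\gamma})^{K}$ for any prescribed $K$ when $a_i^\ast=\infty$. In either case, eventually $a_i(\overline{\gamma})\le(1-\overline{\gamma})^{m-\delta}$, so
\begin{equation*}
\sum_{i=1}^J a_i(\overline{\gamma})\;\le\;J\,(1-\overline{\gamma})^{m-\delta},
\qquad
\frac{\ln\sum a_i(\overline{\gamma})}{L(\overline{\gamma})}\;\le\;\frac{\ln J}{L(\overline{\gamma})}-(m-\delta).
\end{equation*}
Letting $\overline{\gamma}\uparrow 1$ yields $\limsup\le -(m-\delta)$, and sending $\delta\downarrow 0$ closes the sandwich.

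There is no genuine obstacle; the proof is essentially bookkeeping with signs. The only point requiring a moment's care is the case $m=\infty$: then the lower bound is vacuous, and in the upper bound I let the envelope exponent $K$ be arbitrarily large, concluding $\limsup\le -K$ for every $K$, hence $-\infty=-m$, as required. The convention $\ln 0=-\infty$ is consistent with this, since $a_i\equiv 0$ forces $a_i^\ast=\infty$ under the stated limit.
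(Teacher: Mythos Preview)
Your proof is correct. The paper actually states this proposition without proof, treating it as an elementary fact about real-valued sequences; your squeeze argument via $a_{i_0}(\overline{\gamma})\le\sum_i a_i(\overline{\gamma})\le J\max_i a_i(\overline{\gamma})$ is exactly the standard justification, and your handling of the $m=\infty$ case is sound.
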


\section{Main Results and Discussions}
\label{main_res}

We state the main results of the paper in this section whose proofs are provided in Section~\ref{proof_theorem}.

The following result is a first step to understanding the behavior of the family $\{\mathbb{\mu}^{\overline{\gamma}}\}$ of invariant distributions.
\begin{theorem}
\label{thm:convrate}
The family of invariant distributions $\left\{\mathbb{\mu}^{\overline{\gamma}}\right\}$ converges weakly to the Dirac probability measure $\delta_{P^{\ast}}$ as $\overline{\gamma}\uparrow 1$, i.e.,
\begin{equation}
\label{thm:convrate1}
\lim_{\overline{\gamma}\uparrow 1}d_{P}\left(\mathbb{\mu}^{\overline{\gamma}},\delta_{P^{\ast}}\right)=0
\end{equation}
We have the following convergence rate asymptotics:

For every $\varepsilon>0$, we have
\begin{equation}
\label{thm:convrate4}
\limsup_{\overline{\gamma}\uparrow 1}-\frac{\ln\left(\mathbb{\mu}^{\overline{\gamma}}\left(B^{C}_{\varepsilon}(P^{\ast})\right)\right)}{\ln(1-\overline{\gamma})}\leq -1
\end{equation}
\end{theorem}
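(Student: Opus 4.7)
The plan is to exploit invariance of $\mathbb{\mu}^{\overline{\gamma}}$ together with the uniform contraction of the classical Riccati iteration on the shifted cone $\{Y\succeq P^{\ast}\}$ from Lemma~\ref{unif_conv}. The fundamental observation is that under stationarity the current RRE state equals $f_1^{t_{\varepsilon}}$ applied to an earlier state $\succeq P^{\ast}$ whenever the last $t_{\varepsilon}$ Bernoulli arrivals are all one; and this event has probability exactly $\overline{\gamma}^{t_{\varepsilon}}$, which tends to~$1$ at the linear rate $1-\overline{\gamma}$. Combining these two facts produces a single quantitative bound on $\mathbb{\mu}^{\overline{\gamma}}(B^{C}_{\varepsilon}(P^{\ast}))$ from which both assertions follow.

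Concretely, I would fix $\varepsilon>0$ and first pick $\varepsilon'\in(0,\varepsilon)$ with $\overline{B}_{\varepsilon'}(P^{\ast})\subset B_{\varepsilon}(P^{\ast})$; Lemma~\ref{unif_conv} then supplies $t_{\varepsilon'}\in\mathbb{N}$ such that $\|f_{1}^{t_{\varepsilon'}}(X)-P^{\ast}\|\leq\varepsilon'$ for every $X\in\mathbb{S}_{+}^{N}$ with $X\succeq P^{\ast}$. Initialize the RRE with $P_{0}\sim\mathbb{\mu}^{\overline{\gamma}}$ on a probability space carrying an i.i.d.~$\mathrm{Bernoulli}(\overline{\gamma})$ sequence $\{\gamma_{t}\}$ independent of $P_{0}$. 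Because $\mathbb{\mu}^{\overline{\gamma}}(\{Y\succeq P^{\ast}\})=1$ by~\eqref{main_th1}, the event $A:=\{\gamma_{0}=\cdots=\gamma_{t_{\varepsilon'}-1}=1\}\cap\{P_{0}\succeq P^{\ast}\}$ has probability $\overline{\gamma}^{t_{\varepsilon'}}$; on $A$ one has $P_{t_{\varepsilon'}}=f_{1}^{t_{\varepsilon'}}(P_{0})\in\overline{B}_{\varepsilon'}(P^{\ast})\subset B_{\varepsilon}(P^{\ast})$. Invariance of $\mathbb{\mu}^{\overline{\gamma}}$ gives $P_{t_{\varepsilon'}}\sim\mathbb{\mu}^{\overline{\gamma}}$, so that
\begin{equation*}
\mathbb{\mu}^{\overline{\gamma}}\bigl(B^{C}_{\varepsilon}(P^{\ast})\bigr)\;\leq\;1-\overline{\gamma}^{t_{\varepsilon'}}.
\end{equation*}

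From this single inequality both statements of the theorem fall out with elementary manipulations. Weak convergence $\mathbb{\mu}^{\overline{\gamma}}\Longrightarrow\delta_{P^{\ast}}$ (equivalently $d_{P}(\mathbb{\mu}^{\overline{\gamma}},\delta_{P^{\ast}})\to 0$) is immediate since $1-\overline{\gamma}^{t_{\varepsilon'}}\to 0$ for every fixed $\varepsilon$, and weak convergence to a point mass is equivalent to the vanishing of the mass of every complement $B^{C}_{\varepsilon}(P^{\ast})$. For the rate assertion~\eqref{thm:convrate4}, I would write $1-\overline{\gamma}^{t_{\varepsilon'}}=t_{\varepsilon'}(1-\overline{\gamma})(1+o(1))$, take logarithms, and divide by the negative quantity $\ln(1-\overline{\gamma})$; the lower-order terms vanish as $\overline{\gamma}\uparrow 1$, yielding $\liminf_{\overline{\gamma}\uparrow 1}\ln\mathbb{\mu}^{\overline{\gamma}}(B^{C}_{\varepsilon}(P^{\ast}))/\ln(1-\overline{\gamma})\geq 1$, which is exactly~\eqref{thm:convrate4}. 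I do not expect genuine obstacles here: the analytical work is already packaged into Lemma~\ref{unif_conv} and into the support statement~\eqref{main_th1} of Theorem~\ref{weakconv1}. The only small technicality, handled by introducing $\varepsilon'$ rather than $\varepsilon$ at the outset, is that Lemma~\ref{unif_conv} delivers iterates in a closed ball while the theorem asks about the complement of an open one.
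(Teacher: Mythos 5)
Your proposal is correct, and it proves the theorem by a genuinely different and more elementary route than the paper. Your entire argument rests on the single stationarity bound $\mathbb{\mu}^{\overline{\gamma}}\left(B^{C}_{\varepsilon}(P^{\ast})\right)\leq 1-\overline{\gamma}^{t_{\varepsilon'}}$, obtained by drawing $P_{0}\sim\mathbb{\mu}^{\overline{\gamma}}$, invoking the support statement~\eqref{main_th1} and the uniform contraction of Lemma~\ref{unif_conv} on the event of $t_{\varepsilon'}$ consecutive arrivals, and using invariance to identify the law of $P_{t_{\varepsilon'}}$ with $\mathbb{\mu}^{\overline{\gamma}}$; both assertions then follow by elementary limit manipulations (the sign bookkeeping in~\eqref{thm:convrate4} checks out, since dividing $\ln\mathbb{\mu}^{\overline{\gamma}}(B^{C}_{\varepsilon})\leq\ln\left(1-\overline{\gamma}^{t_{\varepsilon'}}\right)$ by the negative quantity $\ln(1-\overline{\gamma})$ gives $\liminf\geq 1$, i.e., $\limsup$ of the negated ratio $\leq -1$). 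The paper proceeds differently: the weak-convergence part is proved through the Prohorov metric together with Corollary~\ref{cor:lower} applied to the trivial string $\mathcal{R}=P^{\ast}$ (which yields the complementary bound $\mathbb{\mu}^{\overline{\gamma}}(B_{\varepsilon_{0}}(P^{\ast}))\geq\overline{\gamma}^{t_{0}}$, but derived from weak convergence of the chain started at a fixed $P_{0}$ rather than from stationarity), and the rate assertion is deduced from the full MDP upper bound for closed sets (Lemma~\ref{MDP_upper}), which in turn requires Lemma~\ref{upper}, the tightness estimate of Lemma~\ref{tight}, and the string-topology Lemma~\ref{top_string}. Your route buys simplicity and independence from the MDP apparatus (no circularity, since invariance and~\eqref{main_th1} come from Theorem~\ref{weakconv1}); the paper's route is essentially free once that apparatus is built and is more general, since the MDP upper bound covers arbitrary closed sets and yields exponents larger than $1$ when the rare event is farther from $P^{\ast}$. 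Two cosmetic points you should make explicit in a write-up: dispose of the degenerate case $\mathbb{\mu}^{\overline{\gamma}}\left(B^{C}_{\varepsilon}(P^{\ast})\right)=0$ via the convention $\ln 0=-\infty$ (the bound then holds trivially), and give the one-line portmanteau argument that $\mathbb{\mu}^{\overline{\gamma}}\left(B^{C}_{\varepsilon}(P^{\ast})\right)\rightarrow 0$ for every $\varepsilon>0$ implies $d_{P}\left(\mathbb{\mu}^{\overline{\gamma}},\delta_{P^{\ast}}\right)\rightarrow 0$.
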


We discuss the consequences of Theorem~\ref{thm:convrate}. The first assertion states that the family $\{\mathbb{\mu}^{\overline{\gamma}}\}$ converges weakly to the Dirac measure concentrated at $P^{\ast}$, $\delta_{P^{\ast}}$, as $\overline{\gamma}\uparrow 1$. This is quite intuitive, as with $\overline{\gamma}\uparrow 1$, the filtering problem reduces to the classical Kalman filtering setup with deterministic packet arrival (no dropouts), in which case the deterministic sequence of conditional error covariances converges to the fixed point $P^{\ast}$. Thus, as $\overline{\gamma}\uparrow 1$, we expect the RRE sequence to behave more and more similarly to the deterministic $\overline{\gamma}=1$ case leading to the convergence of the measures $\mathbb{\mu}^{\overline{\gamma}}$ to $\delta_{P^{\ast}}$ as $\overline{\gamma}\uparrow 1$. However, from a technical point of view this is not obvious, as the case $\overline{\gamma}=1$ may be a singularity. Theorem~\ref{thm:convrate} rules out this possibility and shows that the family $\{\mathbb{\mu}^{\overline{\gamma}}\}$ viewed as a function of $\overline{\gamma}$ is sufficiently well behaved (continuous) at $\overline{\gamma}=1$.

An immediate consequence of Theorem~\ref{thm:convrate} is the following:
\begin{equation}
\label{mainres1}
\lim_{\overline{\gamma}\uparrow 1}\mathbb{\mu}^{\overline{\gamma}}(\Gamma)=0,~~~\forall~\overline{\Gamma}\cap P^{\ast}=\phi
\end{equation}
Thus, we note that, w.r.t. $\{\mathbb{\mu}^{\overline{\gamma}}\}$, every event $\Gamma$ with $P^{\ast}\notin\overline{\Gamma}$ is a rare event (see Defn.~(\ref{moddev3}).) This is intuitively clear, because as $\overline{\gamma}\uparrow 1$, the measures $\mathbb{\mu}^{\overline{\gamma}}$ become concentrated on arbitrarily small neighborhoods of $P^{\ast}$, making such an event $\Gamma$ very difficult to observe.

Once the rare events are identified, the next step in the characterization of $\{\mathbb{\mu}^{\overline{\gamma}}\}$ is to ascertain the rate at which such rare events go to zero, or the rate at which the family $\{\mathbb{\mu}^{\overline{\gamma}}\}$ converges to $\delta_{P^{\ast}}$ as $\overline{\gamma}\uparrow 1$. The distance between the family $\{\mathbb{\mu}^{\overline{\gamma}}\}$ and its weak limit $\delta_{P^{\ast}}$ as $\overline{\gamma}\uparrow 1$ is important for the design engineer, as it relates the loss in performance when operating at packet arrival probability $\overline{\gamma}<1$. The answer is provided in the second assertion of Theorem~\ref{thm:convrate}, which says that, for every $\varepsilon>0$, the probability of staying away from the $\varepsilon$-neighborhood of $P^{\ast}$ decreases as $(1-\overline{\gamma})$ when $\overline{\gamma}\uparrow 1$, i.e.\footnote{For functions $h(\cdot),g(\cdot)$, the notation $h(\overline{\gamma})=O(g(\overline{\gamma}))$ implies the existence of a constant $c>0$, such that $h(\overline{\gamma})\leq cg(\overline{\gamma})$ for all $\overline{\gamma}\in (0,1)$.},
\begin{equation}
\label{main_res5}
\mathbb{\mu}^{\overline{\gamma}}=O(1-\overline{\gamma}),~~\forall~\varepsilon>0
\end{equation}
Since the above holds for every $\varepsilon>0$, the probability of any rare event vanishes at least as $(1-\overline{\gamma})$. Clearly, the exact rate of going to zero depends on the rare event in question (for example, how `far' it is from $P^{\ast}$, which becomes `typical' as $\overline{\gamma}\uparrow 1$.

A complete characterization of the family $\{\mathbb{\mu}^{\overline{\gamma}}\}$ requires the exact decay rate of rare events as $\overline{\gamma}\uparrow 1$, and this is achieved in the following result that establishes an MDP for the family $\{\mathbb{\mu}^{\overline{\gamma}}\}$ as $\overline{\gamma}\uparrow 1$.

\begin{theorem}
\label{thm:mdp}
Recall in equation~(\ref{concatS5}) the definition of~$\pi(\cdot)$. The family of invariant distributions $\left\{\mathbb{\mu}^{\overline{\gamma}}\right\}$ satisfies an MDP at scale $-\ln(1-\overline{\gamma})$ as $\overline{\gamma}\uparrow 1$ with a good rate function $I(\cdot)$, i.e.,
\begin{equation}
\label{thm:mdp1}
\liminf_{\overline{\gamma}\uparrow 1}-\frac{1}{\ln(1-\overline{\gamma})}\ln\mathbb{\mu}^{\overline{\gamma}}\left(\mathcal{O}\right)\geq -\inf_{X\in\mathcal{O}}I(X),~~~\mbox{for every open set $\mathcal{O}$}
\end{equation}
\begin{equation}
\label{thm:mdp2}
\limsup_{\overline{\gamma}\uparrow 1}-\frac{1}{\ln(1-\overline{\gamma})}\ln\mathbb{\mu}^{\overline{\gamma}}\left(\mathcal{F}\right)\leq -\inf_{X\in\mathcal{F}}I(X),~~~\mbox{for every closed set $\mathcal{F}$}
\end{equation}
where the function $I:\mathbb{S}_{+}^{N}\longmapsto\overline{\mathbb{R}}_{+}$ is given by:
\begin{equation}
\label{thm:mdp100}
I(X)=\inf_{\mathcal{R}\in\mathcal{S}^{P^{\ast}}}\pi(\mathcal{R}),~~~\forall X\in\mathbb{S}_{+}^{N}
\end{equation}
\end{theorem}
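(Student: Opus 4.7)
The strategy is to transfer the MDP for the finite-time distributions $\mu_T^{\overline{\gamma}, P^{\ast}}$ (to be established in Section~\ref{mod_finite}) to the invariant family $\{\mu^{\overline{\gamma}}\}$ by exploiting the concentration of $\mu^{\overline{\gamma}}$ near $P^{\ast}$ provided by Theorem~\ref{thm:convrate}. The key identity is invariance: for every $T\ge 1$,
\[
\mu^{\overline{\gamma}}(\Gamma)=\int_{\mathbb{S}_{+}^{N}}\mathbb{P}^{\overline{\gamma},P_{0}}(P_{T}\in\Gamma)\,d\mu^{\overline{\gamma}}(P_{0}),
\]
and Lemma~\ref{Lip} plus Theorem~\ref{thm:convrate} will be used to show that the dependence on $P_{0}$ is negligible at leading exponential order.

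For the upper bound on a closed set $\mathcal{F}$, split the integral into $B_{\delta}(P^{\ast})$ and its complement. Theorem~\ref{thm:convrate} bounds the complement contribution by $O(1-\overline{\gamma})$, corresponding to an exponent of at least $1$ under the $-\ln(1-\overline{\gamma})$ normalization. On $B_{\delta}(P^{\ast})$, Lemma~\ref{Lip} furnishes a Lipschitz constant $K_{T}$ for the $T$-fold function composition, so $\mathbb{P}^{\overline{\gamma},P_{0}}(P_{T}\in\mathcal{F})\le \mathbb{P}^{\overline{\gamma},P^{\ast}}(P_{T}\in\overline{\mathcal{F}_{K_{T}\delta}})$ for all $P_{0}\in B_{\delta}(P^{\ast})$, reducing matters to the finite-time MDP at $P^{\ast}$. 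That estimate is essentially combinatorial: expand
\[
\mathbb{P}^{\overline{\gamma},P^{\ast}}(P_{T}\in\Gamma)=\sum_{\mathcal{R}\in\mathcal{S}_{T}^{P^{\ast}},\,\mathcal{N}(\mathcal{R})\in\Gamma}(1-\overline{\gamma})^{\pi(\mathcal{R})}\overline{\gamma}^{T-\pi(\mathcal{R})};
\]
the dominant term is $(1-\overline{\gamma})^{k_{T}^{\ast}(\Gamma)}$ with $k_{T}^{\ast}(\Gamma)=\inf\{\pi(\mathcal{R}):\mathcal{R}\in\mathcal{S}_{T}^{P^{\ast}},\,\mathcal{N}(\mathcal{R})\in\Gamma\}$, and the $\binom{T}{k}$ string-counting prefactors are absorbed under the $-\ln(1-\overline{\gamma})$ normalization via Proposition~\ref{lim_num}.

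For the lower bound on an open set $\mathcal{O}$, fix $X\in\mathcal{O}$ with $I(X)<\infty$ and a near-optimal finite string $\mathcal{R}\in\mathcal{S}^{P^{\ast}}$ with $\mathcal{N}(\mathcal{R})=X$ and $\pi(\mathcal{R})$ arbitrarily close to $I(X)$. Choose $\delta>0$ small enough (by Lemma~\ref{Lip}) that applying the function-composition sequence of $\mathcal{R}$ to any $P_{0}\in B_{\delta}(P^{\ast})$ lands inside $\mathcal{O}$. Invariance and independence of the Bernoulli driving sequence then give
\[
\mu^{\overline{\gamma}}(\mathcal{O})\ge \mu^{\overline{\gamma}}\bigl(B_{\delta}(P^{\ast})\bigr)(1-\overline{\gamma})^{\pi(\mathcal{R})}\,\overline{\gamma}^{\mbox{len}(\mathcal{R})-\pi(\mathcal{R})};
\]
Theorem~\ref{thm:convrate} forces $\mu^{\overline{\gamma}}(B_{\delta}(P^{\ast}))\to 1$ and $\overline{\gamma}^{\mbox{len}(\mathcal{R})-\pi(\mathcal{R})}\to 1$ as $\overline{\gamma}\uparrow 1$, so taking logarithms yields the lower bound.

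Goodness of $I$ reduces to boundedness of the level sets $\{X:I(X)\le\alpha\}$: any string with at most $\alpha$ applications of $f_{0}$ has numerical value dominated, via Proposition~\ref{string_prop}(iii), by a finite $f_{0}$-iterate of a fixed bounded matrix; closedness is automatic once $I$ is lower semicontinuous, which, if not immediate from the formula, is imposed by passing to its lsc envelope (which coincides with $I$ on $\mbox{cl}(\mathcal{S})=\mbox{supp}(\mu^{\overline{\gamma}})$ by Theorem~\ref{weakconv1}). The main technical obstacle is the rigorous interchange of the three limits $\overline{\gamma}\uparrow 1$, $\delta\downarrow 0$, $T\to\infty$ in the upper bound: one must send $\overline{\gamma}\uparrow 1$ first to exploit the finite-$T$ combinatorial estimate, then $\delta\downarrow 0$ so that $\overline{\mathcal{F}_{K_{T}\delta}}$ contracts back toward $\mathcal{F}$, and finally $T\to\infty$ so that $I_{T}\downarrow I$ by Proposition~\ref{string_prop}(i). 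Establishing that $\inf_{X\in\overline{\mathcal{F}_{K_{T}\delta}}}I_{T}(X)\to\inf_{X\in\mathcal{F}}I(X)$ in the appropriate lsc sense will require uniform control of the Lipschitz constants $K_{T}$, the monotonicity of $I_{T}$, and the stochastic boundedness established in Lemma~\ref{cr_sb} to keep minimizing sequences of strings tight.
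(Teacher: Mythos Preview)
Your lower bound is essentially the paper's argument (the paper uses the uniform Riccati contraction of Lemma~\ref{unif_conv} in place of your appeal to Theorem~\ref{thm:convrate}, but the mechanism is the same). The upper bound, however, has a genuine gap, and there is also a circularity in the overall logic.

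\textbf{Circularity.} In the paper's logical order, Theorem~\ref{thm:convrate} is \emph{derived from} the MDP machinery (its first assertion via Corollary~\ref{cor:lower}, its second via Lemma~\ref{MDP_upper}). Invoking it as an input to prove Theorem~\ref{thm:mdp} is therefore circular. You would need an independent proof of the concentration estimate, and the rate you need is in fact stronger than what any such independent argument could give --- which leads to the next point.

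\textbf{The upper-bound split caps the exponent at $1$.} Your decomposition
\[
\mu^{\overline{\gamma}}(\mathcal{F})\le \mathbb{P}^{\overline{\gamma},P^{\ast}}\!\bigl(P_{T}\in\overline{\mathcal{F}_{K_{T}\delta}}\bigr)+\mu^{\overline{\gamma}}\!\bigl(B_{\delta}(P^{\ast})^{c}\bigr)
\]
produces, after taking $\limsup_{\overline{\gamma}\uparrow 1}$ and using Proposition~\ref{lim_num}, an exponent of $\min\bigl(k_{T}^{\ast}(\overline{\mathcal{F}_{K_{T}\delta}}),\,\text{exponent of the complement}\bigr)$. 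But for small $\delta$ the point $f_{0}(P^{\ast})$ lies in $B_{\delta}(P^{\ast})^{c}$ and has $I\bigl(f_{0}(P^{\ast})\bigr)=1$, so the complement term genuinely decays like $(1-\overline{\gamma})^{1}$ and no faster. Hence your bound can never exceed exponent $1$, regardless of how you later send $\delta\downarrow 0$ and $T\to\infty$; for closed sets $\mathcal{F}$ with $\inf_{\mathcal{F}}I\ge 2$ the argument fails outright. This is not a limit-interchange subtlety but a structural obstruction: splitting the invariant measure by proximity to $P^{\ast}$ cannot work because mass $\sim(1-\overline{\gamma})$ always sits just outside any fixed ball.

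\textbf{What the paper does instead.} The paper avoids any such split by starting the chain deterministically at $P^{\ast}$ and using the Portmanteau inequality
\[
\mu^{\overline{\gamma}}(K)\le\liminf_{t\to\infty}\mathbb{P}^{\overline{\gamma},P^{\ast}}\!\bigl(P_{t}\in\overline{K_{\varepsilon}}\bigr).
\]
The crucial new ingredient is Lemma~\ref{top_string}: for any closed $\mathcal{F}$ there is a \emph{finite} $t_{\mathcal{F}}$ (independent of the string length) such that every string in $\mathcal{S}^{P^{\ast}}$ whose numerical value lies in $\mathcal{F}$ already contains at least $\ell(\mathcal{F})$ copies of $f_{0}$ within its first $t_{\mathcal{F}}$ positions. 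This turns the $t$-dependent combinatorial estimate into a bound uniform in $t$, so one can pass to the $\liminf_{t\to\infty}$ without losing the exponent. Tightness (Lemma~\ref{tight}, giving arbitrary exponents on the complement of large compacta) then extends from compact to closed sets.

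\textbf{Lower semicontinuity of $I$.} Your remark that one may ``pass to the lsc envelope, which coincides with $I$ on $\mbox{cl}(\mathcal{S})$'' is not justified: establishing $I=I_{L}$ is a nontrivial step that the paper carries out separately, again via Lemma~\ref{top_string} together with a pigeonhole argument on the finitely many string patterns of length $t_{0}$. Without this, the upper and lower bounds would be stated for different functions.
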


Theorem~\ref{thm:mdp} provides a complete understanding of the family of invariant measures $\{\mathbb{\mu}^{\overline{\gamma}}\}$ as $\overline{\gamma}\uparrow 1$. First, it establishes the important qualitative behavior of $\{\mathbb{\mu}^{\overline{\gamma}}\}$, namely, that rare events decay \emph{exactly} as power-laws of $(1-\overline{\gamma})$ as $\overline{\gamma}\uparrow 1$. Also the exact exponent of such a power law decay depends on the particular rare event and is obtained as the solution of an associated variational problem involving the minimization of the rate function $I(\cdot)$. This is relevant for a system designer who can trade-off estimation accuracy with communication required. For example, given a tolerance $M>0$, we may ask the question, at what operating $\overline{\gamma}$ is the probability of lying outside the $M$-neighborhood centered at $P^{\ast}$ less than some $\delta>0$. Using the notation of (\ref{moddev7}), we then have
\begin{equation}
\label{main_res7}
\mathbb{\mu}^{\overline{\gamma}}\left(B_{M}^{C}(P^{\ast})\right)\sim \left(1-\overline{\gamma}\right)^{\inf_{X\in B_{M}^{C}(P^{\ast})}I(X)}
\end{equation}
Thus by computing $\inf_{X\in B_{M}^{C}(P^{\ast})}I(X)$, the designer obtains an estimate of the $\overline{\gamma}$ required to maintain a probability of error less than $\delta$. Thus the estimation of probabilities of rare events reduces to solving deterministic variational problems. As shown in Section~\ref{comp}, several techniques can be employed to solve these variational problems efficiently. Finally, we emphasize that our analysis of reducing the problem of estimating probabilities of interest to solving variational problems efficiently is much more definitive and relevant than numerically estimating the invariant distributions. A naive numerical approach of simulating the distributions $\{\mathbb{\mu}^{\overline{\gamma}}\}$ as $\overline{\gamma}\uparrow 1$ becomes meaningless here as the rare events, which are of interest, become increasingly difficult to simulate as $\overline{\gamma}\uparrow 1$. One may take recourse to sophisticated simulation techniques like importance sampling (see, for example,~\cite{Dupuisimpsamp}), but such approaches require characterization of the distributions in question, which is addressed in this paper.

\section{Moderate Deviations (MD) for finite-dimensional distributions}
\label{mod_finite} In this section, we establish MD for finite
dimensional distributions of the process
$\{P_{t}\}_{t\in\mathbb{T}_{+}}$ as $\overline{\gamma}\uparrow 1$. We start by setting notation.
%
 Fix $n\in\mathbb{N}$ and $t_{1}<\cdots < t_{n}\in\mathbb{T}_{+}$
and recall the sets $\mathcal{S}_{t_{1},\cdots,t_{n}}^{P_{0}}$ and
$\mathcal{S}_{t_{1},\cdots,t_{n}}(\mathbf{X})$ for
$\mathbf{X}\in\bigotimes_{i=1}^{n}\mathbb{S}_{+}^{N}$.
 As noted in Proposition~\ref{string_prop}, the random object
$\left(P_{t_{1}},\cdots,P_{t_{n}}\right)$ takes only a finite
number of values and for all $\overline{\gamma}$
\begin{equation}
\label{mod_finite10}
\mathbb{P}^{\overline{\gamma},P_{0}}\left(\left(P_{t_{1}},\cdots,P_{t_{n}}\right)\in\mathcal{N}\left(\mathcal{S}_{t_{1},\cdots,t_{n}}^{P_{0}}\right)\right)=1
\end{equation}

We generalize the definition of the functional
$\pi:\mathcal{S}^{P_{0}}_{t}\longmapsto\mathbb{Z}_{+}$
(eqn.~\ref{concatS5}) to strings in
$\mathcal{S}^{P_{0}}_{t_{1},\cdots, t_{n}}$ by
\begin{equation}
\label{mod_finite11}
\pi\left(\overline{\mathcal{R}}\right)=\left\{
\begin{array}{ll}
                    \sum_{j=1}^{t_{n}}\mathbb{I}_{\{0\}}(i_{j}) & \mbox{if $t\geq 1$} \\
                    0 & \mbox{otherwise}
                   \end{array}
          \right.
\end{equation}
Thus $\pi(\cdot)$ counts the number of $f_{0}$'s in the string
$\overline{\mathcal{R}}$.
%
%
 Also, for $\mathbf{X}\in\bigotimes_{i=1}^{n}\mathbb{S}_{+}^{N}$,
define
\begin{equation}
\label{mod_finite12}
\ell(\mathbf{X})=\min_{\overline{\mathcal{R}}\in\mathcal{S}_{t_{1},\cdots,t_{n}}^{P_{0}}(\mathbf{X})}\pi(\overline{\mathcal{R}})
\end{equation}
(We adopt the convention that the minimum of an empty set is
$\infty$.)

The following result shows that the function
$\ell:\bigotimes_{i=1}^{n}\mathbb{S}_{+}^{N}\longmapsto\mathbb{R}_{+}$
is a good rate function on
$\bigotimes_{i=1}^{n}\mathbb{S}_{+}^{N}$.
\begin{proposition}
\label{mod_fin_good} The function
$\ell:\bigotimes_{i=1}^{n}\mathbb{S}_{+}^{N}\longmapsto\mathbb{R}_{+}$
in (\ref{mod_finite12}) is a good rate function on
$\bigotimes_{i=1}^{n}\mathbb{S}_{+}^{N}$.
\end{proposition}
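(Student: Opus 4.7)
The plan is to exploit the fact that, once the block length $n$ and time indices $t_1 < \cdots < t_n$ are fixed, the set $\mathcal{S}_{t_1,\cdots,t_n}^{P_0}$ is \emph{finite}. Indeed, each concatenated string is determined by the choice of $i_1,\cdots,i_{t_n} \in \{0,1\}$, so there are at most $2^{t_n}$ such strings. Consequently, its image under $\mathcal{N}(\cdot)$, namely $\mathcal{N}(\mathcal{S}_{t_1,\cdots,t_n}^{P_0}) \subset \bigotimes_{i=1}^{n}\mathbb{S}_{+}^{N}$, is a finite set.

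First, I would observe that for any $\mathbf{X} \notin \mathcal{N}(\mathcal{S}_{t_1,\cdots,t_n}^{P_0})$, we have $\mathcal{S}_{t_1,\cdots,t_n}^{P_0}(\mathbf{X}) = \phi$, so by the adopted convention $\ell(\mathbf{X}) = \infty$. Hence $\ell$ is finite only on the finite set $\mathcal{N}(\mathcal{S}_{t_1,\cdots,t_n}^{P_0})$, and on this set $\pi$ takes values in $\{0,1,\cdots,t_n\}$, so $\ell$ takes values in $\{0,1,\cdots,t_n\}\cup\{\infty\}$.

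Next, I would verify the two defining properties of a good rate function directly from finiteness. Fix $\alpha \in \mathbb{R}_+$. The level set
\begin{equation}
L_\alpha = \left\{\mathbf{X}\in\bigotimes_{i=1}^{n}\mathbb{S}_{+}^{N} \,\big|\, \ell(\mathbf{X}) \leq \alpha\right\}
\end{equation}
is contained in $\mathcal{N}(\mathcal{S}_{t_1,\cdots,t_n}^{P_0})$ (since $\ell(\mathbf{X}) < \infty$ forces $\mathbf{X}$ to be a numerical value of some string), and is therefore a finite subset of the metric space $\bigotimes_{i=1}^{n}\mathbb{S}_{+}^{N}$. Finite subsets of a metric space are automatically closed and compact, which simultaneously gives lower semicontinuity of $\ell$ (equivalent to closedness of all level sets) and compactness of the level sets.

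There is essentially no obstacle here: the entire argument hinges on the trivial observation that a string of fixed length $t_n$ over a binary alphabet admits only finitely many configurations. The only minor care needed is to handle the $\infty$ value properly (which is done by the empty-set-minimum convention) and to ensure lower semicontinuity is cast correctly; both follow immediately from the level sets being finite, hence closed, in the product metric topology of $\bigotimes_{i=1}^{n}\mathbb{S}_{+}^{N}$.
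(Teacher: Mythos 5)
Your proof is correct and follows essentially the same route as the paper: the effective domain of $\ell$ is finite (since only finitely many binary strings of length $t_{n}$ exist), so every level set is a finite, hence closed and compact, subset of $\bigotimes_{i=1}^{n}\mathbb{S}_{+}^{N}$, giving both lower semicontinuity and goodness. Your write-up merely spells out the finiteness argument that the paper states in one line.
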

\begin{proof} Clearly,  $\ell(\cdot)>0$.
Its level sets are compact because its effective domain $\mathcal{D}_{\ell}$ is finite, where
\begin{equation}
\label{mod_fin_good1}
\mathcal{D}_{\ell}=\left\{\mathbf{X}\in\bigotimes_{i=1}^{n}\mathbb{S}_{+}^{N}~|~\ell(\mathbf{X})<\infty\right\}
\end{equation}
\end{proof}

We then have the following result giving the MD for the family
(as $\overline{\gamma}\uparrow 1$) of finite dimensional
distributions $(P_{t_{1}},\cdots,P_{t_{n}})$. The proof is provided in Appendix~\ref{proof_thm:fin}.
\begin{theorem}
\label{thm:fin} Fix $n\in\mathbb{N}$ and $t_{1}<\cdots <
t_{n}\in\mathbb{T}_{+}$ and let
$(P_{t_{1}},\cdots,P_{t_{n}})^{\overline{\gamma}}$ be the family
of of finite dimensional distributions indexed by
$\overline{\gamma}$, starting from the same initial state $P_{0}$.
Then for every
$B\in\mathcal{B}\left(\bigotimes_{i=1}^{n}\mathbb{S}_{+}^{N}\right)$,
we have
\begin{equation}
\label{thm:fin1} \lim_{\overline{\gamma}\uparrow
1}\frac{1}{\ln\left(1-\overline{\gamma}\right)}\ln\left(\mathbb{P}^{\overline{\gamma},P_{0}}\left((P_{t_{1}},\cdots,P_{t_{n}})^{\overline{\gamma}}\in
B\right)\right)=\inf_{\mathbf{X}\in
B}I^{t_{1},\cdots,t_{n}}_{P_{0}}\left(\mathbf{X}\right)
\end{equation}
where
\begin{equation}
\label{thm:fin2}
I^{t_{1},\cdots,t_{n}}_{P_{0}}\left(\mathbf{X}\right)=\left\{
\begin{array}{ll}
                    \ell(\mathbf{X}) & \mbox{if $\mathbf{X}\in\mathcal{S}_{t_{1},\cdots,t_{n}}^{P_{0}}$} \\
                    \infty & \mbox{otherwise}
                   \end{array}
          \right.
\end{equation}
\end{theorem}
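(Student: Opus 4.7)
The plan is to exploit the discreteness of the finite-dimensional distributions. By Proposition~\ref{string_prop}(ii), the random tuple $(P_{t_1},\ldots,P_{t_n})$ is supported on the finite set $\mathcal{N}(\mathcal{S}_{t_1,\ldots,t_n}^{P_0})$ for every $\overline{\gamma}$. Hence it suffices to understand $\mathbb{P}^{\overline{\gamma},P_0}((P_{t_1},\ldots,P_{t_n})=\mathbf{X})$ for each $\mathbf{X}$ in this finite set, and then reassemble the contributions over a general Borel set $B$.

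First I would partition the event $\{(P_{t_1},\ldots,P_{t_n})=\mathbf{X}\}$ by the underlying Bernoulli realization. Each concatenated string $\overline{\mathcal{R}}\in\mathcal{S}_{t_1,\ldots,t_n}^{P_0}(\mathbf{X})$ corresponds bijectively to a unique outcome $(\gamma_0,\ldots,\gamma_{t_n-1})\in\{0,1\}^{t_n}$, where the number of zeros equals $\pi(\overline{\mathcal{R}})$. Since the $\gamma_t$ are i.i.d.\ Bernoulli$(\overline{\gamma})$,
\begin{equation}
\mathbb{P}^{\overline{\gamma},P_0}\bigl((P_{t_1},\ldots,P_{t_n})=\mathbf{X}\bigr)
=\sum_{\overline{\mathcal{R}}\in\mathcal{S}_{t_1,\ldots,t_n}^{P_0}(\mathbf{X})}(1-\overline{\gamma})^{\pi(\overline{\mathcal{R}})}\,\overline{\gamma}^{\,t_n-\pi(\overline{\mathcal{R}})}.
\end{equation}
This is a finite sum (its cardinality is at most $2^{t_n}$, which is fixed as $\overline{\gamma}\uparrow 1$).

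Next I would compute the logarithmic asymptotics. Each summand has $-\ln\bigl[(1-\overline{\gamma})^{\pi(\overline{\mathcal{R}})}\overline{\gamma}^{t_n-\pi(\overline{\mathcal{R}})}\bigr]/\ln(1-\overline{\gamma})\to -\pi(\overline{\mathcal{R}})$ as $\overline{\gamma}\uparrow 1$, because $\ln\overline{\gamma}/\ln(1-\overline{\gamma})\to 0$. Proposition~\ref{lim_num} then yields
\begin{equation}
\lim_{\overline{\gamma}\uparrow 1}\frac{\ln\mathbb{P}^{\overline{\gamma},P_0}((P_{t_1},\ldots,P_{t_n})=\mathbf{X})}{\ln(1-\overline{\gamma})}=\min_{\overline{\mathcal{R}}\in\mathcal{S}_{t_1,\ldots,t_n}^{P_0}(\mathbf{X})}\pi(\overline{\mathcal{R}})=\ell(\mathbf{X}),
\end{equation}
and this identifies $I^{t_1,\ldots,t_n}_{P_0}(\mathbf{X})=\ell(\mathbf{X})$ for $\mathbf{X}\in\mathcal{N}(\mathcal{S}_{t_1,\ldots,t_n}^{P_0})$. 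For $\mathbf{X}$ outside this finite set, the probability vanishes identically, consistent with the convention $\ln 0=-\infty$ in Proposition~\ref{lim_num} and the assignment $I^{t_1,\ldots,t_n}_{P_0}(\mathbf{X})=\infty$.

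Finally, for a general $B\in\mathcal{B}\bigl(\bigotimes_{i=1}^n\mathbb{S}_+^N\bigr)$, I would write
\begin{equation}
\mathbb{P}^{\overline{\gamma},P_0}\bigl((P_{t_1},\ldots,P_{t_n})\in B\bigr)=\sum_{\mathbf{X}\in B\cap\mathcal{N}(\mathcal{S}_{t_1,\ldots,t_n}^{P_0})}\mathbb{P}^{\overline{\gamma},P_0}\bigl((P_{t_1},\ldots,P_{t_n})=\mathbf{X}\bigr),
\end{equation}
again a finite sum. A second application of Proposition~\ref{lim_num} collapses the outer sum to the minimum, giving $\inf_{\mathbf{X}\in B}I^{t_1,\ldots,t_n}_{P_0}(\mathbf{X})$, where the extension of the infimum to all of $B$ is harmless because the rate function is $+\infty$ off the support. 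No real obstacle arises: the whole argument reduces to (a)~the combinatorial observation that the Bernoulli weight of a realization depends only on $\pi(\overline{\mathcal{R}})$, and (b)~the min-over-sum collapse supplied by Proposition~\ref{lim_num}. The only mild subtlety is bookkeeping between strings and their numerical values, which is handled by the definition of $\mathcal{S}_{t_1,\ldots,t_n}^{P_0}(\mathbf{X})$.
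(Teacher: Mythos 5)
Your proposal is correct and follows essentially the same route as the paper: the key identity expressing $\mathbb{P}^{\overline{\gamma},P_{0}}\left((P_{t_{1}},\cdots,P_{t_{n}})=\mathbf{X}\right)$ as a finite sum of $(1-\overline{\gamma})^{\pi(\overline{\mathcal{R}})}\overline{\gamma}^{t_{n}-\pi(\overline{\mathcal{R}})}$ over $\mathcal{S}_{t_{1},\cdots,t_{n}}^{P_{0}}(\mathbf{X})$, followed by two applications of Proposition~\ref{lim_num} (first over strings, then over the finite intersection $B\cap\mathcal{N}(\mathcal{S}_{t_{1},\cdots,t_{n}}^{P_{0}})$), is exactly the paper's argument. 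The only cosmetic difference is that you obtain the sum by partitioning over Bernoulli realizations directly, whereas the paper first invokes the Markov property and independence of dropouts; the content is the same.
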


\begin{remark}
\label{rem_mod_fin} The MDP for finite dimensional distributions
gives insight into transient system behavior. More importantly,
it offers considerable insight in guessing the proper rate
function governing the MDP for the family of invariant
distributions. A naive approach to the MDP rate
function for the family of invariant distributions is to view it as a
suitable `limit' of fixed time rate functions $I^{t}(\cdot)$,
provided the latter converges in an appropriate sense to a limit
$I(\cdot)$ as $t\rightarrow\infty$. The intuition being the weak
convergence of the random sequence
$\{P_{t}\}_{t\in\mathbb{T}_{+}}$ as $t\rightarrow\infty$. However,
in general, the limit of fixed time rate functions may not be the
rate function governing the MDP for the family of invariant rate
functions. One needs to verify rigorously that the guessed rate
function obtained intuitively is the actual rate function
governing the required MDP. This is precisely the way (at least
implicitly) we establish the MDP rate function of the family of
invariant distributions in Section~\ref{inv_MDP}.

\end{remark}


\section{MDP for invariant distributions}
\label{inv_MDP} This section constitutes the key technical part of the paper. Apart from establishing the main ingredients for proving the MDP results in Section~\ref{main_res}, the results are of independent interest and form a basis for understanding the characteristics of stationary measures resulting from \emph{stable} iterated function systems in general. As suggested in
Remark~\ref{rem_mod_fin}, an intuitive guess for the MDP rate function of the family $\{\mathbb{\mu}^{\overline{\gamma}}\}$ as
$\overline{\gamma}\uparrow 1$ is $I(\cdot)$. However, apriori, it is not even obvious whether $I(\cdot)$ is lower semicontinuous to qualify as a rate function. Hence, we start by defining the lower semicontinuous regularization $I_{L}$ of $I$ and establish some of its properties in Subsection~\ref{inv_MDP_rate}. At this point, it is not clear whether $I=I_{L}$ (i.e., $I$ is lower semicontinuous) and, hence, we set to establish the MDP for the family $\{\mathbb{\mu}^{\overline{\gamma}}\}$ with $I_{L}$ as a candidate rate function. The major technical lemmas of this section are presented in Subsections~\ref{inv_MDP_lower},\ref{inv_MDP_upper}, where we establish the MDP lower and upper bounds respectively w.r.t. the proposed rate function $I_{L}$. Our approach is fairly general and is not necessarily restricted to the particular filtering problem considered here.

%

\subsection{A rate function}
\label{inv_MDP_rate}Recall: $I:\mathbb{S}_{+}^{N}\longmapsto\overline{\mathbb{R}}_{+}$ by
\begin{equation}
\label{inv_MDP_rate1}
I(X)=\inf_{\mathcal{R}\in\mathcal{S}^{P^{\ast}}}\pi(\mathcal{R}),~~~\forall X\in\mathbb{S}_{+}^{N}
\end{equation}
where, as usual, we adopt the convention that the infimum of an empty set is $\infty$. Note that, for $X\in\mathbb{S}_{+}^{N}$,
\begin{equation}
\label{inv_MDP_rate2}
I(X)=\inf_{t\in\mathbb{T}_{+}}I_{t}^{P^{\ast}}(X)
\end{equation}
and can be thought of as a natural generalization of the marginal rate functions $I_{t}^{P^{\ast}}(\cdot)$ for all $t$.

However, the function $I(\cdot)$ is not generally lower semicontinuous (as will be seen later) and hence does not qualify as a rate function. A candidate rate function for the family of invariant distributions can be the lower semicontinuous regularization of $I(\cdot)$, defined as
\begin{equation}
\label{inv_MDP_rate3}
I_{L}(X)=\lim_{\varepsilon\rightarrow 0}\inf_{Y\in B_{\varepsilon}(X)}I(Y),~~~~\forall X\in\mathbb{S}_{+}^{N}
\end{equation}
(note if $I(\cdot)$ is lower semicontinuous then $I(\cdot)=I_{L}(\cdot)$.)

The following proposition states some easily verifiable properties of $I_{L}:\mathbb{S}_{+}^{N}\longmapsto\overline{\mathbb{R}}_{+}$, whose proof is provided in Appendix~\ref{proof_inv_MDP}.
\begin{proposition}
\label{inv_MDP_rate_prop}
\begin{itemize}
\item [(i)] The function $I_{L}(\cdot)$ is a good rate function on $\mathbb{S}_{+}^{N}$.

\item [(ii)] For every $X\in\mathbb{S}_{+}^{N}$, we have
\begin{equation}
\label{inv_MDP_rate_prop1}
I_{L}(X)=\lim_{\varepsilon\rightarrow 0}\inf_{Y\in \overline{B_{\varepsilon}(X)}}I(Y)
\end{equation}

\item [(iii)] For any non-empty set $\Gamma\in\mathcal{B}(\mathbb{S}_{+}^{N})$ we have
\begin{equation}
\label{inv_MDP_rate_prop200}
\inf_{X\in\Gamma}I_{L}(X)\leq\inf_{X\in\Gamma}I(X)
\end{equation}
In addition, if $\Gamma$ is open, the reverse inequality holds and we have
\begin{equation}
\label{inv_MDP_rate_prop201}
\inf_{X\in\Gamma}I_{L}(X)=\inf_{X\in\Gamma}I(X)
\end{equation}

\item [(iv)] Let $K\subset\mathbb{S}_{+}^{N}$ be a non-empty compact set. We have
\begin{equation}
\label{add_1}
\lim_{\varepsilon\rightarrow 0}\inf_{Y\in\overline{K_{\varepsilon}}}I_{L}(Y)=\inf_{Y\in K}I_{L}(Y)
\end{equation}

\end{itemize}
\end{proposition}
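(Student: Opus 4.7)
The plan is to prove the four parts in the order~(ii), (i), (iii), (iv). Part~(ii) is a purely topological sandwich that gives a cleaner formulation to reuse later; part~(i) is the substantive step where the specific structure of the iterated function system enters; parts~(iii) and (iv) are then routine once (i) is in hand. The only input beyond standard facts about lower semicontinuous envelopes is Proposition~\ref{string_prop}(iii), which controls the effective domain of $I$ via a uniform PSD upper bound governed by $\pi(\cdot)$.

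For part~(ii), I would observe that whenever $0<\varepsilon<\varepsilon'$ one has $B_{\varepsilon}(X)\subset\overline{B_{\varepsilon}(X)}\subset B_{\varepsilon'}(X)$, so $\inf_{B_{\varepsilon'}(X)}I\leq \inf_{\overline{B_{\varepsilon}(X)}}I\leq\inf_{B_{\varepsilon}(X)}I$. Letting $\varepsilon,\varepsilon'\downarrow 0$, the outer quantities share the common limit $I_{L}(X)$, forcing the middle quantity to the same limit. For part~(i), lower semicontinuity of $I_{L}$ is the defining property of the envelope $\sup_{\varepsilon>0}\inf_{B_{\varepsilon}(X)}I$. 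For the good rate function property, I would note that $\{X:I_{L}(X)\leq\alpha\}\subset\bigcap_{\delta>0}\overline{\{Y:I(Y)\leq\alpha+\delta\}}$, so it suffices to show that $\{I\leq\beta\}$ is contained in a bounded subset of $\mathbb{S}_{+}^{N}$ for each $\beta\geq 0$. If $I(Y)\leq\beta$, some string $\mathcal{R}\in\mathcal{S}^{P^{\ast}}$ satisfies $\mathcal{N}(\mathcal{R})=Y$ and $\pi(\mathcal{R})\leq\beta$; Proposition~\ref{string_prop}(iii) applied with initial state $P^{\ast}$ then yields $Y\preceq f_{0}^{\lfloor\beta\rfloor}(\alpha_{P^{\ast}}I)$, a single PSD upper bound independent of $Y$, which translates to a uniform operator-norm bound. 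Combined with the closedness afforded by lsc, $\{I_{L}\leq\alpha\}$ is closed and bounded, hence compact.

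Part~(iii) is almost immediate. The inequality $\inf_{\Gamma}I_{L}\leq\inf_{\Gamma}I$ follows from the pointwise bound $I_{L}\leq I$. When $\Gamma$ is open and $X\in\Gamma$, pick $\varepsilon_{X}>0$ with $B_{\varepsilon_{X}}(X)\subset\Gamma$; then for every $\varepsilon\leq\varepsilon_{X}$, $\inf_{B_{\varepsilon}(X)}I\geq\inf_{\Gamma}I$, and letting $\varepsilon\downarrow 0$ gives $I_{L}(X)\geq\inf_{\Gamma}I$, from which taking the infimum over $X\in\Gamma$ yields the reverse inequality. For part~(iv), $\lim_{\varepsilon\downarrow 0}\inf_{\overline{K_{\varepsilon}}}I_{L}\leq\inf_{K}I_{L}$ follows from $K\subset\overline{K_{\varepsilon}}$. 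For the reverse, call the limit $L$; the case $L=\infty$ is trivial, so assume $L<\infty$ and choose $Y_{n}\in\overline{K_{1/n}}$ with $I_{L}(Y_{n})\leq\inf_{\overline{K_{1/n}}}I_{L}+1/n$. For large $n$ the values $I_{L}(Y_{n})$ are bounded, so by part~(i) the $Y_{n}$ eventually sit in a single compact level set of $I_{L}$; extract a convergent subsequence $Y_{n_{k}}\to Y^{\ast}$. Since $d(Y_{n_{k}},K)\leq 1/n_{k}$ and $K$ is closed, $Y^{\ast}\in K$, and lower semicontinuity of $I_{L}$ gives $\inf_{K}I_{L}\leq I_{L}(Y^{\ast})\leq\liminf_{k}I_{L}(Y_{n_{k}})=L$.

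The main obstacle is the boundedness step in part~(i); everything else is a standard manipulation of the lsc envelope plus elementary compactness arguments. That step is where the specific dynamics of the RRE enter: the PSD domination $\mathcal{N}(\mathcal{R})\preceq f_{0}^{\pi(\mathcal{R})}(\alpha_{P^{\ast}}I)$ from Proposition~\ref{string_prop}(iii) is the structural reason the cost functional $\pi$ on strings translates into compact level sets for the numerical-value rate function $I_{L}$, and it is the only place where the properties of the operators $f_{0},f_{1}$ are used at all.
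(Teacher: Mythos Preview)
Your proposal is correct and follows essentially the same approach as the paper. The key structural input---Proposition~\ref{string_prop}(iii) giving the PSD bound $\mathcal{N}(\mathcal{R})\preceq f_{0}^{\pi(\mathcal{R})}(\alpha_{P^{\ast}}I)$---is identified and used in the same way to get boundedness of the level sets in part~(i), and parts~(ii) and~(iii) match the paper's arguments almost verbatim. The only noteworthy difference is in part~(iv): the paper takes exact minimizers $X_{n}\in\overline{K_{\varepsilon_{n}}}$ (using compactness of $\overline{K_{\varepsilon_{n}}}$), projects each onto its nearest point $Y_{n}\in K$, extracts a convergent subsequence of the $Y_{n}$ in $K$, and then argues by contradiction; you instead take approximate minimizers and invoke the goodness of $I_{L}$ from part~(i) to trap them in a compact level set before extracting a limit. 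Both routes are valid; yours makes slightly more efficient use of part~(i) and avoids the auxiliary projection step.
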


\subsection{The MDP lower bound}
\label{inv_MDP_lower} The following result establishes the MDP
lower bound for the sequence
$\{\mathbb{\mu}^{\overline{\gamma}}\}$ of invariant distributions
as $\overline{\gamma}\uparrow 1$.

\begin{lemma}
\label{inv_MDP_lower_lemma} Let
$\Gamma\in\mathcal{B}\left(\mathbb{S}_{+}^{N}\right)$. Then the
following lower bound holds:
\begin{equation}
\label{lower1} \liminf_{\overline{\gamma}\uparrow 1}-\frac{1}{\ln(1-\overline{\gamma})}\ln\mathbb{\mu}^{\overline{\gamma}}\left(\Gamma\right)\geq-\inf_{X\in
\Gamma^{\circ}}I_{L}(X)
\end{equation}
\end{lemma}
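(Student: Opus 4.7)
The plan is to exploit stationarity of $\mu^{\overline{\gamma}}$ under the Markov kernel of the RRE, combined with the concentration $\mu^{\overline{\gamma}}(B_\eta(P^*))\to 1$ given by Theorem~\ref{thm:convrate}. It suffices to prove that for every $X\in\Gamma^\circ$ with $I_L(X)<\infty$ one has $\liminf_{\overline{\gamma}\uparrow 1}-\ln\mu^{\overline{\gamma}}(\Gamma)/\ln(1-\overline{\gamma})\geq-I_L(X)$; taking the supremum over such $X$ then gives~\eqref{lower1}. (If every $X\in\Gamma^\circ$ has $I_L(X)=\infty$, or $\Gamma^\circ=\emptyset$, the claim is vacuous.)

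Fix $X\in\Gamma^\circ$ with $I_L(X)<\infty$ and $\delta>0$. Applying the definitions~\eqref{inv_MDP_rate3} and~\eqref{inv_MDP_rate1}, I choose $r>0$ small enough that $B_r(X)\subset\Gamma$, a point $Y\in B_{r/2}(X)$ with $I(Y)<I_L(X)+2\delta$, and a string $\mathcal{R}_0=(f_{i_1},\ldots,f_{i_{t_0}},P^*)\in\mathcal{S}^{P^*}$ with $\mathcal{N}(\mathcal{R}_0)=Y$ and $k:=\pi(\mathcal{R}_0)\leq I_L(X)+2\delta$. Let $g:=f_{i_1}\circ\cdots\circ f_{i_{t_0}}$; by Lemma~\ref{Lip}, $g$ is Lipschitz with some constant $K_g>0$. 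Setting $\eta:=r/(2K_g)$, I observe that for every $P_0\in B_\eta(P^*)$, $\|g(P_0)-Y\|=\|g(P_0)-g(P^*)\|\leq K_g\eta<r/2$, so $g(P_0)\in B_{r/2}(Y)\subset B_r(X)\subset\Gamma$.

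The key one-step estimate is probabilistic. On the event $\{\gamma_{t_0-j}=i_j,\,j=1,\ldots,t_0\}$, which is independent of $P_0$ and has probability exactly $(1-\overline{\gamma})^{k}\,\overline{\gamma}^{\,t_0-k}$, the state $P_{t_0}=f_{\gamma_{t_0-1}}\circ\cdots\circ f_{\gamma_0}(P_0)=g(P_0)$ lies in $\Gamma$ for every $P_0\in B_\eta(P^*)$. Invariance of $\mu^{\overline{\gamma}}$ under the $t_0$-step kernel then yields
\begin{equation*}
\mu^{\overline{\gamma}}(\Gamma)=\int\mathbb{P}^{\overline{\gamma},P_0}(P_{t_0}\in\Gamma)\,\mu^{\overline{\gamma}}(dP_0)\;\geq\;(1-\overline{\gamma})^{k}\,\overline{\gamma}^{\,t_0-k}\,\mu^{\overline{\gamma}}(B_\eta(P^*)).
\end{equation*}
Taking logarithms, dividing by $\ln(1-\overline{\gamma})<0$, and noting that $\ln\overline{\gamma}=o(1)$ and $\ln\mu^{\overline{\gamma}}(B_\eta(P^*))=o(1)$ as $\overline{\gamma}\uparrow 1$ (by Theorem~\ref{thm:convrate}, $\mu^{\overline{\gamma}}(B_\eta(P^*))\to 1$), while $\ln(1-\overline{\gamma})\to-\infty$, one obtains $\liminf_{\overline{\gamma}\uparrow 1}-\ln\mu^{\overline{\gamma}}(\Gamma)/\ln(1-\overline{\gamma})\geq -k\geq-(I_L(X)+2\delta)$. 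Letting $\delta\downarrow 0$ and then taking the supremum over $X\in\Gamma^\circ$ delivers~\eqref{lower1}.

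The only genuinely delicate point is the use of stationarity: what the string construction by itself delivers is a lower bound on $\mathbb{P}^{\overline{\gamma},P^*}(P_{t_0}\in\Gamma)$, and because the RRE is not positive recurrent in general there is no off-the-shelf reason a finite-time bound should transfer to $\mu^{\overline{\gamma}}$. The resolution is that Lipschitz continuity (Lemma~\ref{Lip}) extends the "good pattern" from the single starting point $P^*$ to an entire ball $B_\eta(P^*)$, while Theorem~\ref{thm:convrate} simultaneously guarantees $\mu^{\overline{\gamma}}(B_\eta(P^*))\to 1$. Thus the concentration result plays the role of a uniform warm start at $P^*$, which is exactly what permits the finite-time Bernoulli estimate to pass to the stationary distribution.
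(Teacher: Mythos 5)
Your overall strategy is sound and genuinely different from the paper's: the paper never invokes stationarity of $\mu^{\overline{\gamma}}$; instead it bounds $\mu^{\overline{\gamma}}(\Gamma)\geq \mu^{\overline{\gamma}}(\overline{B}_{\varepsilon}(X))\geq\limsup_{t\rightarrow\infty}\mathbb{P}^{\overline{\gamma},P_{0}}(P_{t}\in\overline{B}_{\varepsilon}(X))$ via the Portmanteau inequality for closed sets, and makes the finite-time estimate uniform in $t$ by appending a long block $f_{1}^{t_{\varepsilon_{1}}}$ after the prefix (Lemma~\ref{unif_conv}), so that the remaining $t-t_{0}-t_{\varepsilon_{1}}$ arrivals may be arbitrary; the pattern probability $(1-\overline{\gamma})^{\pi(\mathcal{R})}\overline{\gamma}^{t_{0}+t_{\varepsilon_{1}}-\pi(\mathcal{R})}$ then survives the $\limsup$ in $t$. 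Your replacement of that device by the invariance identity $\mu^{\overline{\gamma}}(\Gamma)=\int\mathbb{P}^{\overline{\gamma},P_{0}}(P_{t_{0}}\in\Gamma)\,\mu^{\overline{\gamma}}(dP_{0})$ together with the Lipschitz extension of the good prefix from $P^{\ast}$ to the ball $B_{\eta}(P^{\ast})$ is correct in all its mechanical steps, and it has the nice side effect of producing the bound directly in terms of $I_{L}$, so you do not need Proposition~\ref{inv_MDP_rate_prop}(iii) as the paper does.

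The genuine flaw is the appeal to Theorem~\ref{thm:convrate} for $\mu^{\overline{\gamma}}(B_{\eta}(P^{\ast}))\rightarrow 1$: in the paper that theorem is proved \emph{after} this lemma, using Corollary~\ref{cor:lower}, which is explicitly extracted from the argument of this very lemma (and its second assertion uses the MDP upper bound as well). As written, your proof is therefore circular within the paper's logical structure. The gap is easily repaired without Theorem~\ref{thm:convrate}: by Lemma~\ref{unif_conv} there is $t_{\eta}$ with $\left\|f_{1}^{t_{\eta}}(Z)-P^{\ast}\right\|\leq\eta$ for every admissible $Z$, so the all-arrivals pattern of length $t_{\eta}$ sends any initial state into $\overline{B}_{\eta}(P^{\ast})$, and the same stationarity identity you already use gives
\begin{equation*}
\mu^{\overline{\gamma}}\left(\overline{B}_{\eta}(P^{\ast})\right)\geq\int\mathbb{P}^{\overline{\gamma},P_{0}}\left(P_{t_{\eta}}\in\overline{B}_{\eta}(P^{\ast})\right)\mu^{\overline{\gamma}}(dP_{0})\geq\overline{\gamma}^{\,t_{\eta}},
\end{equation*}
whose logarithm is $o\left(\left|\ln(1-\overline{\gamma})\right|\right)$, which is all your final limit computation requires (you should then run the prefix argument with the closed ball $\overline{B}_{\eta}(P^{\ast})$, or note $\mu^{\overline{\gamma}}(B_{2\eta}(P^{\ast}))\geq\overline{\gamma}^{\,t_{\eta}}$). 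With this substitution your proof is complete and self-contained.
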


\begin{proof}
Let $P_{0}$ be an arbitrary initial state and
$\left\{P^{\overline{\gamma},P_{0}}_{t}\right\}_{t\in\mathbb{T}_{+}}$
be the sequence generated by the RRE for $\overline{\gamma}\in
(0,1)$. It was shown in Theorem 9 in~\cite{Riccati-weakconv}, that,
for such $\overline{\gamma}$, the sequence
$\left\{P^{\overline{\gamma},P_{0}}_{t}\right\}_{t\in\mathbb{T}_{+}}$
converges weakly to an invariant distribution
$\mathbb{\mu}^{\overline{\gamma}}$, i.e.,
\begin{equation}
\label{lower2}
\liminf_{t\rightarrow\infty}\mathbb{P}^{\overline{\gamma},P_{0}}\left(P_{t}^{\overline{\gamma},P_{0}}\in\mathcal{O}\right)\geq\mathbb{\mu}^{\overline{\gamma}}\left(\mathcal{O}\right),~~~\forall~\mbox{open
set $\mathcal{O}\subset\mathbb{S}_{+}^{N}$}
\end{equation}
\begin{equation}
\label{lower3}
\limsup_{t\rightarrow\infty}\mathbb{P}^{\overline{\gamma},P_{0}}\left(P_{t}^{\overline{\gamma},P_{0}}\in\mathcal{F}\right)\leq\mathbb{\mu}^{\overline{\gamma}}\left(\mathcal{F}\right),~~~\forall~\mbox{closed
set $\mathcal{F}\subset\mathbb{S}_{+}^{N}$}
\end{equation}
Now consider the measurable set
$\Gamma\in\mathcal{B}\left(\mathbb{S}_{+}^{N}\right)$ and let
$X\in\Gamma^{\circ}\cap\mathcal{D}_{I}$, where $\mathcal{D}_{I}$
is the effective domain of $I(\cdot)$. Then, there exists
$\varepsilon>0$, sufficiently small, such that the closed ball
$\overline{B}_{\varepsilon}(X)\in\Gamma$. From (\ref{lower3})
it then follows
\begin{equation}
\label{lower4}
\mathbb{\mu}^{\overline{\gamma}}\left(\Gamma\right)\geq\mathbb{\mu}^{\overline{\gamma}}\left(\overline{B}_{\varepsilon}(X)\right)\geq\limsup_{t\rightarrow\infty}\mathbb{P}^{\overline{\gamma},P_{0}}\left(P_{t}^{\overline{\gamma},P_{0}}\in\mathcal{F}\right)
\end{equation}
We now set to estimate the R.H.S. of (\ref{lower4}).

To this end, recall the nonempty set $\mathcal{S}^{P^{\ast}}(X)$
of all strings of finite (but arbitrary) length with initial state
$P^{\ast}$ and numerical value $X$. For some
$t_{0}\in\mathbb{T}_{+}$ and $i_{1},\cdots,i_{t_{0}}\in\{0,1\}$,
let the string $\mathcal{R}=f_{i_{1}}\circ\cdots\circ
f_{i_{t_{0}}}(P^{\ast})\in\mathcal{S}^{P^{\ast}}(X)$. Define the
function $g:\mathbb{S}_{+}^{N}\longmapsto\mathbb{S}^{N}_{+}$ by
\begin{equation}
\label{lower5} g(Y)=f_{i_{1}}\circ\cdots\circ
f_{i_{t_{0}}}(Y),~~\forall Y\in\mathbb{S}^{N}_{+}
\end{equation}
The function $g(\cdot)$ is continuous (being the composition of
continuous functions) and hence there exists $\varepsilon_{1}>0$,
such that
\begin{equation}
\label{lower6}
\left\|g(Y)-g(P^{\ast})\right\|\leq\varepsilon,~~~\forall
Y\in\overline{B}_{\varepsilon_{1}}(P^{\ast})
\end{equation}
Also, by Lemma~\ref{unif_conv}, there exists
$t_{\varepsilon_{1}}\in\mathbb{T}_{+}$, such that
\begin{equation}
\label{lower7}
\left\|f_{1}^{t}(Y)-P^{\ast}\right\|\leq\varepsilon_{1},~~~\forall
t\geq t_{\varepsilon_{1}},~~Y\in\mathbb{S}_{+}^{N}
\end{equation}
It then follows from eqns.~(\ref{lower6},\ref{lower7}), that, for
any $t\in\mathbb{T}_{+}$, such that $t\geq
t_{0}+t_{\varepsilon_{1}}$ and any string
$\mathcal{R}_{1}\in\mathcal{S}_{t}^{P_{0}}$ of the form
\begin{equation}
\label{lower8} \mathcal{R}_{1}=\left(f_{i_{1}},\cdots,
f_{i_{t_{0}}}, f_{1}^{t_{\varepsilon_{1}}},
f_{j_{1}}\cdots, f_{j_{t-t_{0}-t_{\varepsilon_{1}}}}, P_{0}\right)
\end{equation}
where $j_{1},\cdots,j_{t-t_{0}-t_{\varepsilon_{1}}}\in\{0,1\}$, we
have
\begin{equation}
\label{lower9}
\mathcal{N}\left(\mathcal{R}_{1}\right)\in\overline{B}_{\varepsilon}(X)
\end{equation}
Indeed
\begin{eqnarray}
\label{lower10}
\left\|\mathcal{N}\left(\mathcal{R}_{1}\right)-X\right\| & = &
\left\|\mathcal{N}\left(\mathcal{R}_{1}\right)-\mathcal{N}\left(\mathcal{R}\right)\right\|\nonumber
\\ & = & \left\|g\left(f_{1}^{t_{\varepsilon_{1}}}\left(f_{j_{1}}\cdots\circ
f_{j_{t-t_{0}-t_{\varepsilon_{1}}}}(P_{0})\right)\right)-g\left(P^{\ast}\right)\right\|\nonumber
\\ & \leq & \varepsilon
\end{eqnarray}
where the last step follows from the fact, that,
\begin{equation}
\label{lower11}
\left\|f_{1}^{t_{\varepsilon_{1}}}\left(f_{j_{1}}\circ\cdots\circ
f_{j_{t-t_{0}-t_{\varepsilon_{1}}}}(P_{0})\right)-P^{\ast}\right\|\leq\varepsilon_{1}
\end{equation}
by (\ref{lower7}).

Now for $\mathcal{R}$ (defined above), $t\geq
t_{0}+t_{\varepsilon_{1}}$, consider the set of strings
\begin{equation}
\label{lower12} \mathcal{R}_{t}=\left\{\left.\left(f_{i_{1}},\cdots,
f_{i_{t_{0}}}, f_{1}^{t_{\varepsilon_{1}}},
f_{j_{1}}\cdots,
f_{j_{t-t_{0}-t_{\varepsilon_{1}}}}, P_{0}\right)\right|j_{1},\cdots,j_{t-t_{0}-t_{\varepsilon_{1}}}\in\{0,1\}\right\}
\end{equation}
(in other words, the indices
$j_{1},\cdots,j_{t-t_{0}-t_{\varepsilon_{1}}}$ can be arbitrary.)

It then follows from (\ref{lower9}) above,
\begin{equation}
\label{lower13}
\mathcal{N}\left(\mathcal{R}_{2}\right)\in\overline{B}_{\varepsilon}(X),~~~\forall
\mathcal{R}_{2}\in\mathcal{R}_{t}
\end{equation}
From the iterative construction of the sequence
$\{P_{t}^{\overline{\gamma},P_{0}}\}_{t\in\mathbb{T}_{+}}$ it is
then obvious, for $t\geq t_{0}+t_{\varepsilon_{1}}$,
\begin{eqnarray}
\label{lower14}
\mathbb{P}^{\overline{\gamma},P_{0}}\left(P_{t}^{\overline{\gamma},P_{0}}\in\overline{B}_{\varepsilon}(X)\right)
& \geq &
\mathbb{P}^{\overline{\gamma},P_{0}}\left(P_{t}^{\overline{\gamma},P_{0}}\in\mathcal{N}\left(\mathcal{R}_{t}\right)\right)\nonumber
\\ & = &
\sum_{j_{1},\cdots,j_{t-t_{0}+t_{\varepsilon_{1}}}\in\{0,1\}}\left[\left(\prod_{k=1}^{t_{0}}(1-\overline{\gamma})^{1-i_{k}}\overline{\gamma}^{i_{k}}\right)\overline{\gamma}^{t_{\varepsilon_{1}}}\left(\prod_{k=1}^{t-t_{0}-t_{\varepsilon_{1}}}(1-\overline{\gamma})^{1-j_{k}}\overline{\gamma}^{j_{k}}\right)\right]\nonumber
\\ & = &
(1-\overline{\gamma})^{\pi(\mathcal{R})}\overline{\gamma}^{t_{0}-\pi(\mathcal{R})}\overline{\gamma}^{t_{\varepsilon_{1}}}\nonumber
\\ & = & (1-\overline{\gamma})^{\pi(\mathcal{R})}\overline{\gamma}^{t_{0}+t_{\varepsilon_{1}}-\pi(\mathcal{R})}
\end{eqnarray}
From eqns.~(\ref{lower4},\ref{lower14}) we have
\begin{equation}
\label{lower15}
\mathbb{\mu}^{\overline{\gamma}}\left(\Gamma\right)\geq
(1-\overline{\gamma})^{\pi(\mathcal{R})}\overline{\gamma}^{t_{0}+t_{\varepsilon_{1}}-\pi(\mathcal{R})}
\end{equation}
Noting that $\ln(1-\overline{\gamma})<0$ and passing to the limit
in the above we have
\begin{equation}
\label{lower16} \liminf_{\overline{\gamma}\uparrow 1}-\frac{1}{\ln(1-\overline{\gamma})}\ln\mathbb{\mu}^{\overline{\gamma}}\left(\Gamma\right)\geq
-\pi(\mathcal{R})
\end{equation}
Since the above holds for all
$\mathcal{R}\in\mathcal{S}^{P^{\ast}}(X)$, we have
\begin{equation}
\label{lower17} \liminf_{\overline{\gamma}\uparrow 1}-\frac{1}{\ln(1-\overline{\gamma})}\ln\mathbb{\mu}^{\overline{\gamma}}\left(\Gamma\right)
\geq
\sup_{\mathcal{R}\in\mathcal{S}^{P^{\ast}}(X)}\left(-\pi(\mathcal{R})\right)= -\inf_{\mathcal{R}\in\mathcal{S}^{P^{\ast}}(X)}\left(\pi(\mathcal{R})\right)= -I(X)
\end{equation}
The above holds for all $X\in\Gamma^{\circ}\cap\mathcal{D}_{I}$
and hence
\begin{equation}
\label{lower18} \liminf_{\overline{\gamma}\uparrow 1}-\frac{1}{\ln(1-\overline{\gamma})}\ln\mathbb{\mu}^{\overline{\gamma}}\left(\Gamma\right)
\geq
-\inf_{X\in\Gamma^{\circ}\cap\mathcal{D}_{I}}I(X)= -\inf_{X\in\Gamma^{\circ}}I(X)
\end{equation}
where the last step follows from the fact that, for
$X\notin\mathcal{D}_{I}$, $I(X)=\infty$. To establish the Lemma, it suffices to show the R.H.S. of (\ref{lower18}) satisfies
\begin{equation}
\label{lower19}
-\inf_{X\in\Gamma^{\circ}}I(X)= -\inf_{X\in\Gamma^{\circ}}I_{L}(X)
\end{equation}
which follows from Proposition~\ref{inv_MDP_rate_prop} Assertion (iii), since $\Gamma^{\circ}$ is an open set.

\end{proof}

We extract the following result for later use, which follows from the arguments in the proof of Lemma~\ref{inv_MDP_lower} culminating to (\ref{lower15}).

\begin{corollary}
\label{cor:lower} Let $\mathcal{O}\subset\mathbb{S}_{+}^{N}$ be an open set and $\mathcal{R}\subset\mathcal{S}^{P^{\ast}}$ be a string, such that, $\mathcal{N}(\mathcal{R})\in\mathcal{O}$. Then, there exists a positive integer $t_{\mathcal{R},\mathcal{O}}$ (depending on $\mathcal{R}$ and $\mathcal{O}$), such that,
\begin{equation}
\label{cor:lower1}
\mathbb{\mu}^{\overline{\gamma}}\left(\mathcal{O}\right)\geq (1-\overline{\gamma})^{\pi(\mathcal{R})}\overline{\gamma}^{t_{\mathcal{R},\mathcal{O}}-\pi(\mathcal{R})},~~~\forall \overline{\gamma}\in [0,1]
\end{equation}
\end{corollary}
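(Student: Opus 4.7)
The statement is an explicit extraction of the lower-bound construction used in the proof of Lemma~\ref{inv_MDP_lower_lemma} to reach (\ref{lower15}). My plan is to rerun that construction with the specific string $\mathcal{R}$ provided by the hypothesis and to track how the implicit constants depend on $\mathcal{R}$ and $\mathcal{O}$, so that the constant $t_{\mathcal{R},\mathcal{O}}$ can be read off at the end rather than absorbed into a limit.

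Write $\mathcal{R} = (f_{i_{1}}, \ldots, f_{i_{t_{0}}}, P^{\ast})$ with $t_{0} = \mbox{len}(\mathcal{R})$ and set $X := \mathcal{N}(\mathcal{R})$. Since $\mathcal{O}$ is open and contains $X$, I first pick $\varepsilon > 0$ so that $\overline{B}_{\varepsilon}(X) \subset \mathcal{O}$. The composition $g := f_{i_{1}} \circ \cdots \circ f_{i_{t_{0}}}$ is continuous, so I select $\varepsilon_{1} > 0$ with $g(\overline{B}_{\varepsilon_{1}}(P^{\ast})) \subset \overline{B}_{\varepsilon}(X)$, and then invoke Lemma~\ref{unif_conv} to produce a $t_{\varepsilon_{1}} \in \mathbb{T}_{+}$ such that $f_{1}^{t}(Y) \in \overline{B}_{\varepsilon_{1}}(P^{\ast})$ for every $t \geq t_{\varepsilon_{1}}$, exactly as at (\ref{lower7}) in the proof of Lemma~\ref{inv_MDP_lower_lemma}. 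Define $t_{\mathcal{R},\mathcal{O}} := t_{0} + t_{\varepsilon_{1}}$.

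Fix an arbitrary initial state $P_{0}$. For every $t \geq t_{\mathcal{R},\mathcal{O}}$, consider the event $E_{t}$ on which the Bernoulli sequence $(\gamma_{0}, \ldots, \gamma_{t-1})$ has its innermost $t - t_{\mathcal{R},\mathcal{O}}$ entries arbitrary, its next $t_{\varepsilon_{1}}$ entries equal to $1$, and its outermost $t_{0}$ entries realizing the specific pattern $i_{t_{0}}, i_{t_{0}-1}, \ldots, i_{1}$. The triangle-type estimate (\ref{lower10}) shows that $P_{t} \in \overline{B}_{\varepsilon}(X) \subset \mathcal{O}$ on $E_{t}$, and summing the Bernoulli probabilities over the arbitrary inner coins yields
\begin{equation*}
\mathbb{P}^{\overline{\gamma}, P_{0}}\bigl(P_{t} \in \mathcal{O}\bigr) \;\geq\; (1-\overline{\gamma})^{\pi(\mathcal{R})}\, \overline{\gamma}^{\,t_{\mathcal{R},\mathcal{O}} - \pi(\mathcal{R})}.
\end{equation*}
The weak convergence $\mathbb{\mu}_{t}^{\overline{\gamma}, P_{0}} \Longrightarrow \mathbb{\mu}^{\overline{\gamma}}$ from Theorem~\ref{weakconv1}, combined with the portmanteau inequality applied to the closed set $\overline{B}_{\varepsilon}(X) \subset \mathcal{O}$, gives $\mathbb{\mu}^{\overline{\gamma}}(\mathcal{O}) \geq \limsup_{t}\mathbb{P}^{\overline{\gamma}, P_{0}}(P_{t} \in \overline{B}_{\varepsilon}(X))$, closing the inequality for $\overline{\gamma} \in (0,1)$; the endpoints $\overline{\gamma} \in \{0,1\}$ follow from the $0^{0} = 1$ convention together with nonnegativity of $\mathbb{\mu}^{\overline{\gamma}}$. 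There is no serious obstacle here, since all ingredients are present in the proof of Lemma~\ref{inv_MDP_lower_lemma}; the only item needing care is the order in which constants are chosen ($\varepsilon \to \varepsilon_{1} \to t_{\varepsilon_{1}}$), so that $t_{\mathcal{R},\mathcal{O}}$ depends only on $\mathcal{R}$ and $\mathcal{O}$ and is uniform in $\overline{\gamma}$.
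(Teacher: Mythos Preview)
Your proposal is correct and follows essentially the same approach as the paper. The paper itself does not give a separate proof of the corollary, merely noting that it ``follows from the arguments in the proof of Lemma~\ref{inv_MDP_lower_lemma} culminating to (\ref{lower15})''; you have faithfully unpacked that construction, identifying $t_{\mathcal{R},\mathcal{O}} = t_{0} + t_{\varepsilon_{1}}$ exactly as the exponent appearing in (\ref{lower15}), and your treatment of the endpoints $\overline{\gamma}\in\{0,1\}$ is a small but welcome addition the paper omits.
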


\subsection{The MDP upper bound}
\label{inv_MDP_upper} In this subsection, we establish the MDP
upper bound for the family of invariant measures as
$\overline{\gamma}\uparrow 1$. The proof is carried out in essentially three stages. First, we establish the upper bound for compact sets and this is done in Lemma~\ref{upper}. Then, in Lemma~\ref{tight} we prove a tightness result on the family of invariant distributions and finally establish the MDP upper bound for closed sets in Lemma~\ref{MDP_upper}.

We start with the following result on topological properties
of strings. We need the following definition.
\begin{definition}[Truncated String]
\label{def_trunc} Let the string $\mathcal{R}$ be given by
\begin{equation}
\label{def_trunc1}
\mathcal{R}=\left(f_{i_{1}},\cdots, f_{i_{t}}, P_{0}\right)
\end{equation}
where $t\in\mathbb{T}_{+}$, $i_{1},\cdots,i_{t}\in\{0,1\}$ and $P_{0}\in\mathbb{S}_{+}^{N}$. Then for $s\leq t$, the truncated string $\mathcal{R}^{s}$ of length $s$ is given by
\begin{equation}
\label{def_trunc2}
\mathcal{R}^{s}=\left(f_{i_{1}},\cdots, f_{i_{s}},P_{0}\right)
\end{equation}
\end{definition}

\begin{lemma}
\label{top_string}Let $\mathcal{F}\in\mathbb{S}_{+}^{N}$ be a
closed set. Define the set of strings
$\mathcal{U}\subset\mathcal{S}^{P^{\ast}}$ by
\begin{equation}
\label{top_string1000}
\mathcal{U}(\mathcal{F})=\left\{\mathcal{R}\in\mathcal{S}^{P^{\ast}}~|~\mathcal{N}(\mathcal{R})\in\mathcal{F}\right\}
\end{equation}
and let
\begin{equation}
\label{top_string1}
\ell(\mathcal{F})=\inf_{\mathcal{R}\in\mathcal{U}(\mathcal{F})}\pi(\mathcal{R})
\end{equation}
(we adopt the convention that the infimum of an empty set is $\infty$.)
Then, if $\ell(\mathcal{F})<\infty$, there exists $t_{\mathcal{F}}\in\mathbb{T}_{+}$ sufficiently large, such that, for all $\mathcal{R}\in\mathcal{U}(\mathcal{F})$ with $\mbox{len}(\mathcal{R})\geq t_{\mathcal{F}}$, we have
\begin{equation}
\label{top_string4} \pi(\mathcal{R}^{t_{\mathcal{F}}})\geq\ell(\mathcal{F})
\end{equation}
\end{lemma}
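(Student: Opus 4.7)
I would argue by contradiction. Assume no such $t_{\mathcal{F}}$ exists, so that for every $n \in \mathbb{N}$ there is $\mathcal{R}_n \in \mathcal{U}(\mathcal{F})$ with $\mbox{len}(\mathcal{R}_n) \geq n$ and $\pi(\mathcal{R}_n^n) < \ell(\mathcal{F})$. Because $\pi(\mathcal{R}_n^n)$ takes only the finitely many values $\{0,1,\ldots,\ell(\mathcal{F})-1\}$, pigeonhole lets me pass to a subsequence on which $\pi(\mathcal{R}_n^n) = k$ for some fixed $k < \ell(\mathcal{F})$. The goal is then to exhibit, as a limit of $\mathcal{N}(\mathcal{R}_n) \in \mathcal{F}$, an element of $\mathcal{N}(\mathcal{S}^{P^{\ast}})$ whose string representation uses at most $k$ copies of $f_0$, contradicting the definition of $\ell(\mathcal{F})$.

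Next I would express the outer prefix in block-compressed form, $\mathcal{R}_n^n = (f_1^{\alpha_0^n}, f_0, f_1^{\alpha_1^n}, f_0, \ldots, f_0, f_1^{\alpha_k^n}, P^{\ast})$ with $\sum_{l=0}^{k} \alpha_l^n = n - k$, so that
\[
\mathcal{N}(\mathcal{R}_n) = f_1^{\alpha_0^n} \circ f_0 \circ f_1^{\alpha_1^n} \circ f_0 \circ \cdots \circ f_0 \circ f_1^{\alpha_k^n}(X_n),
\]
where $X_n \succeq P^{\ast}$ is the value produced by the inner tail of $\mathcal{R}_n$ (positions $n{+}1$ through $\mbox{len}(\mathcal{R}_n)$) applied to $P^{\ast}$; here I use that $f_0$ and $f_1$ both preserve the cone $\{W \succeq P^{\ast}\}$. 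Since the block lengths sum to $n-k \to \infty$ over only $k+1$ blocks, at least one must diverge along a subsequence. A finite diagonal extraction then gives a further subsequence on which every $\alpha_l^n$ either stabilises to a constant $\alpha_l \in \mathbb{T}_+$ or satisfies $\alpha_l^n \to \infty$; put $l^{\ast} = \min\{l \in \{0,\ldots,k\} : \alpha_l^n \to \infty\}$.

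The crucial input is Lemma~\ref{unif_conv}: $f_1^s(W) \to P^{\ast}$ as $s \to \infty$, uniformly in $W$ over $\{W \succeq P^{\ast}\}$. Applying this to the $l^{\ast}$-th block, the inner composition (everything to the right of $f_1^{\alpha_{l^{\ast}}^n}$ in the string) maps $X_n$ to some $V_n \succeq P^{\ast}$, and then $f_1^{\alpha_{l^{\ast}}^n}(V_n) \to P^{\ast}$ regardless of where $V_n$ sits. The outer composition (indices $l < l^{\ast}$, with stabilised exponents $\alpha_l$) is a single fixed continuous map $h$, so $\mathcal{N}(\mathcal{R}_n) \to h(P^{\ast})$, and $h(P^{\ast})$ is by construction the numerical value of a string in $\mathcal{S}^{P^{\ast}}$ with exactly $l^{\ast}$ copies of $f_0$. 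Since $\mathcal{F}$ is closed, $h(P^{\ast}) \in \mathcal{F}$, hence $\ell(\mathcal{F}) \leq l^{\ast} \leq k$, contradicting $k < \ell(\mathcal{F})$.

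The principal obstacle I anticipate is that the inner state $X_n$ need not be bounded — Proposition~\ref{string_prop}(iii) only controls it through the uncontrolled $\pi$-count of the inner tail — so compactness cannot directly yield a convergent subsequence of $X_n$. This is precisely what Lemma~\ref{unif_conv} resolves: its \emph{uniform} convergence on $\{W \succeq P^{\ast}\}$ means that any divergent block $\alpha_{l^{\ast}}^n$ washes out all dependence on the innermost state. The boundary cases $l^{\ast}=0$ (where $h = \mbox{id}$ and the limit is $P^{\ast}$ itself, forcing $\ell(\mathcal{F})=0$) and $l^{\ast}=k$ (where the reset happens already at the innermost block $\alpha_k^n$) fit within the same framework.
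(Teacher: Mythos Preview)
Your proof is correct. It shares the paper's core mechanism---contradiction, the uniform convergence of Riccati iterates (Lemma~\ref{unif_conv}) to locate a long $f_1$-block that resets the state to $P^{\ast}$, and closedness of $\mathcal{F}$ to trap the limit---but organizes the argument differently. The paper proceeds by induction on $i=1,\ldots,\ell(\mathcal{F})$: at the step from $k$ to $k+1$, the inductive hypothesis (existence of $t_{\mathcal{F}}^{k}$) forces all $k$ copies of $f_0$ into a prefix of \emph{fixed} length $t_{\mathcal{F}}^{k}$, so positions $t_{\mathcal{F}}^{k}+1,\ldots,t$ form a single long $f_1$-block; a pigeonhole over the finitely many prefix patterns of length $t_{\mathcal{F}}^{k}$ then yields a fixed outer map whose value at $P^{\ast}$ lies in $\mathcal{F}$. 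You collapse this induction into one shot: block-decompose the length-$n$ prefix, diagonally extract so that each of the $k+1$ block lengths either stabilizes or diverges, and take $l^{\ast}$ to be the leftmost diverging block. Your route is slicker and even yields the sharper bound $\ell(\mathcal{F})\leq l^{\ast}\leq k$; the paper's induction, on the other hand, is marginally more elementary in that it needs only pigeonhole on a finite pattern set rather than a diagonal extraction across $k+1$ integer sequences.
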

For a proof see Appendix~\ref{proof_inv_MDP}.
We present the following remark.
\begin{remark}
\label{rem_top_string}
\begin{itemize}
\item [(i)] It follows from the definitions that
\begin{equation}
\label{top_string5}
\mathcal{U}(\mathcal{F})=\cup_{X\in\mathcal{F}}\mathcal{S}^{P^{\ast}}(X)
\end{equation}
and hence
\begin{equation}
\label{top_string6}
\ell(\mathcal{F})=\inf_{X\in\mathcal{F}}\inf_{\mathcal{R}\in\mathcal{S}^{P^{\ast}}(X)}\pi(\mathcal{R})=\inf_{X\in\mathcal{F}}I(X)
\end{equation}

\item [(ii)] If $\ell(\mathcal{F})<\infty$, i.e., the set $\mathcal{U}(\mathcal{F})$ is non-empty, the infimum in (\ref{top_string1}) is attained, i.e., there exists $\mathcal{R}^{\ast}\in\mathcal{U}(\mathcal{F})$, such that,
    \begin{equation}
    \label{top_string7}
    \pi(\mathcal{R}^{\ast})=\ell(\mathcal{F})
    \end{equation}
    This follows from the fact that the function $\pi(\cdot)$ takes only a countable number of values. Similarly, in the case $\ell(\mathcal{F})<\infty$, if we define $X^{\ast}=\mathcal{N}(\mathcal{R}^{\ast})$, then
    \begin{equation}
    \label{top_string8}
    I(X^{\ast})=\inf_{X\in\mathcal{F}}I(X)
    \end{equation}
 %

\end{itemize}
\end{remark}

We now prove the MDP upper bound for the family $\{\mathbb{\mu}^{\overline{\gamma}}\}$ as $\overline{\gamma}\uparrow 1$ over compact sets.

\begin{lemma}
\label{upper}
Let $K\in \mathcal{B}(\mathbb{S}_{+}^{N})$ be a compact set. Then the following upper bound holds:
\begin{equation}
\label{upper1}
\limsup_{\overline{\gamma}\uparrow 1}-\frac{1}{\ln(1-\overline{\gamma})}\ln\mathbb{\mu}^{\overline{\gamma}}\left(K\right)\leq-\inf_{X\in
K}I_{L}(X)
\end{equation}
\end{lemma}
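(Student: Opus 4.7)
The plan is to bound $\mu^{\overline{\gamma}}(K)$ from above by transferring the problem to a finite-time probability along the string ensemble and then applying the truncation control of Lemma~\ref{top_string}. Since Theorem~\ref{weakconv1} guarantees $\mu_{t}^{\overline{\gamma},P^{\ast}}\Rightarrow\mu^{\overline{\gamma}}$ as $t\to\infty$ when we start from $P_{0}=P^{\ast}$, I would first exploit Portmanteau on an open enlargement. Fix $\varepsilon>0$ and note that $K\subset K_{\varepsilon}\subset\overline{K_{\varepsilon}}$, so
\[
\mu^{\overline{\gamma}}(K)\;\leq\;\mu^{\overline{\gamma}}(K_{\varepsilon})\;\leq\;\liminf_{t\to\infty}\mathbb{P}^{\overline{\gamma},P^{\ast}}\bigl(P_{t}\in K_{\varepsilon}\bigr)\;\leq\;\liminf_{t\to\infty}\mathbb{P}^{\overline{\gamma},P^{\ast}}\bigl(P_{t}\in\overline{K_{\varepsilon}}\bigr),
\]
where the second inequality is the Portmanteau lower bound applied to the open set $K_{\varepsilon}$.

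Next I would control the right-hand side via Lemma~\ref{top_string} applied to the closed set $\overline{K_{\varepsilon}}$. If $\ell(\overline{K_{\varepsilon}})=\infty$, then $\mathcal{U}(\overline{K_{\varepsilon}})=\emptyset$, so no string in $\mathcal{S}^{P^{\ast}}$ ever lands in $\overline{K_{\varepsilon}}$; since $P_{t}\in\mathcal{N}(\mathcal{S}^{P^{\ast}})$ almost surely when $P_{0}=P^{\ast}$, the probability above vanishes identically, $\mu^{\overline{\gamma}}(K)=0$, and the claimed bound is trivial. Otherwise the lemma yields $t_{\varepsilon}$ such that for every $t\geq t_{\varepsilon}$, the event $\{P_{t}\in\overline{K_{\varepsilon}}\}$ forces the random string $\mathcal{R}_{t}\in\mathcal{S}^{P^{\ast}}_{t}$ driving $P_{t}$ to satisfy $\pi(\mathcal{R}_{t}^{t_{\varepsilon}})\geq\ell(\overline{K_{\varepsilon}})$. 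Since the truncation $\mathcal{R}_{t}^{t_{\varepsilon}}$ depends only on the $t_{\varepsilon}$ most recent Bernoullis $\gamma_{t-1},\ldots,\gamma_{t-t_{\varepsilon}}$, one obtains
\[
\mathbb{P}^{\overline{\gamma},P^{\ast}}\bigl(P_{t}\in\overline{K_{\varepsilon}}\bigr)\;\leq\;\sum_{k=\ell(\overline{K_{\varepsilon}})}^{t_{\varepsilon}}\binom{t_{\varepsilon}}{k}(1-\overline{\gamma})^{k}\overline{\gamma}^{\,t_{\varepsilon}-k},\qquad t\geq t_{\varepsilon},
\]
a bound uniform in $t$ and therefore also a bound on $\mu^{\overline{\gamma}}(K)$.

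Passage to the MDP scale proceeds by Proposition~\ref{lim_num}: each summand decays like $(1-\overline{\gamma})^{k}$, so the minimum exponent is $\ell(\overline{K_{\varepsilon}})$, and I obtain (using Remark~\ref{rem_top_string}~(i) together with the pointwise bound $I_{L}\leq I$)
\[
\limsup_{\overline{\gamma}\uparrow 1}-\frac{\ln\mu^{\overline{\gamma}}(K)}{\ln(1-\overline{\gamma})}\;\leq\;-\ell(\overline{K_{\varepsilon}})\;=\;-\inf_{Y\in\overline{K_{\varepsilon}}}I(Y)\;\leq\;-\inf_{Y\in\overline{K_{\varepsilon}}}I_{L}(Y).
\]
Since the left-hand side is independent of $\varepsilon$, I would send $\varepsilon\downarrow 0$ and invoke Proposition~\ref{inv_MDP_rate_prop}~(iv) to collapse $\inf_{\overline{K_{\varepsilon}}}I_{L}$ to $\inf_{K}I_{L}$, completing the proof.

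The main obstacle is that the direct Portmanteau estimate for closed sets, $\limsup_{t}\mu_{t}(F)\leq\mu(F)$, runs in the \emph{wrong} direction for an upper bound on $\mu(K)$, which forces the detour through the open enlargement $K_{\varepsilon}$ and hence the introduction of a gap $\inf_{\overline{K_{\varepsilon}}}I_{L}$ in place of $\inf_{K}I_{L}$. Closing this gap is exactly where compactness of $K$ is indispensable, since it is the hypothesis of Proposition~\ref{inv_MDP_rate_prop}~(iv) that delivers $\inf_{\overline{K_{\varepsilon}}}I_{L}\to\inf_{K}I_{L}$; this is also why the general closed-set upper bound is deferred to Lemma~\ref{MDP_upper} after the tightness argument of Lemma~\ref{tight}.
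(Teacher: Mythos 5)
Your proposal is correct and follows essentially the same route as the paper's proof: the Portmanteau detour through the open enlargement $K_{\varepsilon}$, Lemma~\ref{top_string} applied to $\overline{K_{\varepsilon}}$ to force at least $\ell(\overline{K_{\varepsilon}})$ occurrences of $f_{0}$ in the truncated (most recent) segment, a binomial-type bound uniform in $t$, and then Proposition~\ref{inv_MDP_rate_prop} (iii)--(iv) to pass from $\ell(\overline{K_{\varepsilon}})=\inf_{\overline{K_{\varepsilon}}}I$ to $\inf_{K}I_{L}$ as $\varepsilon\downarrow 0$. The only cosmetic difference is that you use the exact binomial tail plus Proposition~\ref{lim_num} where the paper uses the cruder bound ${t_{\overline{K_{\varepsilon}}}\choose\ell(\overline{K_{\varepsilon}})}(1-\overline{\gamma})^{\ell(\overline{K_{\varepsilon}})}$, which changes nothing.
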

\begin{proof}
For every $\varepsilon>0$, define the $\varepsilon$-closure $\overline{K_{\varepsilon}}$ and the $\varepsilon$-neighborhood $K_{\varepsilon}$ of $K$ by
\begin{equation}
\label{upper2}
\overline{K_{\varepsilon}}=\left\{X\in\mathbb{S}_{+}^{N}~|~\inf_{Y\in K}\left\|X-Y\right\|\leq\varepsilon\right\}
\end{equation}
\begin{equation}
\label{upper3}
K_{\varepsilon}=\left\{X\in\mathbb{S}_{+}^{N}~|~\inf_{Y\in  K}\left\|X-Y\right\|<\varepsilon\right\}
\end{equation}
Since $K_{\varepsilon}$ is open, we have by the weak convergence of the sequence $\{P_{t}^{\overline{\gamma},P^{\ast}}\}_{t\in\mathbb{T}_{+}}$ to $\mathbb{\mu}^{\overline{\gamma}}$
\begin{equation}
\label{upper4}
\liminf_{t\rightarrow\infty}\mathbb{P}^{\overline{\gamma},P^{\ast}}\left(P_{t}^{\overline{\gamma},P^{\ast}}\in   K_{\varepsilon}\right)\geq\mathbb{\mu}^{\overline{\gamma}}\left(K_{\varepsilon}\right)
\end{equation}
which in turn implies
\begin{equation}
\label{upper5}
\liminf_{t\rightarrow\infty}\mathbb{P}^{\overline{\gamma},P^{\ast}}\left(P_{t}^{\overline{\gamma},P^{\ast}}\in\overline{K_{\varepsilon}}\right)\geq\mathbb{\mu}^{\overline{\gamma}}\left(K\right)
\end{equation}
We now estimate the probabilities on the L.H.S. of (\ref{upper5}). Since $\overline{K_{\varepsilon}}$ is closed, the results of Lemma~\ref{top_string} apply and recall the objects $\mathcal{U}(\cdot)$ and $\ell(\cdot)$ defined for any closed set $\mathcal{F}$ as:
\begin{equation}
\label{upper6}
\mathcal{U}(\mathcal{F})=\left\{\mathcal{R}\in\mathcal{S}^{P^{\ast}}~|~\mathcal{N}(\mathcal{R})\in\mathcal{F}\right\}
\end{equation}
\begin{equation}
\label{upper7}
\ell(\mathcal{F})=\inf_{\mathcal{R}\in\mathcal{U}(\mathcal{F})}\pi(\mathcal{R})
\end{equation}
with the convention that $\ell(\mathcal{F})=\infty$ if $\mathcal{U}(\mathcal{F})$ is empty. Also, for every $t\in\mathbb{T}_{+}$ and closed set $\mathcal{F}$ define the sets:
\begin{equation}
\label{upper8}
\mathcal{U}^{t}(\mathcal{F})=\mathcal{U}(\mathcal{F})\cap\mathcal{S}_{t}^{P^{\ast}}
\end{equation}
To establish the Lemma we may consider two cases, as to whether $\ell(K)<\infty$ (i.e, $\mathcal{K}$ is non-empty) or not. We first consider the non-trivial case $\ell(K)<\infty$.

To this end, consider fixed $\varepsilon>0$ and from Proposition~\ref{string_prop} Assertion (ii) it is easy to see that
\begin{equation}
\label{upper9}
\mathbb{P}^{\overline{\gamma},P^{\ast}}\left(P_{t}^{\overline{\gamma},P^{\ast}}\in\overline{K_{\varepsilon}}\right)=\mathbb{P}^{\overline{\gamma},P^{\ast}}\left(P_{t}^{\overline{\gamma},P^{\ast}}\in\mathcal{N}\left(\mathcal{U}^{t}(\overline{K_{\varepsilon}})\right)\right)=1
\end{equation}
Since $K\subset\overline{K_{\varepsilon}}$ and $\ell({K})<\infty$, we have $\ell(\overline{K_{\varepsilon}})<\infty$. The fact that $\overline{K_{\varepsilon}}$ is closed and Lemma~\ref{top_string} imply there exists $t_{\overline{K_{\varepsilon}}}\in\mathbb{T}_{+}$, such that, for every string $\mathcal{R}\in\mathcal{U}(\overline{K_{\varepsilon}})$ with $\mbox{len}(\mathcal{R})\geq t_{\overline{K_{\varepsilon}}}$, we have $\pi\left(\mathcal{R}^{t_{\overline{K_{\varepsilon}}}}\right)\geq\ell(K_{\varepsilon})$.
In other words, we have for all $t\geq t_{\overline{K_{\varepsilon}}}$,
\begin{equation}
\label{upper11}
\pi\left(\mathcal{R}^{t_{\overline{K_{\varepsilon}}}}\right)\geq\ell(K_{\varepsilon}),~~~\forall\mathcal{R}\in\mathcal{U}^{t}(\overline{K_{\varepsilon}})
\end{equation}
Now consider $t\geq t_{\overline{K_{\varepsilon}}}$ and define the set of strings $\mathcal{J}_{t}^{P^{\ast}}$ by
\begin{equation}
\label{upper12}
\mathcal{J}_{t}^{P^{\ast}}=\left\{\left.\mathcal{R}\in\mathcal{S}_{t}^{P^{\ast}}\right|\pi\left(\mathcal{R}^{t_{\overline{K_{\varepsilon}}}}\right)\geq\ell(K_{\varepsilon})\right\}
\end{equation}
In other words, the set $\mathcal{J}_{t}^{P^{\ast}}$ consists of all strings $\mathcal{R}$ of length $t$, such that there are at least $\ell(\overline{K_{\varepsilon}})$ occurrences of $f_{0}$'s in the truncated string $\mathcal{R}^{t_{\overline{K_{\varepsilon}}}}$.
The following inclusion is then obvious for $t\geq t_{\overline{K_{\varepsilon}}}$:
\begin{equation}
\label{upper13}
\mathcal{U}^{t}(\overline{K_{\varepsilon}})\subset\mathcal{J}_{t}^{P^{\ast}}\subset\mathcal{S}_{t}^{P^{\ast}}
\end{equation}
By the Markovian dynamics of the RRE, it is clear, that for $t\geq t_{\overline{K_{\varepsilon}}}$
\begin{equation}
\label{upper14}
\mathbb{P}^{\overline{\gamma},P^{\ast}}\left(P_{t}^{\overline{\gamma},P^{\ast}}\in\mathcal{N}(\mathcal{J}_{t}^{P^{\ast}})\right) = \sum_{\mathcal{R}\in\mathcal{J}_{t}^{P^{\ast}}}(1-\overline{\gamma})^{\pi(\mathcal{R})}\overline{\gamma}^{t-\pi(\mathcal{R})}\leq {t_{\overline{K_{\varepsilon}}}\choose\ell(\overline{K_{\varepsilon}})}(1-\overline{\gamma})^{\ell(\overline{K_{\varepsilon}})}
\end{equation}
We then have from eqns.~(\ref{upper9},\ref{upper13})
\begin{equation}
\label{upper15}\mathbb{\mu}^{\overline{\gamma}}(K) \leq  \liminf_{t\rightarrow\infty}\mathbb{P}^{\overline{\gamma},P^{\ast}}\left(P_{t}^{\overline{\gamma},P^{\ast}}\in\mathcal{N}(\mathcal{U}_{t}(\overline{K_{\varepsilon}}))\right)\leq  \liminf_{t\rightarrow\infty}\mathbb{P}^{\overline{\gamma},P^{\ast}}\left(P_{t}^{\overline{\gamma},P^{\ast}}\in\mathcal{N}(\mathcal{J}_{t}^{P^{\ast}})\right)\leq {t_{\overline{K_{\varepsilon}}}\choose\ell(\overline{K_{\varepsilon}})}(1-\overline{\gamma})^{\ell(\overline{K_{\varepsilon}})}
\end{equation}
Taking the log-limits on both sides and noting that $\ln(1-\overline{\gamma})$ is negative, $t_{\overline{K_{\varepsilon}}}$ is independent of $\overline{\gamma}$ we have
\begin{equation}
\label{upper16}
\limsup_{\overline{\gamma}\uparrow 1}-\frac{1}{\ln(1-\overline{\gamma})}\ln\mathbb{\mu}^{\overline{\gamma}}\left(K\right)\leq-\ln\left({t_{\overline{K_{\varepsilon}}}\choose\ell(\overline{K_{\varepsilon}})}\right)\lim_{\overline{\gamma}\uparrow 1}\frac{1}{\ln(1-\overline{\gamma})}-\lim_{\overline{\gamma}\uparrow 1}\ell(\overline{K_{\varepsilon}})= -\ell(\overline{K_{\varepsilon}})
\end{equation}
Taking the limit on both sides as $\varepsilon\rightarrow 0$ we have
\begin{equation}
\label{upper17}
\limsup_{\overline{\gamma}\uparrow 1}-\frac{1}{\ln(1-\overline{\gamma})}\ln\mathbb{\mu}^{\overline{\gamma}}\left(K\right)\leq-\lim_{\varepsilon\rightarrow 0}\ell(\overline{K_{\varepsilon}})
\end{equation}
From Lemma~\ref{inv_MDP_rate_prop} (Assertion (iii)) we have for all $\varepsilon>0$
\begin{equation}
\label{upper18}
\ell(\overline{K_{\varepsilon}})=\inf_{Y\in\overline{K_{\varepsilon}}}I(Y)\geq\inf_{Y\in\overline{K_{\varepsilon}}}I_{L}(Y)
\end{equation}
Taking the limit and using Lemma~\ref{inv_MDP_rate_prop} (Assertion (iv)) we have
\begin{equation}
\label{upper19}
\lim_{\varepsilon\rightarrow 0}\ell(\overline{K_{\varepsilon}})\geq\lim_{\varepsilon\rightarrow 0}\inf_{Y\in\overline{K_{\varepsilon}}}I_{L}(Y)=\inf_{Y\in K}I_{L}(Y)
\end{equation}
The Lemma then follows from eqns.~(\ref{upper17},\ref{upper19}).
\end{proof}

The following tightness result enables us to extend the upper bound from compact sets to arbitrary closed sets (see Appendix~\ref{proof_inv_MDP} for a proof.)
\begin{lemma}
\label{tight}
The family of invariant distributions $\left\{\mathbb{\mu}^{\overline{\gamma}}\right\}$ satisfies the following tightness property: For every $a>0$, there exists a compact set $K_{a}\subset\mathbb{S}_{+}^{N}$, such that
\begin{equation}
\label{tight1}
\limsup_{\overline{\gamma}\uparrow 1}-\frac{1}{\ln(1-\overline{\gamma})}\mathbb{\mu}^{\overline{\gamma}}(K_{a}^{C})\leq -a
\end{equation}
\end{lemma}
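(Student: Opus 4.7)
The plan is to exhibit, for each $a>0$, a single compact set $K_a$ and prove a uniform-in-time estimate of the form $\mathbb{P}^{\overline{\gamma},P^{\ast}}(P_t\in K_a^C)\leq C_a(1-\overline{\gamma})^a$ for all sufficiently large $t$; the bound on $\mathbb{\mu}^{\overline{\gamma}}(K_a^C)$ then follows from the Portmanteau inequality $\mathbb{\mu}^{\overline{\gamma}}(K_a^C)\leq\liminf_{t}\mathbb{P}^{\overline{\gamma},P^{\ast}}(P_t\in K_a^C)$, which is valid because $K_a^C$ is open and $P_t\Rightarrow \mathbb{\mu}^{\overline{\gamma}}$ by Theorem~\ref{weakconv1}. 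I would start the RRE from the deterministic fixed point $P^{\ast}$, since $f_1(P^{\ast})=P^{\ast}$ together with $f_0(P^{\ast})\succeq P^{\ast}$ and the monotonicity of $f_0,f_1$ on $\mathbb{S}_{+}^{N}$ yield $P_t\succeq P^{\ast}$ for every $t$ almost surely, which is exactly the hypothesis needed to invoke the uniform contraction of Lemma~\ref{unif_conv}.

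To construct $K_a$, I would first fix any convenient $\varepsilon>0$ and use Lemma~\ref{unif_conv} to pick $t_{\varepsilon}$ such that $\|f_1^{t_{\varepsilon}}(X)-P^{\ast}\|\leq\varepsilon$ for every $X\succeq P^{\ast}$. Assume without loss of generality that $a$ is a positive integer (otherwise replace $a$ by $\lceil a\rceil$), set $T=a\,t_{\varepsilon}$, and define
\begin{equation}
K_a \;=\; \bigcup_{s=0}^{T}\;\bigcup_{(i_{1},\ldots,i_{s})\in\{0,1\}^{s}}\bigl(f_{i_{1}}\circ\cdots\circ f_{i_{s}}\bigr)\bigl(\overline{B}_{\varepsilon}(P^{\ast})\bigr),
\end{equation}
which is a finite union of continuous images of the compact ball $\overline{B}_{\varepsilon}(P^{\ast})$ and hence compact. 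The key deterministic step is then: for $t\geq T$, partition the trailing Bernoulli indicators $\gamma_{t-T},\ldots,\gamma_{t-1}$ into $a$ disjoint consecutive blocks of length $t_{\varepsilon}$, and observe that if the $j^{\ast}$-th block consists entirely of ones, then Lemma~\ref{unif_conv} applied to $P_{t-T+j^{\ast}t_{\varepsilon}}\succeq P^{\ast}$ forces $P_{t-T+(j^{\ast}+1)t_{\varepsilon}}\in\overline{B}_{\varepsilon}(P^{\ast})$; the remaining at most $T-t_{\varepsilon}\leq T$ operators drawn from $\{f_0,f_1\}$ then push $P_t$ into $K_a$ by construction.

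Next I would bound the probability of the complementary event. Since the $a$ blocks are disjoint and the $\gamma_t$ are i.i.d., the probability that no block is entirely ones equals $(1-\overline{\gamma}^{\,t_{\varepsilon}})^{a}$, and by Bernoulli's inequality this is at most $(t_{\varepsilon}(1-\overline{\gamma}))^{a}=t_{\varepsilon}^{a}(1-\overline{\gamma})^{a}$. Thus $\mathbb{P}^{\overline{\gamma},P^{\ast}}(P_t\in K_a^C)\leq t_{\varepsilon}^{a}(1-\overline{\gamma})^{a}$ uniformly in $t\geq T$. Applying Portmanteau delivers $\mathbb{\mu}^{\overline{\gamma}}(K_a^C)\leq t_{\varepsilon}^{a}(1-\overline{\gamma})^{a}$, and dividing $\ln\mathbb{\mu}^{\overline{\gamma}}(K_a^C)$ by $\ln(1-\overline{\gamma})<0$ yields
\begin{equation}
\liminf_{\overline{\gamma}\uparrow 1}\,\frac{\ln\mathbb{\mu}^{\overline{\gamma}}(K_a^C)}{\ln(1-\overline{\gamma})}\;\geq\; a+\lim_{\overline{\gamma}\uparrow 1}\frac{a\ln t_{\varepsilon}}{\ln(1-\overline{\gamma})}\;=\;a,
\end{equation}
which is precisely the $\limsup\leq -a$ statement of the lemma (up to the evident $\ln$ in the display).

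The main difficulty is more conceptual than technical: one must engineer $a$ independent chances for a stabilizing run of $t_{\varepsilon}$ consecutive observations to occur within a bounded-length window, so that their joint failure probability scales as $(1-\overline{\gamma})^{a}$. The disjoint-block partition handles the independence cleanly, and the a.s.\ inequality $P_t\succeq P^{\ast}$ (guaranteed by starting from $P^{\ast}$) is what unlocks Lemma~\ref{unif_conv} and converts each all-ones block into a genuine deterministic reset into $\overline{B}_{\varepsilon}(P^{\ast})$.
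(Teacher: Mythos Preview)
Your argument is correct and constitutes a genuinely different proof from the paper's. The paper chooses $K_a$ to be a simple norm ball $\{X:\|X\|\leq b\}$ with $b>\|f_0^{z}(P^{\ast})\|$ for an integer $z\geq a$, and then works entirely through the string formalism: Proposition~\ref{string_prop}(iii) guarantees that every string $\mathcal{R}\in\mathcal{S}^{P^{\ast}}$ with $\pi(\mathcal{R})<z$ has $\mathcal{N}(\mathcal{R})$ inside this ball, so any string hitting $K_a^C$ must contain at least $z$ occurrences of $f_0$; Lemma~\ref{top_string} is then invoked to localize these $z$ occurrences to a fixed window of length $t_{\mathcal{F}_b}$, producing the bound $\binom{t_{\mathcal{F}_b}}{z}(1-\overline{\gamma})^{z}$. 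Your proof bypasses both of these structural lemmas. Instead, you build $K_a$ as the finite union of forward images of a small ball around $P^{\ast}$, and you obtain the $(1-\overline{\gamma})^{a}$ decay directly from $a$ independent chances for a stabilizing run, relying only on Lemma~\ref{unif_conv} and block independence. The trade-off is that your $K_a$ is a slightly more complicated set (though still manifestly compact), while the paper's argument keeps $K_a$ simple but must appeal to the heavier machinery of Lemma~\ref{top_string}. Your route is more elementary and self-contained; the paper's route stays within the string-counting framework that drives the rest of Section~\ref{inv_MDP}. Both lead to the same tightness scale, and your observation about the missing $\ln$ in the lemma display is also correct.
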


We now complete the proof of the MDP upper bound for arbitrary closed sets by the upper bound for compact sets and the tightness estimate obtained in Lemma~\ref{tight}. This has parallels with the theory of large deviations, where one establishes the LDP upper bound first for compact sets. The tightness analogue of Lemma~\ref{tight} here is called \emph{exponential tightness} in the context of LDP. It can be shown that the LDP upper bound for closed sets follows from that of compact sets and exponential tightness, see, for example,~\cite{DeuschelStroock}. Here, although we are concerned with a MDP, the proof philosophy is related, i.e., we first establish the MDP upper bound for compact sets and then use the tightness estimate of Lemma~\ref{tight} to extend it to arbitrary closed sets.

\begin{lemma}
\label{MDP_upper}
Let $\mathcal{F}\in \mathcal{B}(\mathbb{S}_{+}^{N})$ be a closed set. Then the following upper bound holds:
\begin{equation}
\label{MDP_upper1}
\limsup_{\overline{\gamma}\uparrow 1}-\frac{1}{\ln(1-\overline{\gamma})}\ln\mathbb{\mu}^{\overline{\gamma}}\left(\mathcal{F}\right)\leq-\inf_{X\in
\mathcal{F}}I_{L}(X)
\end{equation}
\end{lemma}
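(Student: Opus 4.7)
\textbf{Proof proposal for Lemma~\ref{MDP_upper}.} The plan is to reduce the closed-set bound to the compact-set bound of Lemma~\ref{upper} by absorbing the mass lying outside a large compact set via the tightness estimate of Lemma~\ref{tight}. The philosophy is identical to extending the LDP upper bound from compact to closed sets using exponential tightness, as is standard in large deviations theory.

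Fix $a>0$ and let $K_a\subset\mathbb{S}^N_+$ be the compact set furnished by Lemma~\ref{tight}, satisfying
\[
\limsup_{\overline{\gamma}\uparrow 1}-\frac{1}{\ln(1-\overline{\gamma})}\ln\mathbb{\mu}^{\overline{\gamma}}(K_a^C)\leq -a.
\]
Since $\mathcal{F}\cap K_a$ is compact (closed intersected with compact), Lemma~\ref{upper} gives
\[
\limsup_{\overline{\gamma}\uparrow 1}-\frac{1}{\ln(1-\overline{\gamma})}\ln\mathbb{\mu}^{\overline{\gamma}}(\mathcal{F}\cap K_a)\leq -\inf_{X\in \mathcal{F}\cap K_a}I_L(X)\leq -\inf_{X\in \mathcal{F}}I_L(X),
\]
where the last inequality follows from $\mathcal{F}\cap K_a\subset\mathcal{F}$. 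The decomposition $\mathbb{\mu}^{\overline{\gamma}}(\mathcal{F})\leq \mathbb{\mu}^{\overline{\gamma}}(\mathcal{F}\cap K_a)+\mathbb{\mu}^{\overline{\gamma}}(K_a^C)$ now writes $\mathbb{\mu}^{\overline{\gamma}}(\mathcal{F})$ as a finite sum of two terms with controlled decay rates.

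To combine the two estimates, invoke Proposition~\ref{lim_num}, or equivalently the elementary inequality $\ln(b+c)\leq \ln 2+\max(\ln b,\ln c)$ applied to $b=\mathbb{\mu}^{\overline{\gamma}}(\mathcal{F}\cap K_a)$ and $c=\mathbb{\mu}^{\overline{\gamma}}(K_a^C)$. Dividing by the negative quantity $\ln(1-\overline{\gamma})$ (which flips $\max$ into $\min$) and using that $\ln 2/\ln(1-\overline{\gamma})\to 0$, one obtains
\[
\limsup_{\overline{\gamma}\uparrow 1}-\frac{1}{\ln(1-\overline{\gamma})}\ln\mathbb{\mu}^{\overline{\gamma}}(\mathcal{F})\leq \max\!\left(-\inf_{X\in\mathcal{F}}I_L(X),\ -a\right).
\]
This inequality holds for every $a>0$ (with a different compact set $K_a$ for each $a$). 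Letting $a\uparrow\infty$ yields the desired bound: if $\inf_{\mathcal{F}}I_L<\infty$, the maximum equals $-\inf_{\mathcal{F}}I_L$ for all $a$ sufficiently large, while if $\inf_{\mathcal{F}}I_L=\infty$, the maximum reduces to $-a$, which tends to $-\infty$. This proves \eqref{MDP_upper1}.

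The main obstacle here is essentially already dispatched by the preceding lemmas: Lemma~\ref{upper} provides the compact upper bound, and Lemma~\ref{tight} furnishes the tightness needed to extend it. The only genuinely new step is the bookkeeping with signs (since $\ln(1-\overline{\gamma})<0$ inverts several inequalities) and justifying the interchange of limits in $\overline{\gamma}$ and $a$, which works because the inequality obtained for each fixed $a$ is uniform in that no constant depending on $a$ multiplies the leading $(1-\overline{\gamma})$-power in a way that survives the $\ln/\ln(1-\overline{\gamma})$ normalization.
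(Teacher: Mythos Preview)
Your proof is correct and follows essentially the same route as the paper: decompose $\mathcal{F}$ into $\mathcal{F}\cap K_a$ and $K_a^C$, apply Lemma~\ref{upper} to the compact piece and Lemma~\ref{tight} to the complement, combine via Proposition~\ref{lim_num} (equivalently the $\ln(b+c)\le\ln 2+\max(\ln b,\ln c)$ trick), and send $a\to\infty$. The only cosmetic difference is that you pass from $\inf_{\mathcal{F}\cap K_a}I_L$ to $\inf_{\mathcal{F}}I_L$ before taking the max, whereas the paper does so afterward.
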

\begin{proof}
Let $a>0$ be an arbitrary positive number. By the tightness estimate in Lemma~\ref{tight}, there exists a compact set $K_{a}\subset\mathbb{S}_{+}^{N}$, such that,
\begin{equation}
\label{MDP_upper2}
\limsup_{\overline{\gamma}\uparrow 1}-\frac{1}{\ln(1-\overline{\gamma})}\mathbb{\mu}^{\overline{\gamma}}(K_{a}^{C})\leq -a
\end{equation}
The set $\mathcal{F}\cap K_{a}$ is compact, being the intersection of a closed and a compact set, and hence the MDP upper bound holds from Lemma~\ref{upper}, i.e., we have
\begin{equation}
\label{MDP_upper3}
\limsup_{\overline{\gamma}\uparrow 1}-\frac{1}{\ln(1-\overline{\gamma})}\ln\mathbb{\mu}^{\overline{\gamma}}\left(\mathcal{F}\cap K_{a}\right)\leq-\inf_{X\in
\mathcal{F}\cap K_{a}}I_{L}(X)
\end{equation}
To estimate the probabilities $\mathbb{\mu}^{\overline{\gamma}}(\mathcal{F})$, we use the decomposition:
\begin{equation}
\label{MDP_upper4}
\mathbb{\mu}^{\overline{\gamma}}(\mathcal{F})=\mathbb{\mu}^{\overline{\gamma}}\left(\mathcal{F}\cap K_{a}\right)+\mathbb{\mu}^{\overline{\gamma}}\left(\mathcal{F}\cap K_{a}^{C}\right)\leq \mathbb{\mu}^{\overline{\gamma}}\left(\mathcal{F}\cap K_{a}\right)+ \mathbb{\mu}^{\overline{\gamma}}\left(K_{a}^{C}\right)
\end{equation}
By Lemma~\ref{lim_num} we then have
\begin{equation}
\label{MDP_upper5}
\limsup_{\overline{\gamma}\uparrow 1}-\frac{1}{\ln(1-\overline{\gamma})}\ln\mathbb{\mu}^{\overline{\gamma}}\left(\mathcal{F}\right)\leq\max\left(\limsup_{\overline{\gamma}\uparrow 1}-\frac{1}{\ln(1-\overline{\gamma})}\ln\mathbb{\mu}^{\overline{\gamma}}\left(\mathcal{F}\cap K_{a}^{C}\right),\limsup_{\overline{\gamma}\uparrow 1}-\frac{1}{\ln(1-\overline{\gamma})}\ln\mathbb{\mu}^{\overline{\gamma}}\left(K_{a}^{C}\right)\right)
\end{equation}
From eqns.~(\ref{upper2},\ref{upper3}) we then have
\begin{eqnarray}
\label{MDP_upper6}
\limsup_{\overline{\gamma}\uparrow 1}-\frac{1}{\ln(1-\overline{\gamma})}\ln\mathbb{\mu}^{\overline{\gamma}}\left(\mathcal{F}\right) & \leq & \max\left(-\inf_{X\in\mathcal{F}\cap K_{a}}I_{L}(X),-a\right)\nonumber \\ & \leq & \max\left(-\inf_{X\in\mathcal{F}}I_{L}(X),-a\right)\nonumber \\ & = & -\min\left(\inf_{X\in\mathcal{F}}I_{L}(X),a\right)
\end{eqnarray}
Since (\ref{MDP_upper6}) holds for all $a\in\mathbb{R}_{+}$, passing to the limit as $a\rightarrow\infty$ on both sides we obtain
\begin{equation}
\label{MDP_upper7}
\limsup_{\overline{\gamma}\uparrow 1}-\frac{1}{\ln(1-\overline{\gamma})}\ln\mathbb{\mu}^{\overline{\gamma}}\left(\mathcal{F}\right)\leq -\inf_{X\in\mathcal{F}}I_{L}(X)
\end{equation}
This establishes the MDP upper bound for arbitrary closed sets.
\end{proof}

\section{Proofs of Theorems}
\label{proof_theorem}
\begin{proof}[Theorem~\ref{thm:mdp}]
The MDP lower and upper bounds obtained in Lemma~\ref{inv_MDP_lower_lemma} and Lemma~\ref{MDP_upper} respectively show that the family $\left\{\mathbb{\mu}^{\overline{\gamma}}\right\}$ satisfies an MDP at scale $-\ln(1-\overline{\gamma})$ as $\overline{\gamma}\uparrow 1$ with a good rate function $I_{L}(\cdot)$. To complete the proof of Theorem~\ref{thm:mdp} it suffices to show that $I(\cdot)=I_{L}(\cdot)$, i.e., the function $I(\cdot)$ is lower semicontinuous. We now show that
\begin{equation}
\label{thm:mdp3}
I(X)=I_{L}(X),~~~\forall X\in\mathbb{S}_{+}^{N}
\end{equation}
Clearly, if $I_{L}(X)=\infty$, the claim in (\ref{thm:mdp3}) follows from (\ref{lower22}). We thus consider the case $I_{L}(X)<\infty$. Since
\begin{equation}
\label{thm:mdp4}
I_{L}(X)=\lim_{\varepsilon\rightarrow 0}\inf_{Y\in B_{\varepsilon}(X)}I(X)
\end{equation}
and the integer-valued quantity $\inf_{Y\in B_{\varepsilon}(X)}I(X)$ is non-decreasing w.r.t. $\varepsilon$, there exists $\varepsilon_{0}>0$, such that
\begin{equation}
\label{thm:mdp5}
\inf_{Y\in B_{\varepsilon}(X)}I(X)=I_{L}(X),~~~\forall \varepsilon\leq\varepsilon_{0}
\end{equation}
The infimum above is achieved for every $\varepsilon>0$, and we conclude that there exists a sequence $\{X_{n}\}_{n\in\mathbb{N}}$, such that
\begin{equation}
\label{thm:mdp6}
X_{n}\in \overline{B_{\varepsilon_{0}}}(X),~~~\lim_{n\rightarrow\infty}X_{n}=X,~~~I(X_{n})=I_{L}(X)
\end{equation}
Recall the set of strings
\begin{equation}
\label{thm:mdp7}
\mathcal{U}(\overline{B_{\varepsilon_{0}}}(X))=\{\mathcal{R}\in\mathcal{S}^{P^{\ast}}~|~\mathcal{N}(\mathcal{R})\in \overline{B_{\varepsilon_{0}}}(X)\}
\end{equation}
We then have
\begin{equation}
\label{thm:mdp8}
\ell(\overline{B_{\varepsilon_{0}}}(X))=\inf_{Y\in\overline{B_{\varepsilon_{0}}}(X)}I(X)=I_{L}(X)
\end{equation}
Since $\overline{B_{\varepsilon_{0}}}(X)$ is closed, by Lemma~\ref{top_string}, there exists $t_{0}\in\mathbb{T}_{+}$, such that, for $\mathcal{R}\in\mathcal{U}(\overline{B}_{\varepsilon_{0}}(X))$ with $\mbox{len}(\mathcal{R})\geq t_{0}$, we have
\begin{equation}
\label{thm:mdp9}
\pi\left(\mathcal{R}^{t_{0}}\right)\geq\ell(\overline{B}_{\varepsilon_{0}})=I_{L}(X)
\end{equation}
By the existence of $\{X_{n}\}$, there exists a sequence $\{\mathcal{R}_{n}\}$ of strings in $\mathcal{U}(\overline{B}_{\varepsilon_{0}})$, such that
\begin{equation}
\label{thm:mdp10}
\mathcal{N}\left(\mathcal{R}_{n}\right)=X_{n},~~~\pi\left(\mathcal{R}_{n}\right)=I_{L}(X)
\end{equation}
Note that, without loss of generality, we can assume that $\mbox{len}(\mathcal{R}_{n})=t_{0}$ for all $n$. Indeed, if $\mbox{len}(\mathcal{R}_{n})<t_{0}$, we can modify $\mathcal{R}_{n}$ by appending the requisite number of $f_{1}$'s at the right end, still satisfying (\ref{thm:mdp10}). On the other hand, if $\mbox{len}(\mathcal{R}_{n})>t_{0}$, we note that $\mathcal{R}_{n}$ must be of the form
\begin{equation}
\label{thm:mdp11}
\mathcal{R}_{n}=\left(f_{i_{1}},\cdots, f_{i_{t_{0}}}, f_{1}^{\mbox{len}(\mathcal{R}_{n})-t_{0}}, P^{\ast}\right)
\end{equation}
where the truncated string
\begin{equation}
\label{thm:mdp12}
\mathcal{R}_{n}^{t_{0}}=f_{i_{1}}\circ\cdots\circ f_{i_{t_{0}}}(P^{\ast})
\end{equation}
satisfies
\begin{equation}
\label{thm:mdp13}
\mathcal{N}\left(\mathcal{R}_{n}^{t_{0}}\right)=X_{n},~~~\pi\left(\mathcal{R}_{n}^{t_{0}}\right)=I_{L}(X)
\end{equation}
The second inequality in (\ref{thm:mdp13}) follows from (\ref{thm:mdp9}), whereas (\ref{thm:mdp12}) follows from the fact that $\pi(\mathcal{R}_{n})=I_{L}(X)$ implying
\begin{equation}
\label{thm:mdp14}
\pi(\mathcal{R}_{n})=\pi(\mathcal{R}_{n}^{t_{0}})
\end{equation}
Thus the right end of $\mathcal{R}_{n}$ does not contain any $f_{0}$, explaining the form in (\ref{thm:mdp12}). The key conclusion of the above discussion is that, if $\mbox{len}(\mathcal{R}_{n})>t_{0}$, we may consider the truncated string $\mathcal{R}_{n}^{t_{0}}$ instead, which also satisfies (\ref{thm:mdp10}).

We thus assume that the sequence $\{\mathcal{R}_{n}\}$ with the properties in (\ref{thm:mdp10}) satisfy:
\begin{equation}
\label{thm:mdp15}
\mbox{len}(\mathcal{R}_{n})=t_{0},~~~\forall n
\end{equation}
The number of distinct strings in the sequence $\{\mathcal{R}_{n}\}$ is at most $2^{t_{0}}$ (in fact, lesser than that, because of the constraint $\pi(\mathcal{R}_{n})=I_{L}(X)$) and, hence, at least one pattern is repeated infinitely often in the sequence $\{\mathcal{R}_{n}\}$, i.e., there exists a string $\mathcal{R}^{\ast}$, such that,
\begin{equation}
\label{thm:mdp16}
\mbox{len}(\mathcal{R}^{\ast})=t_{0},~~~\pi\left(\mathcal{R}^{\ast}\right)=I_{L}(X)
\end{equation}
and a subsequence $\{\mathcal{R}_{n_{k}}\}_{k\in\mathbb{N}}$ of $\{\mathcal{R}_{n}\}$, such that,
\begin{equation}
\label{thm:mdp17}
\mathcal{R}_{n_{k}}=\mathcal{R}^{\ast},~~~\forall k\in\mathbb{N}
\end{equation}
The corresponding subsequence $\{X_{n_{k}}\}$ of numerical values then satisfy
\begin{equation}
\label{thm:mdp18}
X_{n_{k}}=\mathcal{N}(\mathcal{R}_{n_{k}})=\mathcal{N}(\mathcal{R}^{\ast}),~~~\forall k\in\mathbb{N}
\end{equation}
and hence
\begin{equation}
\label{thm:mdp19}
X=\lim_{k\rightarrow\infty}X_{n_{k}}=\mathcal{N}(\mathcal{R}^{\ast})
\end{equation}
Thus the string $\mathcal{R}^{\ast}\in\mathcal{S}^{P^{\ast}}(X)$ and hence
\begin{equation}
\label{thm:mdp20}
I(X)=\inf_{\mathcal{R}\in\mathcal{S}^{P^{\ast}}(X)}\pi(\mathcal{R})\leq\pi\left(\mathcal{R}^{\ast}\right)=I_{L}(X)
\end{equation}
The other inequality $I_{L}(X)\leq I(X)$ is obvious and hence we conclude from (\ref{thm:mdp20}) that
\begin{equation}
\label{thm:21}
I_{L}(X)=I(X)
\end{equation}
This completes the proof of Theorem~\ref{thm:mdp}.
\end{proof}

%

\begin{proof}[Theorem~\ref{thm:convrate}]
Recall
\begin{equation}
\label{thm:convrate3}
d_{P}\left(\mathbb{\mu}^{\overline{\gamma}},\delta_{P^{\ast}}\right)=\inf\left\{\varepsilon>0~|~\delta_{P^{\ast}}(\mathcal{F})\leq \mathbb{\mu}^{\overline{\gamma}}(\mathcal{F}_{\varepsilon})+\varepsilon,~~~\forall~\mbox{closed set}~\mathcal{F}\right\}
\end{equation}
Define the class of sets
\begin{equation}
\label{thm:convrate5}
\mathcal{C}=\{\mathcal{F}~|~\mathcal{F}~\mbox{is closed and}~P^{\ast}\in\mathcal{F}\}
\end{equation}
Then the following equivalence is straight forward:
\begin{equation}
\label{thm:convrate6}
d_{P}\left(\mathbb{\mu}^{\overline{\gamma}},\delta_{P^{\ast}}\right)=\inf\left\{\varepsilon>0~|~ \mathbb{\mu}^{\overline{\gamma}}(\mathcal{F}_{\varepsilon})+\varepsilon\geq 1,~~~\forall~\mathcal{F}\in\mathcal{C}\right\}
\end{equation}
Now consider $0<\varepsilon<1$, small enough. Then there exists $\varepsilon_{0}>0$, such that for every $\mathcal{F}\in\mathcal{C}$, we have $B_{\varepsilon_{0}}(P^{\ast})\subset\mathcal{F}_{\varepsilon}$.
(Note that the constant $\varepsilon_{0}$ can be chosen independently of $\mathcal{F}$, but depends on $\varepsilon$.)
The string $\mathcal{R}=P^{\ast}$ belongs to $B_{\varepsilon_{0}}(P^{\ast})$ and hence by Corollary~\ref{cor:lower}, there exists an integer $t_{0}>0$, such that,
\begin{equation}
\label{thm:convrate8}
\mathbb{\mu}^{\overline{\gamma}}\left(B_{\varepsilon_{0}}(P^{\ast})\right)\geq (1-\overline{\gamma})^{\pi(\mathcal{R})}\overline{\gamma}^{t_{0}-\pi(\mathcal{R})}=\overline{\gamma}^{t_{0}}
\end{equation}
Thus for all $\mathcal{F}\in\mathcal{C}$
\begin{equation}
\label{thm:convrate9}
\mathbb{\mu}^{\overline{\gamma}}\left(\mathcal{F}_{\varepsilon})\right)\geq\mathbb{\mu}^{\overline{\gamma}}\left(B_{\varepsilon_{0}}(P^{\ast})\right)\geq\overline{\gamma}^{t_{0}}
\end{equation}
Then for $\overline{\gamma}\geq (1-\varepsilon)^{1/t_{0}}$ we have for all $\mathcal{F}$
\begin{equation}
\label{thm:convrate10}
\mathcal{F}_{\varepsilon}+\varepsilon\geq \overline{\gamma}^{t_{0}}+\varepsilon\geq 1
\end{equation}
It then follows from (\ref{thm:convrate6}) that
\begin{equation}
\label{thm:convrate11}
d_{P}\left(\mathbb{\mu}^{\overline{\gamma}},\delta_{P^{\ast}}\right)\leq\varepsilon,~~~\overline{\gamma}\geq (1-\varepsilon)^{1/t_{0}}
\end{equation}
Hence
\begin{equation}
\label{thm:convrate12}
\limsup_{\overline{\gamma}\uparrow 1}d_{P}\left(\mathbb{\mu}^{\overline{\gamma}},\delta_{P^{\ast}}\right)\leq\varepsilon
\end{equation}
Since $\varepsilon>0$ is arbitrary, by passing to the limit as $\varepsilon\rightarrow 0$, we have the weak convergence
\begin{equation}
\label{thm:convrate13}
\lim_{\overline{\gamma}\uparrow 1}d_{P}\left(\mathbb{\mu}^{\overline{\gamma}},\delta_{P^{\ast}}\right)=0
\end{equation}

For the second assertion we note that for $\varepsilon>0$, the closed set $B_{\varepsilon}^{C}(P^{\ast})$ does not contain $P^{\ast}$. Hence $\ell(B_{\varepsilon}^{C}(P^{\ast}))\geq 1$.
The claim in (\ref{thm:convrate4}) then follows from the MDP upper bound for closed sets (Lemma~\ref{MDP_upper}.)
\end{proof}

\section{Computations with the rate function}
\label{comp} A complete characterization of probabilities under the invariant distributions are obtained in Theorem~\ref{thm:mdp}, which shows that the probability of `rare events'\footnote{The term rare event in this context refers to a Borel set (event) bounded away from $P^{\ast}$.} decays as powers of $\ln(1-\overline{\gamma})$ as $\overline{\gamma}\uparrow 1$. The best exponent of this power law decay is characterized by the MDP rate function $I(\cdot)$. As Theorem~\ref{thm:mdp} shows, the best decay exponent of such a rare event can be computed as the infimum of the $I(\cdot)$ over that set. This reduces the problem of estimating probabilities under the invariant distributions to solutions of a related variational problem, namely, minimizing $I(\cdot)$ over Borel sets in $\mathbb{S}_{+}^{N}$. From a numerical analysis point of view, one may simulate the function $I(\cdot)$ and use a look-up table to numerically solve the associated variational problems. In this section, we show that, for a class of events of interest, the variational problem of computing the best decay exponent can be simplified to a great extent. In particular, we are interested in estimating probabilities of the form $\mathbb{\mu}^{\overline{\gamma}}(B_{\varepsilon}^{C}(P^{\ast}))$ for every $\varepsilon>0$. Theorem~\ref{thm:convrate} shows that such probabilities decay to zero at least as $(1-\overline{\gamma})$, i.e., for every $\varepsilon>0$, we have
\begin{equation}
\label{comp1}
\mathbb{\mu}^{\overline{\gamma}}(B_{\varepsilon}^{C}(P^{\ast}))\equiv O
(1-\overline{\gamma})
\end{equation}
However, this upper bound becomes loose as $\varepsilon$ increases and the best exponent of decay, i.e., the best power of $(1-\overline{\gamma})$ in (\ref{comp1}) can be computed precisely by solving the variational problem in Theorem~\ref{thm:mdp}. The main result of this section shows that, for a particular class of systems, this computation can be extremely simplified and for general systems we present bounds on the decay exponent that are simple to obtain in contrast to solving the full-fledged variational problem. For a system designer, the probabilities of the form in (\ref{comp1}) are of interest and a technique to obtain them efficiently is of much use.
 We start by setting some notation.

Define the function $\iota:\mathbb{R}_{+}\longmapsto\mathbb{T}_{+}$ by
\begin{equation}
\label{comp2}
\iota(M)=\inf\left\{k\in\mathbb{T}_{+}~|~\left\|f_{0}^{k}(P^{\ast})-P^{\ast}\right\|\geq M\right\}
\end{equation}
Also, define $\iota_{+}:\mathbb{R}_{+}\longmapsto\mathbb{T}_{+}$ by
\begin{equation}
\label{comp3}
\iota_{+}(M)=\inf\left\{k\in\mathbb{T}_{+}~|~\left\|f_{0}^{k}(P^{\ast})-P^{\ast}\right\|> M\right\}
\end{equation}
We note that $\iota(\cdot)$ is a non-decreasing right continuous function, and we have for all $M>0$
\begin{equation}
\label{comp4}
\iota_{+}(M)\leq\iota(M)+1,~~~\lim_{U<M:U\rightarrow M}\iota(U)=\iota_{+}(M)
\end{equation}
Also recall:
\begin{equation}
\label{comp5}
B_{M}^{C}(P^{\ast})=\{X\in\mathbb{S}_{+}^{N}~|~\left\|X-P^{\ast}\right\|\geq M\}
\end{equation}
\begin{equation}
\label{comp6}
\overline{B}_{M}^{C}(P^{\ast})=\{X\in\mathbb{S}_{+}^{N}~|~\left\|X-P^{\ast}\right\|> M\}
\end{equation}
\begin{definition}[Class $\mathcal{A}$ systems]: Let $(A,Q,C,R)$ be a system satisfying the assumption \textbf{(E.1)}. Then the system is called a class $\mathcal{A}$ system if
\begin{equation}
\label{comp7}\mathcal{S}^{-}\supset\{X\in\mathbb{S}_{+}^{N}~|~X\succeq f_{0}(P^{\ast})\}
\end{equation}
where $S^{-}$ is defined in (\ref{prop_string3:1}).
\end{definition}

We then have the following MDP asymptotics for class $\mathcal{A}$ systems (see Appendix~\ref{proof_compMDP} for a proof):
\begin{lemma}
\label{compMDP}
Let $(A,Q,C,R)$ be a class $\mathcal{A}$ system.
Then we have for all $M>0$
\begin{equation}
\label{compMDP2}
\limsup_{\overline{\gamma}\uparrow 1}-\frac{1}{\ln(1-\overline{\gamma})}\ln\mathbb{\mu}^{\overline{\gamma}}\left(B_{M}^{C}(P^{\ast})\right)\leq-\iota(M)
\end{equation}
and
\begin{equation}
\label{compMDP3}
\liminf_{\overline{\gamma}\uparrow 1}-\frac{1}{\ln(1-\overline{\gamma})}\ln\mathbb{\mu}^{\overline{\gamma}}\left(\overline{B}_{M}^{C}(P^{\ast})\right)\geq-\iota_{+}(M)
\end{equation}
\end{lemma}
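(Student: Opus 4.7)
The plan is to derive the two bounds separately by combining the abstract MDP bounds already in hand with a structural estimate specific to class $\mathcal{A}$ systems. Since $B_{M}^{C}(P^{\ast})$ is closed (weak inequality) and $\overline{B}_{M}^{C}(P^{\ast})$ is open (strict inequality), Lemma~\ref{MDP_upper} applied to the former reduces~\eqref{compMDP2} to showing $\inf_{X \in B_{M}^{C}(P^{\ast})} I_{L}(X) \geq \iota(M)$, while Lemma~\ref{inv_MDP_lower_lemma} applied to the latter reduces~\eqref{compMDP3} to showing $\inf_{X \in \overline{B}_{M}^{C}(P^{\ast})} I_{L}(X) \leq \iota_{+}(M)$. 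Throughout I use the identity $I = I_{L}$ furnished by Theorem~\ref{thm:mdp}, so both tasks become variational questions about $I$.

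The lower-bound direction is the easy one: one exhibits an explicit witness. By the very definition of $\iota_{+}$, the pure-Lyapunov string $\mathcal{R}^{\ast} = (f_{0}^{\iota_{+}(M)},P^{\ast}) \in \mathcal{S}^{P^{\ast}}$ has $\pi(\mathcal{R}^{\ast}) = \iota_{+}(M)$, and its numerical value $\mathcal{N}(\mathcal{R}^{\ast}) = f_{0}^{\iota_{+}(M)}(P^{\ast})$ lies in $\overline{B}_{M}^{C}(P^{\ast})$. Hence $I\bigl(f_{0}^{\iota_{+}(M)}(P^{\ast})\bigr) \leq \iota_{+}(M)$, which supplies the required estimate.

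For the upper-bound direction, the substantive input is the pointwise domination
\[
\mathcal{N}(\mathcal{R}) \;\preceq\; f_{0}^{\pi(\mathcal{R})}(P^{\ast}) \qquad \text{for every } \mathcal{R} \in \mathcal{S}^{P^{\ast}}.
\]
Given this, since Theorem~\ref{weakconv1} places $\mathcal{N}(\mathcal{R}) \succeq P^{\ast}$, the difference $\mathcal{N}(\mathcal{R}) - P^{\ast}$ is sandwiched between $0$ and $f_{0}^{\pi(\mathcal{R})}(P^{\ast}) - P^{\ast}$ in the PSD order. Monotonicity of the operator $2$-norm on $\mathbb{S}_{+}^{N}$ then yields $\|\mathcal{N}(\mathcal{R}) - P^{\ast}\| \leq \|f_{0}^{\pi(\mathcal{R})}(P^{\ast}) - P^{\ast}\|$. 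When $\mathcal{N}(\mathcal{R}) \in B_{M}^{C}(P^{\ast})$, this forces $\|f_{0}^{\pi(\mathcal{R})}(P^{\ast}) - P^{\ast}\| \geq M$, and the definition of $\iota$ immediately gives $\pi(\mathcal{R}) \geq \iota(M)$. Taking infima over $\mathcal{R} \in \mathcal{S}^{P^{\ast}}(X)$ and then over $X \in B_{M}^{C}(P^{\ast})$ yields $\inf_{X \in B_{M}^{C}(P^{\ast})} I(X) \geq \iota(M)$.

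The main obstacle is therefore the proof of the domination, which is precisely where the class $\mathcal{A}$ hypothesis enters. I plan an induction on $\mbox{len}(\mathcal{R})$: writing $\mathcal{R} = (f_{i_{1}},\ldots,f_{i_{t}},P^{\ast})$, let $X_{s}$ denote the partial state after the innermost $s$ operators have been applied and let $\pi_{s}$ count the $f_{0}$'s among them; the inductive claim is $X_{s} \preceq f_{0}^{\pi_{s}}(P^{\ast})$. If the next operator is $f_{0}$, monotonicity of $f_{0}$ closes the step. If the next operator is $f_{1}$, monotonicity of $f_{1}$ gives $f_{1}(X_{s}) \preceq f_{1}(f_{0}^{\pi_{s}}(P^{\ast}))$; the case $\pi_{s} = 0$ is absorbed by the Riccati fixed-point identity $f_{1}(P^{\ast}) = P^{\ast}$, while the case $\pi_{s} \geq 1$ requires $f_{1}(f_{0}^{\pi_{s}}(P^{\ast})) \preceq f_{0}^{\pi_{s}}(P^{\ast})$. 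This contraction is exactly the property encoded by membership in $\mathcal{S}^{-}$, and since $f_{0}^{\pi_{s}}(P^{\ast}) \succeq f_{0}(P^{\ast})$ the class $\mathcal{A}$ inclusion $\{X : X \succeq f_{0}(P^{\ast})\} \subset \mathcal{S}^{-}$ delivers it. This is the only place the class $\mathcal{A}$ assumption is invoked, and it is precisely the mechanism that prevents Riccati steps from driving the iterate beyond the Lyapunov envelope $f_{0}^{\pi(\mathcal{R})}(P^{\ast})$.
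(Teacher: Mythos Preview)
Your proposal is correct and follows essentially the same route as the paper's proof: reduce the two bounds via Theorem~\ref{thm:mdp} (so that $I=I_{L}$) to the variational inequalities $\inf_{X\in B_{M}^{C}(P^{\ast})}I(X)\geq\iota(M)$ and $\inf_{X\in\overline{B}_{M}^{C}(P^{\ast})}I(X)\leq\iota_{+}(M)$, supply the latter by the pure-Lyapunov witness, and derive the former from the domination $\mathcal{N}(\mathcal{R})\preceq f_{0}^{\pi(\mathcal{R})}(P^{\ast})$ together with $\mathcal{N}(\mathcal{R})\succeq P^{\ast}$ and norm monotonicity on $\mathbb{S}_{+}^{N}$. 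Your inductive proof of the domination is exactly what the paper invokes when it says the class~$\mathcal{A}$ assumption yields~\eqref{compMDP6} ``through the same line of arguments as in Proposition~\ref{string_prop} Assertion~(iii)''; the only cosmetic differences are that the paper phrases the $\geq\iota(M)$ step as a contradiction and in fact establishes the full equalities~\eqref{compMDP4}--\eqref{compMDP5}, whereas you prove just the inequalities the lemma actually requires.
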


\begin{remark}
\label{rm:compMDP}
Lemma~\ref{compMDP} shows that for class $\mathcal{A}$ systems the variational problem of computing the best decay exponent can be simplified to a great extent. In particular, rather than generating the set of all possible strings $\mathcal{S}^{P^{\ast}}$ (that grows exponentially with the length of the strings) one can systematically look into strings of the form $f_{0}^{k}(P^{\ast}),~k\in\mathbb{T}_{+}$ and obtain the decay exponent. The next natural question is whether there exists a suitable characterization of class $\mathcal{A}$ systems. Determining whether a given system is class $\mathcal{A}$ is numerically simple, as one only needs to check the condition in (\ref{comp7}). From a theoretical point of view, it would be relevant to offer a characterization of class $\mathcal{A}$ systems through properties of the system matrices. A detailed study to that end will be a digression from the main theme of the paper, and we intend to do it elsewhere. However, we show that scalar systems are included in class $\mathcal{A}$, thereby confirming that it is not empty.
\begin{proposition}
\label{scalA} Let $(A,Q,C,R)$ be a scalar system satisfying~\textbf{(E.1)} (i.e., $Q,R>0$ and $C\neq 0$.) Then, $(A,Q,C,R)$ belongs to class $\mathcal{A}$.
\end{proposition}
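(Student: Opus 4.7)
The plan hinges on two features of the scalar setting: the partial order $\succeq$ on $\mathbb{S}^{N}_{+}$ collapses to the total order on $[0,\infty)$, and both $f_{0}(x)=A^{2}x+Q$ and $f_{1}(x)$ are continuous, monotone non-decreasing scalar maps with $f_{1}\leq f_{0}$, $f_{1}(P^{\ast})=P^{\ast}$, and $f_{0}(P^{\ast})>P^{\ast}$ (the last since $Q>0$). The set $\{X\in\mathbb{S}^{N}_{+}:X\succeq f_{0}(P^{\ast})\}$ reduces to the ray $[f_{0}(P^{\ast}),\infty)$, so verifying the class $\mathcal{A}$ inclusion amounts to showing every point on this ray lies in $\mathcal{S}^{-}$, whose structural role is to capture those $X$ for which the pure $f_{0}$-orbit from $P^{\ast}$ furnishes the controlling upper bound on reachability.

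The technical core I would establish first is the scalar monotonicity lemma: $f_{1}(x)\leq x$ for every $x\geq P^{\ast}$. The argument is a short contradiction. If $f_{1}(x_{0})>x_{0}$ at some $x_{0}>P^{\ast}$, then monotonicity of $f_{1}$ yields $f_{1}^{n+1}(x_{0})\geq f_{1}^{n}(x_{0})\geq x_{0}$ for every $n$, so the orbit $\{f_{1}^{n}(x_{0})\}$ is monotone non-decreasing and stays bounded away from $P^{\ast}$. Classical scalar Riccati convergence --- under the observability and controllability enforced by assumption \textbf{(E.1)} --- forces $f_{1}^{n}(x_{0})\to P^{\ast}$, yielding a contradiction. (In scalar, the algebraic Riccati equation is a quadratic in $P$ whose product of roots is $-QR/C^{2}<0$, so $P^{\ast}$ is the unique positive fixed point.) One-dimensionality is decisive here, because the analogous pointwise bound fails for matrix systems where $\succeq$ is only partial.

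With the lemma in hand, I would then prove by induction on $\mathrm{len}(\mathcal{R})$ the dominance
\[
\mathcal{N}(\mathcal{R})\leq f_{0}^{\pi(\mathcal{R})}(P^{\ast}),\qquad \forall\,\mathcal{R}\in\mathcal{S}^{P^{\ast}}.
\]
The empty-string base case is immediate. For the induction, note that $[P^{\ast},\infty)$ is forward invariant under both $f_{0}$ and $f_{1}$, so all string values from $P^{\ast}$ remain $\geq P^{\ast}$. Prepending $f_{0}$ raises both $\pi(\mathcal{R})$ and the scalar upper bound by one further application of $f_{0}$ (by monotonicity); prepending $f_{1}$ leaves $\pi(\mathcal{R})$ unchanged and, by the scalar lemma applied at the current value in $[P^{\ast},\infty)$, does not raise $\mathcal{N}(\mathcal{R})$. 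From this dominance the class $\mathcal{A}$ inclusion follows at once: for any $X\geq f_{0}(P^{\ast})$ and any realizing string $\mathcal{R}$ with $\mathcal{N}(\mathcal{R})=X$ (whose existence is guaranteed by Theorem~\ref{weakconv1}), we have $f_{0}^{\pi(\mathcal{R})}(P^{\ast})\geq X$ and hence $\pi(\mathcal{R})\geq\iota(X-P^{\ast})$, which is exactly the defining membership property of $\mathcal{S}^{-}$ per~(\ref{prop_string3:1}); the degenerate case in which no string realizes $X$ is vacuous, since $I(X)=\infty$ already dominates $\iota(\cdot)$. The principal obstacle is the monotonicity lemma: although intuitively natural, its proof is global rather than local, is the precise point at which one-dimensionality is invoked, and is what fails for general matrix systems.
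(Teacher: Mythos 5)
Your key lemma --- $f_{1}(x)\leq x$ for every scalar $x\geq P^{\ast}$, proved by the monotone-orbit contradiction against the convergence $f_{1}^{n}(x_{0})\rightarrow P^{\ast}$ (Lemma~\ref{unif_conv}, applicable since $x_{0}\succeq P^{\ast}$) --- is correct, and it already finishes the proof: by~(\ref{prop_string3:1}), $\mathcal{S}^{-}$ is precisely $\{X~|~f_{1}(X)\preceq X\}$, so your lemma says $[P^{\ast},\infty)\subset\mathcal{S}^{-}$, and since $f_{0}(P^{\ast})>P^{\ast}$ (because $Q>0$) the ray $[f_{0}(P^{\ast}),\infty)$ lies in $\mathcal{S}^{-}$, which is exactly the class~$\mathcal{A}$ condition~(\ref{comp7}). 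This is a genuinely different route from the paper's: there one sets $g(X)=f_{1}(X)-X$, uses that $P^{\ast}$ is the unique nonnegative zero of $g$, that $g(0)>0$, and that $g(X)<0$ for all sufficiently large $X$ (via~\cite{Bucy-Riccati}), and concludes by continuity and a sign-change argument that $g<0$ on $(P^{\ast},\infty)$; you instead trade fixed-point uniqueness for order-preservation of $f_{1}$ plus global convergence of the Riccati iteration, both available under~\textbf{(E.1)}, and your remark that the trichotomy of the scalar order is what makes the contradiction work (and what fails in the matrix case) is exactly right. Your closing paragraph, however, should be deleted rather than repaired: the dominance $\mathcal{N}(\mathcal{R})\leq f_{0}^{\pi(\mathcal{R})}(P^{\ast})$ is a \emph{consequence} of the class~$\mathcal{A}$ property (it is~(\ref{compMDP6}) in the proof of Lemma~\ref{compMDP}), not a means of verifying it, and the claim that ``$\pi(\mathcal{R})\geq\iota(\cdot)$'' is ``the defining membership property of $\mathcal{S}^{-}$'' misstates~(\ref{prop_string3:1}), which involves no strings, no $\iota$, and no reachability considerations at all. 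Nothing is lost by cutting it, because the membership you need is literally the statement of your monotonicity lemma.
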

\begin{proof} Define the function $g:\mathbb{R}_{+}\longmapsto\mathbb{R}_{+}$ by
\begin{equation}
\label{scalA1} g(X)=f_{1}(X)-X,~~~\forall X\geq 0
\end{equation}
where $f_{1}$ is the scalar Riccati operator. Under the assumptions, $f_{1}$ has only one fixed point in $\mathbb{R}_{+}$, $P^{\ast}$, the steady state solution. Thus, in the domain of interest,
\begin{equation}
\label{scalA2} g(X)=0~~~\mbox{\emph{iff}}~~~X=P^{\ast}
\end{equation}
We note that $g(0)>0$, and, by~\cite{Bucy-Riccati}, there exists $\alpha_{P^{\ast}}\geq P^{\ast}$ sufficiently large, such that,
\begin{equation}
\label{scalA3}
g(X)=f_{1}(X)-X<0,~~~\forall X> \alpha_{P^{\ast}}
\end{equation}
We now claim that
\begin{equation}
\label{scalA4}
g(X)>0,~~0\leq X<P^{\ast}~~~\mbox{and}~~~g(X)<0,~~X>P^{\ast}
\end{equation}
Indeed, if this was not true, then by (\ref{scalA3}) this would imply the existence of an interval in $\mathbb{R}_{+}$ not containing $P^{\ast}$, such that the sign of $g(\cdot)$ changes over this interval. This in turn would imply from the continuity of $g(\cdot)$, the existence of another solution to the equation $g(X)=0$ on $\mathbb{R}_{+}$ other than $P^{\ast}$. Clearly, this contradicts with the hypothesis (see (\ref{scalA2}) and, hence, the claim in (\ref{scalA4}) holds.

Since $f_{0}(P^{\ast})>P^{\ast}$, it then follows from (\ref{scalA4}) that, for $X\geq f_{0}(P^{\ast})$,
\begin{equation}
\label{scalA5}
f_{1}(X)-X = g(X)<0
\end{equation}
thus showing that scalar systems belong to class $\mathcal{A}$.
\end{proof}
\end{remark}

As shown by Lemma~\ref{compMDP}, for class $\mathcal{A}$ systems the computation of the decay exponent of rare events can be greatly simplified. For general systems such a simplification may not be possible, however, the solution of the variational problem in Theorem~\ref{thm:mdp} can still be made more efficient rather than searching haphazardly over the set of strings $\mathcal{S}^{P^{\ast}}$. The following proposition outlines a simple algorithm for solving the variational problems of interest leading to the decay exponents of rare events in general systems:
\begin{proposition}
\label{compgen}
Let $\Gamma\subset\mathbb{S}_{+}^{N}$ and define
\begin{equation}
\label{compgen1}
k_{\Gamma}=\inf\{k\geq 0~|~\mathcal{N}\left(f_{0}^{k}(P^{\ast})\right)\in\Gamma\}
\end{equation}
Define the set
\begin{equation}
\label{compgen2}
\mathcal{J}_{\Gamma}=\{\mathcal{R}\in\mathcal{S}^{P^{\ast}}~|~\mathcal{N}(\mathcal{R})\in\Gamma~~\mbox{and}~~\mbox{len}(\mathcal{R})\leq k_{\Gamma}\}
\end{equation}
Then
\begin{equation}
\label{compgen3}
\inf_{X\in\Gamma}I(X)=\inf_{\mathcal{R}\in\mathcal{J}_{\Gamma}}\pi(\mathcal{R})
\end{equation}
\end{proposition}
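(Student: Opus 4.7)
The plan is to unfold the rate function $I$ at the string level, rewriting both sides of the claim as infima of $\pi(\cdot)$ over appropriate families of strings, and then to compare the two infima using the distinguished element $(f_{0}^{k_{\Gamma}}, P^{\ast})$ as a pivot. From the defining identity $I(X) = \inf_{\mathcal{R}\in\mathcal{S}^{P^{\ast}}(X)}\pi(\mathcal{R})$, one obtains
\[
\inf_{X\in\Gamma}I(X)=\inf\bigl\{\pi(\mathcal{R}):\mathcal{R}\in\mathcal{S}^{P^{\ast}},\ \mathcal{N}(\mathcal{R})\in\Gamma\bigr\}=:A,
\]
while the right-hand side is $B:=\inf_{\mathcal{R}\in\mathcal{J}_{\Gamma}}\pi(\mathcal{R})$. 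Since $\mathcal{J}_{\Gamma}\subset\{\mathcal{R}\in\mathcal{S}^{P^{\ast}}:\mathcal{N}(\mathcal{R})\in\Gamma\}$, the inequality $B\geq A$ is immediate from the fact that the infimum over a subset is no smaller.

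For the reverse direction $A\geq B$, the key witness is $(f_{0}^{k_{\Gamma}}, P^{\ast})$: by the very definition of $k_{\Gamma}$, this string has length exactly $k_{\Gamma}$, numerical value $f_{0}^{k_{\Gamma}}(P^{\ast})\in\Gamma$, and therefore belongs to $\mathcal{J}_{\Gamma}$, with $\pi$-value $k_{\Gamma}$. Hence $B\leq k_{\Gamma}$, and combined with the trivial direction, $A\leq k_{\Gamma}$ as well. When $k_{\Gamma}=\infty$, the length constraint defining $\mathcal{J}_{\Gamma}$ is vacuous and $A=B$ follows trivially. When the infimum $A$ is attained by some string of $\pi$-value $\geq k_{\Gamma}$, necessarily $A=k_{\Gamma}$ and the canonical witness $(f_{0}^{k_{\Gamma}},P^{\ast})$ closes the chain via $B\leq k_{\Gamma}=A$.

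The substantive remaining case is $A<k_{\Gamma}$, where one must exhibit a witness of $\pi$-value $\leq A$ inside $\mathcal{J}_{\Gamma}$, i.e.\ of length at most $k_{\Gamma}$. My approach is to take any $\mathcal{R}$ realizing $\pi(\mathcal{R})=A$ with $\mathcal{N}(\mathcal{R})\in\Gamma$, first trim its trailing block of $f_{1}$'s (which act on $P^{\ast}$ as the identity via $f_{1}(P^{\ast})=P^{\ast}$ and therefore preserve both $\mathcal{N}$ and $\pi$), and then verify that the resulting minimal representative has length at most $k_{\Gamma}$. The main obstacle is the subcase in which this trimmed representative still has length exceeding $k_{\Gamma}$ while carrying strictly fewer than $k_{\Gamma}$ copies of $f_{0}$: here the argument requires a careful structural reconfiguration of the interleaved $f_{0}$'s and $f_{1}$'s, exploiting the string-algebra machinery of Proposition~\ref{string_prop} together with the fixed-point identity $f_{1}(P^{\ast})=P^{\ast}$ to absorb the surplus internal $f_{1}$-blocks. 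This combinatorial length-reduction, tying the positions of the $f_{0}$'s inside the string back to the iterates defining $k_{\Gamma}$, is the principal technical hurdle of the proof.
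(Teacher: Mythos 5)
Your reductions of both sides to infima of $\pi(\cdot)$ over families of strings, the easy inequality $\inf_{\mathcal{R}\in\mathcal{J}_{\Gamma}}\pi(\mathcal{R})\geq\inf_{X\in\Gamma}I(X)$, the pivot string $(f_{0}^{k_{\Gamma}},P^{\ast})$ giving $\inf_{X\in\Gamma}I(X)\leq k_{\Gamma}$, and the trivial cases $k_{\Gamma}=\infty$ and $\inf_{X\in\Gamma}I(X)=k_{\Gamma}$ are all fine. But the one case you yourself flag as substantive, $\inf_{X\in\Gamma}I(X)<k_{\Gamma}$, is exactly where your argument stops: you assert that the surplus interior $f_{1}$-blocks of a minimizing string can be ``absorbed'' by a structural reconfiguration, but you give no such argument, and the tools you invoke do not supply one. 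The identity $f_{1}(P^{\ast})=P^{\ast}$ only permits deletion of the trailing $f_{1}$'s (those applied directly to $P^{\ast}$); removing an interior block, as in $(f_{0},f_{1}^{m},f_{0},P^{\ast})$, changes the numerical value, and since $\Gamma$ is an arbitrary subset of $\mathbb{S}_{+}^{N}$ there is no monotonicity or continuity available to guarantee that any shortened string still lands in $\Gamma$. A string of that form has $\pi(\mathcal{R})=2$ but trimmed length $m+2$, exceeding any prescribed $k_{\Gamma}$ for large $m$, so the length reduction you need cannot be carried out at this level of generality; this is a genuine gap, not a routine verification left to the reader.

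For comparison, the paper does not attempt such a reduction at all: its proof consists of the observation that $\inf_{X\in\Gamma}I(X)\leq k_{\Gamma}$, from which it concludes that strings of length at most $k_{\Gamma}$ suffice. What that observation actually delivers---since $\pi$ is integer valued, so the unrestricted infimum over strings with $\mathcal{N}(\mathcal{R})\in\Gamma$ is attained---is that one may restrict to strings with $\pi(\mathcal{R})\leq k_{\Gamma}$, i.e., a bound on the number of $f_{0}$'s rather than on $\mbox{len}(\mathcal{R})$. Your write-up makes the hidden step between these two restrictions explicit, which is to your credit, but then leaves it unproven. To close the argument you would either have to establish the length reduction for the particular sets $\Gamma$ of interest, using order and monotonicity structure as in the class-$\mathcal{A}$ analysis of Lemma~\ref{compMDP} and Proposition~\ref{string_prop}, or work with the variant of $\mathcal{J}_{\Gamma}$ defined by the constraint $\pi(\mathcal{R})\leq k_{\Gamma}$, for which your pivot argument already suffices.
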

\begin{proof} The proof is straight-forward and follows from the fact, that, $\inf_{X\in\Gamma}I(X)\leq k_{\Gamma}$
and so it suffices to look at strings of length $k_{\Gamma}$ at most.
\end{proof}
\begin{remark}
\label{rm:compgen} The conclusion of Proposition~\ref{compgen} is that, to find the decay exponent of a rare event $\Gamma$, one may compute $k_{\Gamma}$ according to (\ref{compgen1}) and then the minimizing string can be found in the set $\mathcal{J}_{\Gamma}$ as constructed above.
\end{remark}

\section{A Scalar Example}
\label{sub_scalar} We present a numerical study to demonstrate the efficiency of our approach over extensive Monte-Carlo type simulations to estimate the decay rate of rare events.

Consider a scalar system with parameters: $A=\sqrt{2},C=Q=R=1$. Solving the algebraic Riccati equation, $X=f_{1}(X)$, we obtain $P^{\ast}=1+\sqrt{2}$. By Proposition~\ref{scalA}, we note that the system is of class $\mathcal{A}$. By Lemma~\ref{compMDP}, we then have for $M>0$,
\begin{equation}
\label{sc_ex1}
-\iota_{+}(M)\liminf_{\overline{\gamma}\uparrow 1}-\frac{1}{\ln(1-\overline{\gamma})}\ln\mathbb{\mu}^{\overline{\gamma}}\left(B_{M}^{C}(P^{\ast})\right)\leq\limsup_{\overline{\gamma}\uparrow 1}-\frac{1}{\ln(1-\overline{\gamma})}\ln\mathbb{\mu}^{\overline{\gamma}}\left(B_{M}^{C}(P^{\ast})\right)\leq-\iota(M)
\end{equation}
(note, we use the alternative MDP representation,~\eqref{moddev10000}.)

Now choose $M_{1}=40-P^{\ast}$ and we estimate the decay rate of the rare event $B_{M_{1}}^{C}(P^{\ast})$ as $\overline{\gamma}\rightarrow 1$. Using the definitions and that
\begin{equation}
\label{sc_ex2000}
P^{\ast}<f_{0}^{3}(P^{\ast})<M_{1}+P^{\ast}<f_{0}^{4}(P^{\ast})
\end{equation}
we note that
\begin{equation}
\label{sc_ex2}
\iota(M_{1})=\iota_{+}(M_{1})=4
\end{equation}
By~\eqref{sc_ex1}, $B_{M_{1}}^{C}(P^{\ast})$ is an $I$-continuity set (see text following~\eqref{moddev7}), hence the limit in~\eqref{sc_ex1} exists and we have
\begin{equation}
\label{sc_ex3}
\lim_{\overline{\gamma}\uparrow 1}\frac{1}{\ln(1-\overline{\gamma})}\ln\mathbb{\mu}^{\overline{\gamma}}\left(B_{M_{1}}^{C}(P^{\ast})\right)=4
\end{equation}
Our theory then predicts, that, for $\overline{\gamma}$ close to 1,
\begin{equation}
\label{sc_ex4}
\mathbb{\mu}^{\overline{\gamma}}\left(B_{M_{1}}^{C}(P^{\ast})\right)\sim (1-\overline{\gamma})^{4}
\end{equation}

\begin{figure}[ptb]
\begin{center}
\includegraphics[height=2.5in, width=2.5in]{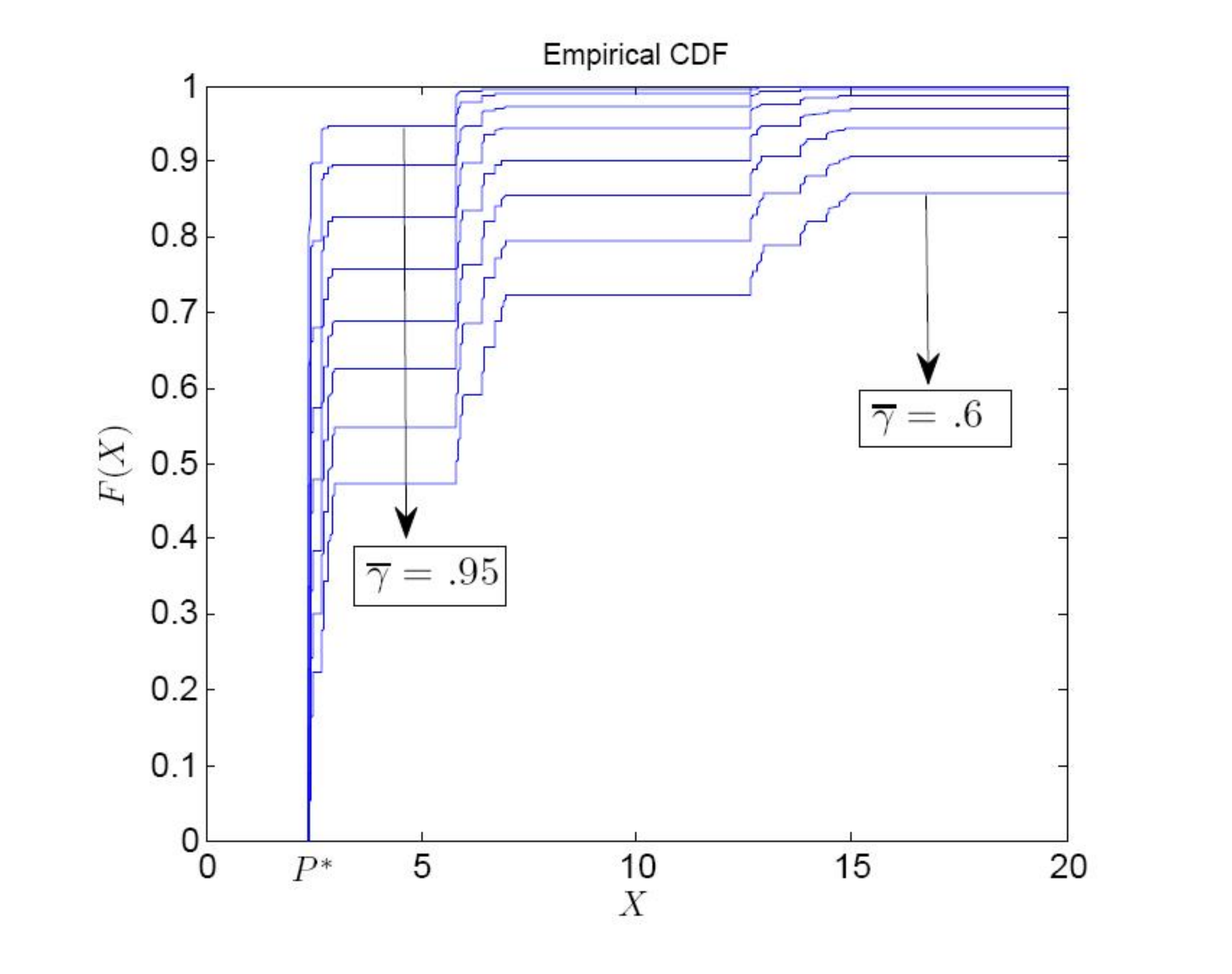}
\includegraphics[height=2.5in, width=2.5in]{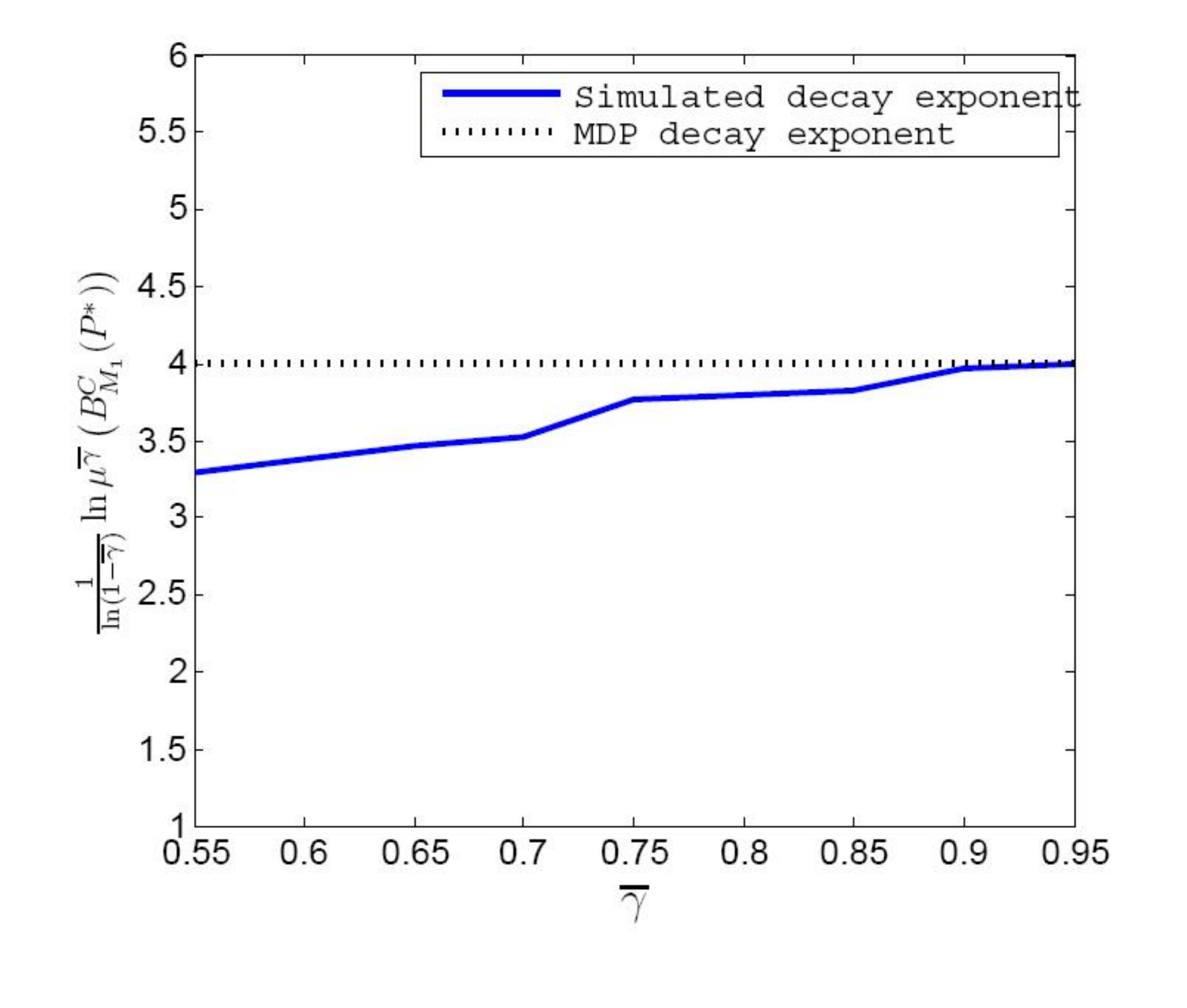}
\caption{Left: Weak convergence of (empirical) measures $\mathbb{\mu}^{\overline{\gamma}}$ to $\delta_{P^{\ast}}$ as $\overline{\gamma}\rightarrow 1$. Right: Decay exponent of probability of the rare event $B_{M_{1}}^{C}(P^{\ast})$.}
\label{fig_1}
\label{fig_M_1}
\end{center}
\end{figure}

We now estimate the empirical decay rate of the event through extensive numerical simulations. We simulate different values of $\overline{\gamma}$, in the range $[.55,1)$ with a step size of .05. For each such value of $\overline{\gamma}$, we obtained $10^{4}$ samples from the invariant measure $\mathbb{\mu}^{\overline{\gamma}}$ (this is needed as the event $\left(B_{M_{1}}^{C}(P^{\ast})\right)$ becomes increasingly difficult to observe as $\overline{\gamma}$ approaches 1.) Obtaining a sample from $\mathbb{\mu}^{\overline{\gamma}}$ is also numerically intensive, and we iterated the RRE 100 times to make sure the random covariance converged in distribution to $\mathbb{\mu}^{\overline{\gamma}}$. Thus, a total of $9\times 10^{4}\times 10^{2}$ computations were involved. The resulting empirical cumulative distributions functions (cdf) are plotted in Fig.~\ref{fig_1} (on the left). We note that, as $\overline{\gamma}\rightarrow 1$, the empirical invariant measures converge to $\delta_{P^{\ast}}$, thus verifying Theorem~\ref{thm:convrate}.

To obtain the empirical decay rate of $B_{M_{1}}^{C}(P^{\ast})$, for each $\overline{\gamma}$, we numerically estimate the quantity $\frac{\ln\mathbb{\mu}^{\overline{\gamma}}\left(B_{M_{1}}^{C}(P^{\ast})\right)}{\ln(1-\overline{\gamma})}$ from the empirical cdfs obtained above. This is plotted in Fig.~\ref{fig_M_1} (on the right) as a function of $\overline{\gamma}$ (the solid line). The result agrees with our theoretical findings: (1) qualitatively, the rare event decays as a power law of $(1-\overline{\gamma})$; (2) the best decay exponent approaches 4 (which is theoretically established in~\eqref{sc_ex3}) as $\overline{\gamma}$ gets closer to 1; (3) even for $\overline{\gamma}$ much less than 1, the empirical decay rate is close to 4, justifying~\eqref{sc_ex4}.

This example truly demonstrates the relevance of our theoretical findings. Even for the simple scalar case, a modest numerical estimation of the probabilities of rare events required computations of the order $10^{7}$, whereas, our theoretical findings provide the best decay exponent by solving a much simpler variational problem.

\section{Conclusions}
\label{conclusion} The paper studies the RRE arising from the problem of Kalman filtering with intermittent observations. We show that for every $\overline{\gamma}>0$ the conditional mean-squared error process is ergodic and the resulting family of invariant distributions $\{\mathbb{\mu}^{\overline{\gamma}}\}$ (as they converge weakly to $\delta_{P^{\ast}}$ as $\overline{\gamma}\uparrow 1$) satisfies a MDP with good rate function $I$. The rate function $I$ is completely characterized and the asymptotic decay rate of rare events characterized as solutions of deterministic variational problems. The intermediate results obtained are of independent interest and our methods are fairly general to be applicable to the analysis of more complex networked control systems (see, for example,~\cite{RDS-ACC-2010}) and hybrid or switched systems.

\section{Acknowledgements}
Soummya Kar wishes to thank Kavita Ramanan of the Department of Mathematical Sciences, Carnegie Mellon University, for insightful discussions on large deviations of Markov processes.

\appendices

\renewcommand{\baselinestretch}{1}

\section{Proofs of Proposition~\ref{string_prop}, Lemma~\ref{unif_conv}}
\label{proof_approx12}
{\small

\textbf{Proof of Proposition~\ref{string_prop}}

\begin{proof}
Assertion (i) follows from the fact, that,
$f_{1}\left(P^{\ast}\right)=P^{\ast}$, whereas Assertion (ii) is
obvious from the iterated construction of the sequence
$\{P_{t}\}_{t\in\mathbb{T}_{+}}$. We prove Assertion (iii) now.


Following Bucy (\cite{Bucy-Riccati}), we define the set
\begin{equation}
\label{prop_string3:1}
S^{-}=\left\{X\in\mathbb{S}_{+}^{N}~|~f_{1}(X)\preceq X\right\}
\end{equation}
Under the assumptions of observability and controllability, it can
be shown (see~\cite{Bucy-Riccati}), there exists
$\beta\in\mathbb{R}_{+}$, such that,
\begin{equation}
\label{prop_string3:2}
\left\{X\in\mathbb{S}_{+}^{N}~|~X\succeq\beta I\right\}\subset
S^{-}
\end{equation}
Now choose $\alpha_{P^{0}}\geq\beta$, such that,
$P^{0}\preceq\alpha_{P^{0}}I$. Then $\alpha_{P^{0}}\in S^{-}$ and
it follows from the order-preserving property of the operators
$f_{0}$ and $f_{1}$, that,
\begin{equation}
\label{prop_string3:3}
\mathcal{N}\left(\mathcal{R}\right)\preceq\mathcal{N}\left(\mathcal{R}_{1}\right)
\end{equation}
where $\mathcal{R}_{1}=\left(f_{i_{1}},\cdots,
f_{i_{t}}, \alpha_{P^{0}}I\right)$.
Note that the claim is trivial for $t=0$, so assume $t\geq 1$. Let
$j_{1}\leq\cdots\leq j_{\pi(\mathcal{R})}$ be the indices
corresponding to $f_{0}$s in $\mathcal{R}$ from left to right.
Consider the last segment of $\mathcal{R}_{1}$, i.e., the string
$\left(f_{0},
f_{1}^{t-j_{\pi(\mathcal{R})}}, \alpha_{P^{0}}I\right)$. We
then have
\begin{equation}
\label{prop_string3:7} f_{0}\circ
f_{1}^{t-j_{\pi(\mathcal{R})}}\left(\alpha_{P^{0}}I\right)\preceq f_{0}\left(\alpha_{P^{0}}I\right)
\end{equation}
This follows from the fact, that, $f_{1}\left(\alpha_{P^{0}}I\right)\preceq
\alpha_{P^{0}}I$,
which implies the sequence
$\{f_{1}^{s}\left(\alpha_{P^{0}}I\right)\}_{s\geq 0}$ is
decreasing and we have $f_{1}^{t-j_{\pi(\mathcal{R})}}\left(\alpha_{P^{0}}I\right)\preceq
\alpha_{P^{0}}I$.
Since $\alpha_{P^{0}}\geq\beta$ and $A$ is unstable, we have $f_{0}\left(\alpha_{P^{0}}I\right)\succeq\alpha_{P^{0}}I\succeq\beta
I$
and $f_{0}\left(\alpha_{P^{0}}I\right)\subset S^{-}$. In
particular, we have
\begin{equation}
\label{prop_string3:11}
\mathcal{N}\left(\mathcal{R}_{1}\right)\preceq
f_{i_{1}}\circ\cdots
f_{i_{j_{\pi(\mathcal{R})}}-1}\left(f_{0}\left(\alpha_{P^{0}}I\right)\right)
\end{equation}
using the order-preserving property of the functions
$f_{0},f_{1}$. In a similar way, we can repeat the above argument
inductively starting with the string on the R.H.S. of
(\ref{prop_string3:11}) and arrive at $\mathcal{N}\left(\mathcal{R}\right)\preceq f_{0}^{\pi(\mathcal{R})}\left(\alpha_{P^{0}}I\right)$.
The claim then follows from (\ref{prop_string3:3}.)

\end{proof}

\textbf{Proof of Lemma~\ref{unif_conv}}

\begin{proof} We use the following result on uniform convergence
over compact sets of the Riccati iterates to $P^{\ast}$. Let
$X_{1},X_{2}\in\mathbb{S}^{N}_{+}$. Then it can be shown (see
Theorem 7.5 in~\cite{Jazwinski})\footnote{The result
in~\cite{Jazwinski} applies to time-variant system matrices under
the assumptions of uniform complete observability and uniform
complete controllability, which reduces to the observability and
controllability for the time-invariant system matrices considered
in this paper.} that there exist constants $c_{1},c_{2}>0$, such
that,
\begin{equation}
\label{unif_conv3}
\left\|f_{1}^{t}(X_{1})-f_{1}^{t}(X_{2})\right\|\leq
c_{1}e^{-c_{2}t}\left\|X_{1}-X_{2}\right\|
\end{equation}
Taking $X_{1}=X$, $X_{2}=P^{\ast}$ and noting that
$f_{1}(P^{\ast})=P^{\ast}$ we have from the above
\begin{equation}
\label{unif_conv4} \left\|f_{1}^{t}(X)-P^{\ast}\right\|\leq
c_{1}e^{-c_{2}t}\left\|X_{1}-P^{\ast}\right\|
\end{equation}
Thus for every compact subset $K\in\mathbb{S}_{+}^{N}$, there
exists $\acute{t}_{\varepsilon}$, depending on $K$, such that,
\begin{equation}
\label{unif_conv5}
\left\|f_{1}^{t}(X)-P^{\ast}\right\|\leq\varepsilon,~~~X\in
K,~~t\geq \acute{t}_{\varepsilon}
\end{equation}
To prove the Lemma, we need to transform the above uniform
convergence over compact sets to uniform convergence over the
entire space $\mathbb{S}_{+}^{N}$. To this end, we note that for a
observable and controllable system, the following uniform
boundedness of Riccati iterates holds from arbitrary initial state
$X\in\mathbb{S}_{+}^{N}$ (see, for example, Lemma 7.1
in~\cite{Jazwinski}):
\begin{equation}
\label{unif_conv6} f^{t}(X)\leq \alpha_{1}I,~~~t\geq N,~~\forall
X\in\mathbb{S}^{N}_{+}
\end{equation}
where $\alpha_{1}\in\mathbb{R}_{+}$ is a sufficiently large
number.

Now consider the compact set
\begin{equation}
\label{unif_conv7}
K_{\alpha}=\left\{X\in\mathbb{S}_{+}^{N}~|~\left\|X\right\|\leq
\alpha\right\}
\end{equation}
From (\ref{unif_conv6}) we have
\begin{equation}
\label{unif_conv8} f^{t}(X)\in K_{\alpha},~~~t\geq N,~~\forall
X\in\mathbb{S}^{N}_{+}
\end{equation}
Now from (\ref{unif_conv5}) choose $\acute{t}_{\varepsilon}$,
such that,
\begin{equation}
\label{unif_conv9}
\left\|f_{1}^{t}(X)-P^{\ast}\right\|\leq\varepsilon,~~~X\in
K_{\alpha},~~t\geq \acute{t}_{\varepsilon}
\end{equation}
Then defining $t_{\varepsilon}=\acute{t}_{\varepsilon}+N$
we have from eqns.~(\ref{unif_conv8},\ref{unif_conv9})
\begin{equation}
\label{unif_conv11}
\left\|f_{1}^{t}\left(X\right)-P^{\ast}\right\|\leq\varepsilon,~~~t\geq
t_{\varepsilon}
\end{equation}
and the result follows.
\end{proof}
}

\section{Stochastic boundedness of error covariances}
\label{proof_stoch_bound_er}
{\small

\textbf{Proof of Lemma~\ref{cr_sb}}
\label{app:prooflemmacr_sb}
\begin{proof} We consider the case of unstable $A$. For stable $A$, the proposition is trivial and follows from the fact, that, the unconditional variance of the state sequence reaches a steady state (hence bounded), and a suboptimal estimate $\widehat{x}_{t}\equiv 0$ for all $t$ leads to pathwise boundedness of the corresponding error covariance. In fact, in this case even $\overline{\gamma}=0$ leads to stochastic boundedness of the sequence $\{P_{t}\}$ from every initial condition.

The proof follows the same line of arguments used in
Proposition 6 of~\cite{Riccati-weakconv}, where the above claim
was established for the case of invertible $C$. The key
ingredients used there consisted of uniformly bounding the Riccati
operator (in case $C$ is invertible) and then estimating the
probability that the random sequence
$\{P_{t}\}_{t\in\mathbb{T}_{+}}$ exceeds a particular range by
relating it to the length of the random time intervals between
packet arrivals.

In the general case, as shown below, instead of bounding the
one-step Riccati iterates, we bound $N$-step Riccati iterates for
controllable and observable systems and then repeat the arguments
in~\cite{Riccati-weakconv} with more generality.

To this end, we start with the following result on boundedness of
$N$-step Riccati iterates. It can be shown (see Lemma 7.1
in~\cite{Jazwinski}) that for controllable and observable
systems\footnote{The result in~\cite{Jazwinski} applies to
time-variant system matrices under the assumptions of uniform
complete observability and uniform complete controllability, which
reduces to the observability and controllability for the
time-invariant system matrices considered in this paper.} the
following holds:
\begin{equation}
\label{cr_sb2} f_{1}^{t}\left(X\right)\preceq\kappa I,~~~t\geq
N,~~X\in\mathbb{S}_{N}^{+}
\end{equation}
where $\alpha_{2}\in\mathbb{R}_{+}$ is sufficiently large. In
other words, the above states that an application of the Riccati
operator more than $N$ times in succession leads to a covariance
bounded above by a specific constant, irrespective of the initial
state.

Now, for $M\in\mathbb{T}_{+}$ and sufficiently large, define
\begin{equation}
\label{cr_sb3}
k(M)=\min\left\{k\in\mathbb{T}_{+}~|~\kappa_{1}\alpha^{2k}+\left\|Q\right\|\frac{\alpha^{2k}-1}{\alpha^{2}-1}\geq
M\right\}
\end{equation}
where $\alpha=\left\|A\right\|$ and $\kappa_{1}=\max\{\kappa,\left\|P_{0}\right\|\}$.
Since $A$ is unstable ($\alpha>1$), it follows that
$k(M)\rightarrow\infty$ as $N\rightarrow\infty$. To estimate the
probability
$\mathbb{P}^{\overline{\gamma},P_{0}}\left(\left\|P_{t}\right\|>M\right)$
for $t\in\mathbb{T}_{+}$, define the random time $\widetilde{t}$
by
\begin{equation}
\label{cr_sb5} \widetilde{t}=\max\left\{0<s\leq
t~|~\gamma_{s-r}=1,~~1\leq r\leq N\right\}
\end{equation}
where the maximum of an empty set is taken to be zero. Thus, if
$\widetilde{t}\neq 0$,\footnote{Note that, if not zero,
$\widetilde{t}\geq N$.} it denotes the time closest to $t$, such
that, there were $N$ successive packet arrivals in the time
interval $[\widetilde{t}-N,\widetilde{t}-1]$. Then, using the
above arguments, we have
\begin{equation}
\label{cr_sb6} \left\|P_{\widetilde{t}}\right\|\leq\kappa_{1}
\end{equation}
Indeed, if $\widetilde{t}=0$, then $P_{\widetilde{t}}=P_{0}$ and
(\ref{cr_sb6}) holds by the definition of $\kappa_{1}$. On
the contrary, if $\widetilde{t}>0$, we have by (\ref{cr_sb2})
\begin{equation}
\label{cr_sb7}
\left\|P_{\widetilde{t}}\right\|=\left\|f_{1}^{N}\left(P_{\widetilde{t}-N}\right)\right\|\leq\kappa\leq\kappa_{1}
\end{equation}
We then have
\begin{equation}
\label{cr_sb8} \left\|P_{t}\right\| =
\left\|f_{\gamma_{t}-1}\circ\cdots\circ
f_{\gamma_{\widetilde{t}}}\left(P_{\widetilde{t}}\right)\right\|\leq
\left\|f_{0}^{t-\widetilde{t}}\left(P_{\widetilde{t}}\right)\right\|\leq
\kappa_{1}\alpha^{2(t-\widetilde{t})}+\left\|Q\right\|\sum_{k=1}^{t-\widetilde{t}}\alpha^{2k}=
\kappa_{1}\alpha^{2(t-\widetilde{t})}+\left\|Q\right\|\frac{\alpha^{2(t-\widetilde{t})-1}}{\alpha^{2}-1}
\end{equation}
where we have used the fact, that, $f_{1}(X)\preceq
f_{0}(X),~~~\forall~X\in\mathbb{S}_{+}^{N}$.
It then follows from the above and (\ref{cr_sb3}), that,
\begin{equation}
\label{cr_sb10}
\mathbb{P}^{\overline{\gamma},P_{0}}\left(\left\|P_{t}\right\|>M\right)\leq\mathbb{P}_{\overline{\gamma},P_{0}}\left(t-\widetilde{t}\geq
k(M)\right)
\end{equation}
We now estimate the probability
$\mathbb{P}_{\overline{\gamma},P_{0}}\left(t-\widetilde{t}=
k\right)$.

First, consider the case $\widetilde{t}\neq 0$. On the event
$\widetilde{t}\neq 0$, it is not hard to see that the following
events are equal:
\begin{equation}
\label{cr_sb11}
\left\{t-\widetilde{t}=k\right\}=\left\{\gamma_{t-k-r}=1,~~1\leq
r\leq N\right\}\bigcap_{s=t-k+1}^{t}\left\{\gamma_{s-r}=1,~~1\leq
r\leq N\right\}^{c}
\end{equation}
It then follows by elementary manipulations and the independence
of packet arrivals
\begin{eqnarray}
\label{cr_sb12}
\mathbb{P}_{\overline{\gamma},P_{0}}\left(t-\widetilde{t}=
k\right) & = &
\mathbb{P}_{\overline{\gamma},P_{0}}\left(\left\{\gamma_{t-k-r}=1,~~1\leq
r\leq N\right\}\bigcap_{s=t-k+1}^{t}\left\{\gamma_{s-r}=1,~~1\leq
r\leq N\right\}^{c}\right)\nonumber \\ & \leq &
\mathbb{P}_{\overline{\gamma},P_{0}}\left(\left\{\gamma_{t-k-r}=1,~~1\leq
r\leq
N\right\}\bigcap_{i=1}^{\lfloor\frac{k}{N}\rfloor}\left\{\gamma_{t-k+(i-1)N-1+r}=1,~~1\leq
r\leq N\right\}^{c}\right)\nonumber \\ & = &
\mathbb{P}_{\overline{\gamma},P_{0}}\left(\left\{\gamma_{t-k-r}=1,~~1\leq
r\leq
N\right\}\right)\prod_{i=1}^{\lfloor\frac{k}{N}\rfloor}\mathbb{P}_{\overline{\gamma},P_{0}}\left(\left\{\gamma_{t-k+(i-1)N-1+r}=1,~~1\leq
r\leq N\right\}^{c}\right)\nonumber \\ & = &
\overline{\gamma}^{N}\prod_{i=1}^{\lfloor\frac{k}{N}\rfloor}\left(1-\overline{\gamma}^{N}\right)\nonumber
\\ & \leq & \left(1-\overline{\gamma}^{N}\right)^{\lfloor\frac{k}{N}\rfloor}
\end{eqnarray}
On the event $\widetilde{t}=0$, using a similar set of arguments,
we can show
\begin{equation}
\label{cr_sb13}
\mathbb{P}_{\overline{\gamma},P_{0}}\left(t-\widetilde{t}=
k\right)\leq\left(1-\overline{\gamma}^{N}\right)^{\lfloor\frac{k}{N}\rfloor}
\end{equation}
We thus have the upper bound (possibly loose, but sufficient for
our purpose)
\begin{equation}
\label{cr_sb1490}
\mathbb{P}_{\overline{\gamma},P_{0}}\left(t-\widetilde{t}\geq
k(M)\right) =
\sum_{k=k(M)}^{\infty}\mathbb{P}_{\overline{\gamma},P_{0}}\left(t-\widetilde{t}=
k\right) \leq
\sum_{k=k(M)}^{\infty}\left(1-\overline{\gamma}^{N}\right)^{\lfloor\frac{k}{N}\rfloor}\leq \sum_{k=k(M)}^{\infty}\left(1-\overline{\gamma}^{N}\right)^{\frac{k}{N}-1}
\end{equation}
Rearranging and summing the geometric series above we have
\begin{equation}
\label{cr_sb14}
\mathbb{P}_{\overline{\gamma},P_{0}}\left(t-\widetilde{t}\geq
k(M)\right)\leq
\frac{1}{1-\overline{\gamma}^{N}}\sum_{k=k(M)}^{\infty}\left[\left(1-\overline{\gamma}^{N}\right)^{1/N}\right]^{k}=
\frac{1}{1-\overline{\gamma}^{N}}\frac{\left[\left(1-\overline{\gamma}^{N}\right)^{1/N}\right]^{k(M)}}{1-\left(1-\overline{\gamma}^{N}\right)^{1/N}}
\end{equation}
From eqns.~(\ref{cr_sb10},\ref{cr_sb14}) we have for all $t$ and
sufficiently large $M$
\begin{equation}
\label{cr_sb15}
\mathbb{P}^{\overline{\gamma},P_{0}}\left(\left\|P_{t}\right\|>M\right)\leq\frac{1}{1-\overline{\gamma}^{N}}\frac{\left[\left(1-\overline{\gamma}^{N}\right)^{1/N}\right]^{k(M)}}{1-\left(1-\overline{\gamma}^{N}\right)^{1/N}}
\end{equation}
Since $\overline{\gamma}>0$ and $k(M)\rightarrow\infty$ as
$M\rightarrow\infty$, it follows from the above
\begin{equation}
\lim_{M\rightarrow\infty}\sup_{t\in\mathbb{T}_{+}}\mathbb{P}^{\overline{\gamma},P_{0}}\left(\left\|P_{t}\right\|>M\right)
= 0
\end{equation}
Thus $\{P_{t}\}_{t\in\mathbb{T}_{+}}$ is s.b. (for all initial
conditions $P_{0}$) for every $\overline{\gamma}>0$ and hence, the
Lemma follows.

\end{proof}
}

\section{Proof of Theorem~\ref{thm:fin}}
\label{proof_thm:fin}
{\small

\begin{proof}
For
$\mathbf{X}=\{X_{1},\cdots,X_{n}\}\in\bigotimes_{i=1}^{n}\mathbb{S}_{+}^{N}$,
we have
\begin{equation}
\label{thm:fin3}
\mathbb{P}^{\overline{\gamma},P_{0}}\left((P_{t_{1}},\cdots,P_{t_{n}})^{\overline{\gamma}}
=\mathbf{X}\right) =
 \mathbb{P}^{\overline{\gamma},P_{0}}\left(P_{t_{1}}=X_{1}\right)\prod_{i=1}^{n-1}\mathbb{P}^{\overline{\gamma},P_{0}}\left(P_{t_{i+1}}=X_{i+1}~|~P_{t_{i}}=X_{i}\right)
\end{equation}
which follows from the Markov property. Clearly, from the above we
have
\begin{equation}
\label{thm:fin4}
\mathbb{P}^{\overline{\gamma},P_{0}}\left((P_{t_{1}},\cdots,P_{t_{n}})^{\overline{\gamma}}
=\mathbf{X}\right) =
0,~~~\mathbf{X}\notin\mathcal{N}\left(\mathcal{S}^{P_{0}}_{t_{1},\cdots,t_{n}}\right)
\end{equation}
Also, if
$\mathbf{X}\in\mathcal{N}\left(\mathcal{S}^{P_{0}}_{t_{1},\cdots,t_{n}}\right)$,
it follows from (\ref{thm:fin3}) through simple manipulations
and the independence of the packet dropouts
\begin{equation}
\label{thm:fin5}
\mathbb{P}^{\overline{\gamma},P_{0}}\left((P_{t_{1}},\cdots,P_{t_{n}})^{\overline{\gamma}}
=\mathbf{X}\right) =
\sum_{\overline{\mathcal{R}}\in\mathcal{S}^{P_{0}}_{t_{1},\cdots,t_{n}}(\mathbf{X})}(1-\overline{\gamma})^{\pi(\overline{\mathcal{R}})}\overline{\gamma}^{t_{n}-\pi(\overline{\mathcal{R}})}
\end{equation}
By manipulating each term on the R.H.S. above, we have
\begin{eqnarray}
\label{thm:fin6} \lim_{\overline{\gamma}\uparrow
1}\frac{1}{\ln\left(1-\overline{\gamma}\right)}\ln\left(
(1-\overline{\gamma})^{\pi(\overline{\mathcal{R}})}\overline{\gamma}^{t_{n}-\pi(\overline{\mathcal{R}})}\right)
& = & \lim_{\overline{\gamma}\uparrow
1}\frac{1}{\ln\left(1-\overline{\gamma}\right)}\left[\pi(\overline{\mathcal{R}})\ln(1-\overline{\gamma})+\left(t_{n}-\pi(\overline{\mathcal{R}})\right)\ln\overline{\gamma}\right]\nonumber
\\ & = &
\pi(\overline{\mathcal{R}})+\left(t_{n}-\pi(\overline{\mathcal{R}})\right)\lim_{\overline{\gamma}\uparrow
1}\frac{\ln\overline{\gamma}}{\ln\left(1-\overline{\gamma}\right)}\nonumber
\\ & = & \pi(\overline{\mathcal{R}})
\end{eqnarray}
It then follows from
Proposition~\ref{lim_num},eqns.~(\ref{thm:fin5},\ref{thm:fin6})
that
\begin{equation}
\label{thm:fin7}
\mathbb{P}^{\overline{\gamma},P_{0}}\left((P_{t_{1}},\cdots,P_{t_{n}})^{\overline{\gamma}}
=\mathbf{X}\right) =
\min_{\overline{\mathcal{R}}\in\mathcal{S}^{P_{0}}_{t_{1},\cdots,t_{n}}(\mathbf{X})}\pi(\overline{\mathcal{R}})= \ell(\mathbf{X})
\end{equation}
(the case
$\mathbf{X}\notin\mathcal{N}\left(\mathcal{S}^{P_{0}}_{t_{1},\cdots,t_{n}}\right)$
is absorbed above by using the convention, that, the minimum of an
empty set is $\infty$.)

Now consider
$B\in\mathcal{B}\left(\bigotimes_{i=1}^{n}\mathbb{S}_{+}^{N}\right)$.
If
$B\cap\mathcal{N}\left(\mathcal{S}^{P_{0}}_{t_{1},\cdots,t_{n}}\right)=\phi$,
then the claim is obvious. Hence, assume
$B\cap\mathcal{N}\left(\mathcal{S}^{P_{0}}_{t_{1},\cdots,t_{n}}\right)\neq\phi$
(this intersection is necessarily finite) and note that
\begin{equation}
\label{thm:fin8}
\mathbb{P}^{\overline{\gamma},P_{0}}\left((P_{t_{1}},\cdots,P_{t_{n}})^{\overline{\gamma}}
\in B\right) = \sum_{\mathbf{X}\in
B\cap\mathcal{N}\left(\mathcal{S}^{P_{0}}_{t_{1},\cdots,t_{n}}\right)}\mathbb{P}^{\overline{\gamma},P_{0}}\left((P_{t_{1}},\cdots,P_{t_{n}})^{\overline{\gamma}}
=\mathbf{X}\right)
\end{equation}
It then follows by Proposition~\ref{lim_num}
\begin{eqnarray}
\label{thm:fin9} \lim_{\overline{\gamma}\uparrow
1}\frac{1}{\ln\left(1-\overline{\gamma}\right)}\ln\left(\mathbb{P}^{\overline{\gamma},P_{0}}\left((P_{t_{1}},\cdots,P_{t_{n}})^{\overline{\gamma}}\in
B\right)\right) & = & \min_{\mathbf{X}\in
B\cap\mathcal{N}\left(\mathcal{S}^{P_{0}}_{t_{1},\cdots,t_{n}}\right)}\lim_{\overline{\gamma}\uparrow
1}\frac{1}{\ln\left(1-\overline{\gamma}\right)}\ln\left(\mathbb{P}^{\overline{\gamma},P_{0}}\left((P_{t_{1}},\cdots,P_{t_{n}})^{\overline{\gamma}}
=\mathbf{X}\right)\right)\nonumber \\ & = & \min_{\mathbf{X}\in
B\cap\mathcal{N}\left(\mathcal{S}^{P_{0}}_{t_{1},\cdots,t_{n}}\right)}\ell(\mathbf{X})\nonumber
\\ & = & \inf_{\mathbf{X}\in B}I^{t_{1},\cdots,t_{n}}(\mathbf{X})
\end{eqnarray}
which establishes the Theorem.
\end{proof}
}

\section{Proofs of Results in Section~\ref{inv_MDP}}
\label{proof_inv_MDP}
{\small

\textbf{Proof of Proposition~\ref{inv_MDP_rate_prop}}

\begin{proof}That $I_{L}(\cdot)$ is lower semicontinuous follows from its definition (see, for example,~\cite{FengKurtz}.) Now for $a\in\mathbb{R}_{+}$ consider the level set $K_{a}=\{X\in\mathbb{S}_{+}^{N}~|~I_{L}(X)\leq a\}$. By lower semicontinuity we know that $K_{a}$ is closed. We now show that $K_{a}$ is bounded and hence compact. To this end, we note that for all $b\in\mathbb{R}_{+}$
\begin{equation}
\label{inv_MDP_rate_prop2}
\left\{Y\in\mathbb{S}_{+}^{N}~|~I(Y)\leq b\right\}\subset\left\{Y\in\mathbb{S}_{+}^{N}~|~Y\preceq f_{0}^{\lceil b\rceil}\left(\alpha_{P^{\ast}}I\right)\right\}
\end{equation}
for some constant $\alpha_{P^{\ast}}\in\mathbb{R}_{+}$, which can be chosen independent of $b$.
Indeed, $I(Y)\leq b$ implies that $\mathcal{S}^{P^{\ast}}(Y)$ is non-empty and
\begin{equation}
\label{inv_MDP_rate_prop3}
 I(Y)=\inf_{\mathcal{R}\in\mathcal{S}^{P^{\ast}}(Y)}\pi(\mathcal{R})\leq b
\end{equation}
Since $\pi(\cdot)$ takes on integral values only, the infimum above is attained and there exists $\mathcal{R}\in\mathcal{S}^{P^{\ast}}(Y)$ with $\pi(\mathcal{R})\leq b$. Then, from Proposition~\ref{string_prop}, there exists $\alpha_{P^{\ast}}\in\mathbb{R}_{+}$ (depending on $P^{\ast}$ only), such that
\begin{equation}
\label{inv_MDP_rate_prop4}
Y=\mathcal{N}(\mathcal{R})\preceq f_{0}^{\pi(\mathcal{R})}(\alpha_{P^{\ast}}I)\preceq f_{0}^{\lceil b\rceil}(\alpha_{P^{\ast}}I)
\end{equation}
This verifies the claim in (\ref{inv_MDP_rate_prop2}).

Now consider $\varepsilon_{1}>0$. For $X\in\mathbb{S}_{+}^{N}$, the sequence $\inf_{Y\in B_{\varepsilon}(X)}I(Y)$ is non-decreasing w.r.t. $\varepsilon$ and hence $X\in K_{a}$ implies
\begin{equation}
\label{inv_MDP_rate_prop5}
\inf_{Y\in
B_{\varepsilon_{1}}(X)}I(Y)\leq a
\end{equation}
Since $I(\cdot)$ takes on integral values the infimum is attained and there exists $Y(X)\in\mathbb{S}_{+}^{N}$, such that, $I(Y(X))\leq a$. From (\ref{inv_MDP_rate_prop2}) we then have
\begin{equation}
\label{inv_MDP_rate_prop6}
Y(X)\preceq f_{0}^{\lceil a\rceil}(\alpha_{P^{\ast}}I)~\Longrightarrow~\left\|Y(X)\right\|\leq\left\|f_{0}^{\lceil a\rceil}(\alpha_{P^{\ast}}I)\right\|
\end{equation}
Since $Y(X)\in B_{\varepsilon_{1}}(X)$ we have
\begin{equation}
\label{inv_MDP_rate_prop7}
\left\|X\right\|\leq \left\|Y(X)\right\|+\varepsilon_{1} \leq\left\|f_{0}^{\lceil a\rceil}(\alpha_{P^{\ast}}I)\right\|+\varepsilon_{1}
\end{equation}
We thus note that
\begin{equation}
\label{inv_MDP_rate_prop8}
K_{a}\subset\left\{Z\in\mathbb{S}_{+}^{N}~|~\left\|Z\right\|\leq \left\|f_{0}^{\lceil a\rceil}(\alpha_{P^{\ast}}I)\right\|+\varepsilon_{1}\right\}
\end{equation}
which verifies the boundedness of $K_{a}$. Hence the level sets $K_{a}$ are closed and bounded for all $a\in\mathbb{R}_{+}$, establishing the goodness of $I_{L}(\cdot)$.

For Assertion (ii), we note that for arbitrary $\varepsilon>0$,
\begin{equation}
\label{inv_MDP_rate_prop202}
\inf_{Y\in \overline{B_{\varepsilon}(X)}}I(Y)\leq\inf_{Y\in B_{\varepsilon}(X)}I(Y)\leq\inf_{Y\in \overline{B_{\varepsilon/2}(X)}}I(X)
\end{equation}
The assertion then follows by passing to the limit as $\varepsilon\rightarrow 0$ on each side.

For Assertion (iii), note that, in general we have for arbitrary $\varepsilon>0$, $I(X)\geq\inf_{Y\in B_{\varepsilon}(X)}I(X)$
and by passing to the limit it follows
\begin{equation}
\label{lower21}
I(X)\geq\lim_{\varepsilon\rightarrow 0}\inf_{Y\in B_{\varepsilon}(X)}I(X)=I_{L}(X)
\end{equation}
This immediately gives for any set $\Gamma\in\mathcal{B}(\mathbb{S}_{+}^{N})$
\begin{equation}
\label{lower22}
\inf_{X\in\Gamma}I(X)\geq\inf_{X\in\Gamma}I_{L}(X)
\end{equation}
For the reverse inequality when $\Gamma$ is open, consider $X\in\Gamma$. Then there exists $\varepsilon_{1}>0$ (depending on $X$) such that, for every $0<\varepsilon<\varepsilon_{1}$, the open ball $B_{\varepsilon}(X)\in\Gamma$. It then follows
\begin{equation}
\label{lower23}
\inf_{Y\in B_{\varepsilon}(X)}I(Y)\geq\inf_{Y\in\Gamma}I(Y),~~~0<\varepsilon<\varepsilon_{1}
\end{equation}
Taking the limit on both sides we have
\begin{equation}
\label{lower24}
\inf_{Y\in\Gamma}I(Y)\leq\lim_{\varepsilon\rightarrow 0}\inf_{Y\in B_{\varepsilon}(X)}I(Y)=I_{L}(X)
\end{equation}
Thus for every $X\in\Gamma$, we have
\begin{equation}
\label{lower25}
I_{L}(X)\geq \inf_{Y\in\Gamma}I(Y)
\end{equation}
Taking the infimum over $X\in \Gamma$ on the L.H.S. gives the required inequality
\begin{equation}
\label{lower26}
\inf_{X\in\Gamma}I_{L}(X)\geq \inf_{X\in\Gamma}I(X)
\end{equation}
and the result follows.

We now prove Assertion (iv). By the notion of limits involving continuous arguments, it suffices to show that for every sequence $\{\varepsilon_{n}\}_{n\in\mathbb{N}}$ with
\begin{equation}
\label{add_3}
\lim_{n\rightarrow\infty}\varepsilon_{n}=0,~~~~0<\varepsilon_{n}\leq 1~\forall n
\end{equation}
we have
\begin{equation}
\label{add_2}
\lim_{n\rightarrow\infty}\inf_{Y\in\overline{K_{\varepsilon_{n}}}}I_{L}(Y)=\inf_{Y\in K}I_{L}(Y)
\end{equation}
To this end consider such a sequence $\{\varepsilon_{n}\}$ and assume on the contrary, that (\ref{add_2}) is not satisfied. Since $K\subset\overline{K_{\varepsilon_{n}}}$, we clearly have
\begin{equation}
\label{add_5}
\lim_{n\rightarrow\infty}\inf_{Y\in\overline{K_{\varepsilon_{n}}}}I_{L}(Y)\leq\inf_{Y\in K}I_{L}(Y)
\end{equation}
Thus the hypothesis that (\ref{add_2}) is not satisfied implies
\begin{equation}
\label{add_6}
\lim_{n\rightarrow\infty}\inf_{Y\in\overline{K_{\varepsilon_{n}}}}I_{L}(Y)<\inf_{Y\in K}I_{L}(Y)
\end{equation}
We note that the sets $\overline{K_{\varepsilon_{n}}}$ are compact for all $n$ and since a lower semicontinuous function attains its minimum over a compact set, for every $n\in\mathbb{N}$, there exists $X_{n}\in\overline{K_{\varepsilon_{n}}}$, such that
\begin{equation}
\label{add_4}
\inf_{Y\in\overline{K_{\varepsilon_{n}}}}I_{L}(Y)=I_{L}(X_{n})
\end{equation}
Similarly, there exists $Y^{\ast}\in K$, such that,
\begin{equation}
\label{add_7}
\inf_{Y\in K}I_{L}(Y)=I_{L}(Y^{\ast})
\end{equation}
We note that $\{I_{L}(X_{n})\}$ is a non decreasing sequence (hence the limit exists) and
\begin{equation}
\label{add_9}
\lim_{n\rightarrow\infty}I_{L}(X_{n})<I_{L}(Y^{\ast})
\end{equation}
Also, given $Z\in\mathbb{S}_{+}^{N}$, it follows from the continuity of the metric, that the function $d^{Z}:\mathbb{S}_{+}^{N}\longmapsto\mathbb{R}_{+}$ given by
\begin{equation}
\label{add_8}
d^{Z}(Y)=\left\|Y-Z\right\|,~~~\forall Y\in\mathbb{S}_{+}^{N}
\end{equation}
is continuous and hence attains its minimum over a compact set. Thus for every $n\in\mathbb{N}$, there exists $Y_{n}\in K$, such that
\begin{equation}
\label{add_10}
\inf_{Y\in K}\left\|X_{n}-Y\right\|=\left\|X_{n}-Y_{n}\right\|\leq\varepsilon_{n}
\end{equation}
where the last inequality follows from the fact that $X_{n}\in\overline{K_{\varepsilon_{n}}}$.

We note that the sequence $\{Y_{n}\}$ belongs to the compact set $K$ and hence there exists a subsequence $\{Y_{n_{k}}\}_{k\in\mathbb{N}}$ in $K$, which converges to some $\acute{Y}\in K$, i.e.,
\begin{equation}
\label{add_11}
\lim_{k\rightarrow\infty}Y_{n_{k}}=\acute{Y}
\end{equation}
Now consider the sequence $\{X_{n_{k}}\}_{k\in\mathbb{N}}$. We then have
\begin{equation}
\label{add_12}
\left\|X_{n_{k}}-\acute{Y}\right\|\leq\left\|X_{n_{k}}-Y_{n_{k}}\right\|+\left\|Y_{n_{k}}-\acute{Y}\right\|\leq\varepsilon_{n_{k}}+\left\|Y_{n_{k}}-\acute{Y}\right\|
\end{equation}
Taking the limit as $k\rightarrow\infty$ we obtain
\begin{equation}
\label{add_13}
\lim_{k\rightarrow\infty}X_{n_{k}}=\acute{Y}
\end{equation}
We then have from the lower semicontinuity of $I_{L}(\cdot)$ and (\ref{add_9})
\begin{equation}
\label{add_14}
I_{L}(\acute{Y})\leq\liminf_{k\rightarrow\infty}I_{L}(X_{n_{k}})<I_{L}(Y^{\ast})
\end{equation}
This contradicts the fact that $Y^{\ast}$ is the minimizer of $I_{L}(\cdot)$ over $K$ and we conclude that (\ref{add_2}) holds. Hence the result follows.

\end{proof}

\textbf{Proof of Lemma~\ref{top_string}}

\begin{proof}The case $\ell(\mathcal{F})=0$ is trivial as the assertion follows by choosing an arbitrary positive $t_{\mathcal{F}}$.

We consider the case $\ell(\mathcal{F})\geq 1$. We use an inductive argument and it suffices to show that for every $1\leq i\leq \ell(\mathcal{F})$ there exists positive $t_{\mathcal{F}}^{i}\in\mathbb{T}_{+}$, such that, for $\mathcal{R}\in\mathcal{U}(\mathcal{F})$ with $\mbox{len}(\mathcal{R})\geq t_{\mathcal{F}}^{i}$, we have
\begin{equation}
\label{top_string10}
\pi\left(\mathcal{R}^{t_{\mathcal{F}}^{i}}\right)\geq i
\end{equation}
We start with $i=1$. Assume on the contrary, that there is no such $t_{\mathcal{F}}^{1}\in\mathbb{T}_{+}$ with the above postulated properties. Since $\mathcal{U}(\mathcal{F})$ is non-empty, by Proposition~\ref{string_prop} Assertion (i), there exists $t_{0}\in\mathbb{T}_{+}$, such that
\begin{equation}
\label{top_string11}
\mathcal{S}_{t}^{P^{\ast}}\cap\mathcal{U}(\mathcal{F})\neq \phi
\end{equation}
Thus the non-existence of $t_{\mathcal{F}}^{1}$, implies, that for every $t\geq t_{0}$, there exists a string $\mathcal{R}_{t}\in\mathcal{U}(\mathcal{F})$, with $\mbox{len}(\mathcal{R}_{t})\geq t$, such that, $\pi\left(\mathcal{R}_{t}^{t}\right)=0$.
Such a string $\mathcal{R}_{t}$ is then necessarily of the form:
\begin{equation}
\label{top_string13}
\mathcal{R}=\left(f_{1}^{t}, f_{i_{1}}, f_{i_{2}},\cdots, f_{i_{\mbox{len}(\mathcal{R}_{t})-t}}, P^{\ast}\right)
\end{equation}
where $i_{1},\cdots,i_{\mbox{len}(\mathcal{R}_{t})-t}\in\{0,1\}$. Thus denoting
\begin{equation}
\label{top_string14}
X_{t}=f_{i_{1}}\circ f_{i_{2}}\circ\cdots\circ f_{i_{\mbox{len}(\mathcal{R}_{t})-t}}(P^{\ast})
\end{equation}
we note that
\begin{equation}
\label{top_string15}
\mathcal{N}\left(\mathcal{R}_{t}\right)=f_{1}^{t}(X_{t})
\end{equation}

Now consider the sequence $\left\{\mathcal{R}_{t}\right\}_{t\geq t_{0}}$ of such strings as $t\rightarrow\infty$. Let $\varepsilon>0$ be an arbitrary positive number. By Proposition~\ref{unif_conv}, the uniform convergence of Riccati iterates implies there exists $t_{\varepsilon}\geq N$, such that, for every $X\in\mathbb{S}_{+}^{N}$,
\begin{equation}
\label{top_string16}
\left\|f_{1}^{t}(X)-P^{\ast}\right\|\leq \varepsilon,~~~t\geq t_{\varepsilon}
\end{equation}
(we emphasize that the constant $t_{\varepsilon}$ can be chosen independently of $X$.)
Then defining $\acute{t}_{\varepsilon}=\max(t_{0},t_{\varepsilon})$, it follows from (\ref{top_string15}) that for $t\geq \acute{t}_{\varepsilon}$
\begin{equation}
\label{top_string17}
\left\|\mathcal{N}\left(\mathcal{R}_{t}\right)-P^{\ast}\right\|=\left\|f_{1}^{t}(X_{t})-P^{\ast}\right\|\leq\varepsilon
\end{equation}
Since $\varepsilon>0$ above was arbitrary, it follows that the sequence $\left\{\mathcal{N}\left(\mathcal{R}_{t}\right)\right\}_{t\geq t_{0}}$ of numerical values converges to $P^{\ast}$ as $t\rightarrow\infty$. However, by construction, the sequence $\left\{\mathcal{N}\left(\mathcal{R}_{t}\right)\right\}_{t\geq t_{0}}$ belongs to the set $\mathcal{F}$ and we conclude that $P^{\ast}$ is a limit point of the set $\mathcal{F}$. Since $\mathcal{F}$ is closed, we have $P^{\ast}\in\mathcal{F}$. It then follows
\begin{equation}
\label{top_string18}
\left\{\mathcal{R}\in\mathcal{S}^{P^{\ast}}~|~\mathcal{N}(\mathcal{R})=P^{\ast}\right\}\subset\mathcal{U}(\mathcal{F})
\end{equation}
Hence, in particular, the string $f_{1}(P^{\ast})\in\mathcal{U}(\mathcal{F})$. The fact, that $\pi\left(f_{1}(P^{\ast})\right)=0$ then contradicts the hypothesis $\ell(\mathcal{F})\geq 1$.

Thus by contradiction we establish that if $\ell(\mathcal{F})\geq 1$, there exists $t_{\mathcal{F}}^{1}$ satisfying the properties in (\ref{top_string10}) for $i=1$. Note that, if $\ell(\mathcal{F})=1$, this step completes the proof of the Lemma. In the general case, i.e., to establish (\ref{top_string10}) for all $1\leq i\leq\ell(\mathcal{F})$ we need the following additional steps.

We can now assume $\ell(\mathcal{F})\geq 2$. We assume on the contrary that the claim in (\ref{top_string10}) does not hold for all $1\leq i\leq\ell(\mathcal{F})$. By the previous arguments, the claim clearly holds for $i=1$. Then, let $1\leq k< \ell(\mathcal{F})$ be the largest integer such that the claim in (\ref{top_string10}) holds for all $1\leq i\leq k$. The hypothesis $k<\ell(\mathcal{F})$ implies there exists no $t_{\mathcal{F}}^{k+1}\in\mathbb{T}_{+}$ satisfying the claim in (\ref{top_string10}) for $i=k+1$. Since the claim holds for $i=k$, there exists $t_{\mathcal{F}}^{k}\in\mathbb{T}_{+}$, such that for all $\mathcal{R}\in\mathcal{U}(\mathcal{F})$ with $\mbox{len}(\mathcal{R})\geq t_{\mathcal{F}}^{k}$, we have $\pi\left(\mathcal{R}^{t_{\mathcal{F}}^{k}}\right)\geq k$.
The non-existence of $t_{\mathcal{F}}^{k+1}$ and (\ref{top_string11}) implies, that for every $t\geq t_{0}$, there exists a string $\mathcal{R}_{t}\in\mathcal{U}(\mathcal{F})$, with $\mbox{len}(\mathcal{R}_{t})\geq t$, such that,
\begin{equation}
\label{top_string20}
\pi\left(\mathcal{R}_{t}^{t}\right)<k+1
\end{equation}
We now study the structure of the strings $\mathcal{R}_{t}$ for sufficiently large $t$. To this end, define $\acute{t}_{0}=\max(t_{0},t_{\mathcal{F}}^{k})$. Then by the existence of $t_{\mathcal{F}}^{k}$ and (\ref{top_string20}) it follows that $\pi\left(\mathcal{R}_{t}^{t}\right)=k$ for $t\geq\acute{t}_{0}$.
Hence for $t\geq\acute{t}_{0}$, $\mathcal{R}_{t}$ is necessarily of the form:
\begin{equation}
\label{top_string22}
\mathcal{R}_{t}=\left(f_{i_{1}},\cdots, f_{i_{t_{\mathcal{F}}^{k}}}, f_{1}^{t-t_{\mathcal{F}}^{k}}, f_{j_{1}}, f_{j_{2}},\cdots, f_{j_{\mbox{len}(\mathcal{R}_{t})-t}}, P^{\ast}\right)
\end{equation}
where $i_{1},\cdots,i_{t_{\mathcal{F}}^{k}}\in\{0,1\}$, such that, $\pi\left(\mathcal{R}^{t_{\mathcal{F}}^{k}}\right)=k$ and $j_{1},\cdots,j_{\mbox{len}(\mathcal{R}_{t})-t}\in\{0,1\}$.

Now consider the sequence $\left\{\mathcal{R}_{t}\right\}_{t\geq\acute{t}_{0}}$ and define the set $\mathcal{J}$ by $\mathcal{J}=\{\mathcal{R}_{t},~~t\geq\acute{t}_{0}\}$.
Also, define $\mathcal{J}_{1}=\{\mathcal{R}\in\mathcal{S}_{t_{\mathcal{F}}^{k}}^{P^{\ast}}~|~\pi(\mathcal{R})=k\}$.
Consider the function $\Lambda^{t_{\mathcal{F}}^{k}}:\mathcal{J}\longmapsto\mathcal{J}_{1}$ by
\begin{equation}
\label{top_string25}
\Lambda^{t_{\mathcal{F}}^{k}}(\mathcal{R})=\mathcal{R}^{t_{\mathcal{F}}^{k}},~~~\forall~\mathcal{R}\in\mathcal{J}
\end{equation}
Since the cardinality of $\mathcal{J}_{1}$ is finite and $\mathcal{J}$ is countably infinite, there exists $\mathcal{R}^{\ast}\in\mathcal{J}_{1}$, such that,
the set $\left(\Lambda^{t_{\mathcal{F}}^{k}}\right)^{-1}(\{\mathcal{R}^{\ast}\})$ is countably infinite. This in turn implies, that we can extract a subsequence $\left\{\mathcal{R}_{t_{m}}\right\}_{m\geq 0}$ from the sequence $\left\{\mathcal{R}_{t}\right\}_{t\geq\acute{t}_{0}}$, such that,
\begin{equation}
\label{top_string26}
\mathcal{R}_{t_{m}}^{t_{\mathcal{F}}^{k}}=\mathcal{R}^{\ast},~~~\forall m\geq 0
\end{equation}
In other words, if $\mathcal{R}^{\ast}$ is represented by $\mathcal{R}^{\ast}=\left(f_{i^{\ast}_{1}},\cdots, f_{i^{\ast}_{t_{\mathcal{F}}^{k}}}, P^{\ast}\right)$
for some fixed $i^{\ast}_{1},\cdots,i^{\ast}_{t_{\mathcal{F}}^{k}}\in\{0,1\}$, for every $m$, the string $\mathcal{R}_{t_{m}}$ is of the form:
\begin{equation}
\label{top_string28}
\mathcal{R}_{t_{m}}=\left(f_{i^{\ast}_{1}},\cdots, f_{i^{\ast}_{t_{\mathcal{F}}^{k}}}, f_{1}^{t_{m}-t_{\mathcal{F}}^{k}}, f_{j_{1}}, f_{j_{2}},\cdots, f_{j_{\mbox{len}(\mathcal{R}_{t_{m}})-t}}, P^{\ast}\right)
\end{equation}
where $j_{1},\cdots,j_{\mbox{len}(\mathcal{R}_{t_{m}})-t}\in\{0,1\}$ are arbitrary.
Denoting
\begin{equation}
\label{top_string29}
X_{m}=f_{j_{1}}\circ f_{j_{2}}\circ\cdots\circ f_{j_{\mbox{len}(\mathcal{R}_{t_{m}})-t}}(P^{\ast}),~~~\forall m
\end{equation}
we have
\begin{equation}
\label{top_string30}
\mathcal{N}(\mathcal{R}_{t_{m}})=f_{i^{\ast}_{1}}\circ\cdots\circ f_{i^{\ast}_{t_{\mathcal{F}}^{k}}}\left(f_{1}^{t_{m}-t_{\mathcal{F}}^{k}}(X_{m})\right)
\end{equation}
Since $t_{m}\rightarrow\infty$ as $m\rightarrow\infty$, using Proposition~\ref{unif_conv} in a similar way, we have
\begin{equation}
\label{top_string31}
\lim_{m\rightarrow\infty}f_{1}^{t_{m}-t_{\mathcal{F}}^{k}}(X_{m})=P^{\ast}
\end{equation}
Noting that the function $f_{i^{\ast}_{1}}\circ\cdots\circ f_{i^{\ast}_{t_{\mathcal{F}}^{k}}}:\mathbb{S}_{+}^{N}\longmapsto\mathbb{S}_{+}^{N}$ is continuous (being the composition of continuous functions), we then have
\begin{eqnarray}
\label{top_string32}
\lim_{m\rightarrow\infty}\mathcal{N}(\mathcal{R}_{t_{m}}) & = & \lim_{m\rightarrow\infty}f_{i^{\ast}_{1}}\circ\cdots\circ f_{i^{\ast}_{t_{\mathcal{F}}^{k}}}\left(f_{1}^{t_{m}-t_{\mathcal{F}}^{k}}(X_{m})\right)\nonumber \\ & = & f_{i^{\ast}_{1}}\circ\cdots\circ f_{i^{\ast}_{t_{\mathcal{F}}^{k}}}\left(\lim_{m\rightarrow\infty}f_{1}^{t_{m}-t_{\mathcal{F}}^{k}}(X_{m})\right)\nonumber \\ & = & f_{i^{\ast}_{1}}\circ\cdots\circ f_{i^{\ast}_{t_{\mathcal{F}}^{k}}}\left(P^{\ast}\right)\nonumber \\ & = & \mathcal{N}\left(\mathcal{R}^{\ast}\right)
\end{eqnarray}
Thus the sequence $\left\{\mathcal{N}\left(\mathcal{R}_{t_{m}}\right)\right\}_{m\geq 0}$ in $\mathcal{F}$ converges to $\mathcal{N}\left(\mathcal{R}^{\ast}\right)$ as $m\rightarrow\infty$. Hence, $\mathcal{N}\left(\mathcal{R}^{\ast}\right)\in\mathcal{F}$ as $\mathcal{F}$ is closed and $\mathcal{N}\left(\mathcal{R}^{\ast}\right)$ is a limit point of $\mathcal{F}$. This in turn implies $\mathcal{R}^{\ast}\in\mathcal{U}(\mathcal{F})$. Since $\pi(\mathcal{R}^{\ast})=k$, this contradicts the hypothesis $k<\ell(\mathcal{F})$ and the claim in (\ref{top_string10}) holds for all $1\leq i\leq \ell(\mathcal{F})$. This establishes the Lemma.

\end{proof}

\textbf{Proof of Lemma~\ref{tight}}

\begin{proof}Let $a>0$ be arbitrary and $z\in\mathbb{N}$ be such that $z\geq a$. From Proposition~\ref{string_prop} Assertion (iii), there exists $\alpha_{P^{\ast}}\in\mathbb{R}_{+}$, such that,
\begin{equation}
\label{tight2}
\mathcal{N}(\mathcal{R})\preceq f_{0}^{\pi(\mathcal{R})}(P^{\ast}),~~~\forall \mathcal{R}\in\mathcal{S}^{P^{\ast}}
\end{equation}
Define $b\in\mathbb{R}_{+}$ such that $\left\|f_{0}^{z}(P^{\ast})\right\|<b$
and consider the compact set $K_{a}=\left\{X\in\mathbb{S}_{+}^{N}~|~\left\|X\right\|\leq b\right\}$.
Also define the closed set, $\mathcal{F}_{b}$, by $\mathcal{F}_{b}=\left\{X\in\mathbb{S}_{+}^{N}~|~\left\|X\right\|\geq b\right\}$.
As per Lemma~\ref{top_string}, define the set $\mathcal{U}(\mathcal{F}_{b})$ as
\begin{equation}
\label{tight6}
\mathcal{U}(\mathcal{F}_{b})=\left\{\mathcal{R}\in\mathcal{S}^{P^{\ast}}~|~\mathcal{N}(\mathcal{R})\in\mathcal{F}_{b}\right\}
\end{equation}
We then have the following inclusion:
\begin{equation}
\label{tight6-b}
\mathcal{U}(\mathcal{F}_{b})\subset\left\{\mathcal{R}\in\mathcal{S}^{P^{\ast}}~|~\pi(\mathcal{R})\geq z\right\}
\end{equation}
and hence
\begin{equation}
\label{tight7}
\ell(\mathcal{F}_{b})=\inf_{\mathcal{R}\in\mathcal{U}(\mathcal{F}_{b})}\pi(\mathcal{R})\geq z
\end{equation}
Since $\mathcal{F}_{b}$ is closed, by Lemma~\ref{top_string} there exists $t_{\mathcal{F}_{b}}\in\mathbb{T}_{+}$, such that,
\begin{equation}
\label{tight8}
\pi(\mathcal{R}^{t_{\mathcal{F}_{b}}})\geq z,~~~\forall \mathcal{R}\in\mathcal{U}(\mathcal{F}_{b})
\end{equation}
To estimate the probability $\mathbb{\mu}^{\overline{\gamma}}(K_{a}^{C})$, we now follow a similar set of arguments as used in Lemma~\ref{upper}. First note that we have by weak convergence:
\begin{equation}
\label{tight9}
\mathbb{\mu}^{\overline{\gamma}}(K_{a}^{C})\leq\liminf_{t\rightarrow\infty}\mathbb{P}^{\overline{\gamma},P^{\ast}}\left(P_{t}^{\overline{\gamma},P^{\ast}}\in K_{a}^{C}\right)\leq\liminf_{t\rightarrow\infty}\mathbb{P}^{\overline{\gamma},P^{\ast}}\left(P_{t}^{\overline{\gamma},P^{\ast}}\in \mathcal{F}_{b}\right)
\end{equation}
For $t\in\mathbb{T}_{+}$ define the sets $\mathcal{J}_{t}^{P^{\ast}}=\mathcal{S}_{t}^{P^{\ast}}\cap\mathcal{U}(\mathcal{F}_{b})$.
For $t\geq t_{\mathcal{F}_{b}}$ we have (see also (\ref{upper14}))
\begin{equation}
\label{tight10}
\mathbb{P}^{\overline{\gamma},P^{\ast}}\left(P_{t}^{\overline{\gamma},P^{\ast}}\in \mathcal{F}_{b}\right)=\sum_{\mathcal{R}\in\mathcal{J}_{t}^{P^{\ast}}}(1-\overline{\gamma})^{\pi(\mathcal{R})}\overline{\gamma}^{t-\pi(\mathcal{R})}\leq {t_{\mathcal{F}_{b}}\choose\ell(\mathcal{F}_{b})}(1-\overline{\gamma})^{\ell(\mathcal{F}_{b})}\leq {t_{\mathcal{F}_{b}}\choose\ell(\mathcal{F}_{b})}(1-\overline{\gamma})^{z}
\end{equation}
A familiar set of arguments as in Lemma~\ref{upper} yields the following from eqns.~(\ref{tight9},\ref{tight10}):
\begin{equation}
\label{tight11}
\mathbb{\mu}^{\overline{\gamma}}(K_{a}^{C})\leq {t_{\mathcal{F}_{b}}\choose\ell(\mathcal{F}_{b})}(1-\overline{\gamma})^{z},~~~\forall \overline{\gamma}
\end{equation}
from which we obtain
\begin{equation}
\label{tight12}
\limsup_{\overline{\gamma}\uparrow 1}-\frac{1}{\ln(1-\overline{\gamma})}\mathbb{\mu}^{\overline{\gamma}}(K_{a}^{C})\leq -z\leq -a
\end{equation}
Thus for every $a>0$ there exists a compact set $K_{a}$ such that (\ref{tight1}) is satisfied and the Lemma follows.
\end{proof}
}

\section{Proof of Lemma~\ref{compMDP}}
\label{proof_compMDP}
{\small

\begin{proof}
By Theorem~\ref{thm:mdp} it suffices to show that for all $M>0$,
\begin{equation}
\label{compMDP4}
\inf_{X\in B_{M}^{C}(P^{\ast})}I(X)=\iota(M)
\end{equation}
\begin{equation}
\label{compMDP5}
\inf_{X\in \overline{B}_{M}^{C}(P^{\ast})}I(X)=\iota_{+}(M)
\end{equation}
We prove (\ref{compMDP4}) only, the proof of (\ref{compMDP5}) being similar.

The class $\mathcal{A}$ assumption implies (through the same line of arguments as in Proposition~\ref{string_prop} Assertion (iii)) that
\begin{equation}
\label{compMDP6}
\mathcal{N}(\mathcal{R})\preceq f_{0}^{\pi(\mathcal{R})}(P^{\ast}),~~~\forall~\mathcal{R}\in\mathcal{S}^{P^{\ast}}
\end{equation}
By definition it follows that $f_{0}^{\iota(M)}(P^{\ast})\in B_{M}^{C}(P^{\ast})$
and hence
\begin{equation}
\label{compMDP8}
\inf_{X\in B_{M}^{C}(P^{\ast})}I(X)\leq \iota(M)
\end{equation}
Now assume on the contrary that (\ref{compMDP4}) does not hold. Then by (\ref{compMDP8}) we have
\begin{equation}
\label{compMDP9}
\inf_{X\in B_{M}^{C}(P^{\ast})}I(X)< \iota(M)
\end{equation}
Thus there exists $\mathcal{R}\in\mathcal{S}^{P^{\ast}}$, such that,
\begin{equation}
\label{compMDP10}
\mathcal{N}\left(\mathcal{R}\right)\in B_{M}^{C}(P^{\ast}),~~~\pi(\mathcal{R})\leq\iota(M)-1
\end{equation}
We then have from (\ref{compMDP6})
\begin{equation}
\label{compMDP11}
\mathcal{N}(\mathcal{R})\leq f_{0}^{\pi(\mathcal{R})}(P^{\ast})\leq f_{0}^{\iota(M)-1}(P^{\ast})
\end{equation}
which implies
\begin{equation}
\label{compMDP12}
\left\|f_{0}^{\iota(M)-1}(P^{\ast})-P^{\ast}\right\|\geq\left\|\mathcal{N}(\mathcal{R})-P^{\ast}\right\|\geq M
\end{equation}
This contradicts the definition of $\iota(M)$ which is the smallest non-negative integer $k$, such that, $\left\|f_{0}^{k}(P^{\ast})-P^{\ast}\right\|\geq M$.
We thus conclude that the claim in (\ref{compMDP4}) holds.
\end{proof}

\bibliographystyle{IEEEtran}
\bibliography{IEEEabrv,CentralBib}

\end{document}